\newif\ifpdf
\begin{document}

\ifpdf
	\DeclareGraphicsExtensions{.pdf,.jpg,.png}
\else
	\DeclareGraphicsExtensions{.eps}
\fi

\pagestyle{headings}


\title{Strictly Singular Uniform $\lambda-$Adjustment in Banach Spaces}
\author{Boris Burshteyn}
\date{February, 2009} 
\maketitle
\begin{abstract}
Based on the recently introduced uniform $\lambda-$adjustment for closed subspaces of Banach spaces we extend the concept of the strictly singular and finitely strictly singular operators to the sequences of closed subspaces and operators in Banach spaces and prove theorems about lower semi--Fredholm stability. We also state some new open questions related to strict singularity and the geometry of Banach spaces.  
\end{abstract}



\pagestyle{headings} 


\section[Introduction]{Introduction}\label{S:intro}
In the current paper we extend uniform $\lambda-$adjustment introduced recently in the author's work \cite{burshteyn} in order to encompass strictly singular and finitely strictly singular operators as well as strictly singular pairs of subspaces. 

In \cite{burshteyn} we defined the concept of \emph{uniform $\lambda-$adjustment} between sequences of subspaces of a Banach space that generalizes many of the previously known perturbations of closed subspaces and closed operators including perturbations by small gap, operator norm, $q-$norm, and $K_{2}-$approximation. It had been proved that perturbations with small $\lambda-$adjustment numbers preserve (semi--)Fredholm properties of linear operators as well as of pairs, tuples and complexes of closed subspaces. 

Strictly singular operators were first introduced by T. Kato in \cite{kato0}, strictly cosingular operators were first introduced by A. Pe\l czy\'nski in \cite{pelczynski} -- these concepts generalize compact operators and preserve (semi-)Fredholm properties of linear operators (see P. Aiena \cite{aiena} for an overview). Later M. Gonzales in \cite{gonzales} extended these concepts to the pairs of closed subspaces. A proper subclass of strictly singular operators -- finitely strictly singular operators -- had been defined and researched in V. D. Milman \cite{milman} and B. Sari, Th. Schlumprecht, N. Tomczak-Jaegermann, V. G. Troitsky \cite{Sari_Schlumprecht_Tomczak_Jaegermann_Troitsky}. 

The goal of the present work is to combine the ideas and results of strictly singular theory with uniform $\lambda-$adjustment. First, after recalling the concepts of uniform $\lambda-$adjustment and strict singularity, we define strictly singular and finitely strictly singular uniform $\lambda-$adjustment between sequences of closed subspaces. After that, we establish that the new concepts encompass strictly singular and finitely strictly singular linear operators as well as strictly singular pairs of subspaces, and that they are weaker than the previously considered uniform $\lambda-$adjustment. Then we prove the lower semi--Fredholm stability for sequences of pairs of subspaces, as well as for sequences of operators. In conclusion we discuss relaxed strict singularity, as well as a  relation of strict singularity to the geometry of Banach spaces.

\section{Acknowledgements}
The author is grateful to M.I. Ostrovskii for the provided reference to the work of A. Pe\l czy\'nski \cite{pelczynski2} related to the geometry of Banach spaces.

\subsection[Notational Conventions]{Notational Conventions}\label{S:conv}
$\mathbb{N}$ is a set of natural numbers, $\mathbb{N}^{'}$ is an infinite subset of $\mathbb{N}$, $\mathbb{N}^{''}$ is an infinite subset of $\mathbb{N}^{'}$, etc. When the number of nested subsets becomes high, we denote the subset at depth $n$ as $\mathbb{N}^{'n}$
\[
\mathbb{N}\ \supset\ \mathbb{N}^{'}\ \supset\ \mathbb{N}^{''}\ \supset\ \mathbb{N}^{'''}\ \supset\  \mathbb{N}^{'4}\ \supset\ \mathbb{N}^{'5}\ \supset\ ...
\]
A sequence of elements enumerated by elements ${n \in \mathbb{N}^{'}}$ is denoted by $(e_{n})_{\mathbb{N}^{'}}$.

If $(M_{n})_{\mathbb{N}^{'}}$ is a sequence of non-empty sets and $(x_{n})_{\mathbb{N}^{'}}$ is a sequence of elements such that $x_{n} \in M_{n}$ for $n \in \mathbb{N}^{'}$, then we say that $(x_{n})_{\mathbb{N}^{'}}\ is from\ (M_{n})_{\mathbb{N}^{'}}$ and write $(x_{n})_{\mathbb{N}^{'}} \triangleleft (M_{n})_{\mathbb{N}^{'}}$. 

If $(K_{n})_{\mathbb{N}^{'}}$ is a sequence of subsets such that $K_{n} \subset M_{n}$ for $n \in \mathbb{N}^{'}$, then we say that\linebreak $(K_{n})_{\mathbb{N}^{'}}\ is from\ (M_{n})_{\mathbb{N}^{'}}$ and write $(K_{n})_{\mathbb{N}^{'}} \prec (M_{n})_{\mathbb{N}^{'}}$.

A vector $x$ from a unit sphere of a Banach space $X$ is called \emph{a unit vector}; a sequence of unit vectors $(x_{n})_{\mathbb{N}^{'}}$ is called \emph{a unit sequence}.

All \emph{subspaces} and \emph{operators} in Banach spaces are meant to be \emph{linear}.

\emph{A null element} of a Banach space is denoted by $\theta$; \emph{a null subspace} of a Banach space -- the one that consists of a single element $\theta$ -- is denoted by $\{\theta\}$; \emph{a null operator} from a Banach space $X$ to a Banach space $Y$ -- the one that maps every vector from $X$ into $\theta$ from $Y$ -- is also denoted by $\theta$.

If ${X^{1},...,X^{k}}$ are ${k\geq2}$ Banach spaces, then their direct product ${\prod_{i=1}^{k}X^{i}\ =\ X^{1} \times \cdots \times X^{k}}$ is a Banach space of ordered $k-$tuples ${(x^{1},...,x^{k})}$ such that ${x^{i} \in X^{i}}$ for each ${i=1,...,k}$; the norm on ${\prod_{i=1}^{k}X^{i}}$ is defined as ${\max\{\left\|x^{1}\right\|,...,\left\|x^{k}\right\|\}}$.

If $X$ and $Y$ are two Banach spaces, then $\mathcal{C}(X,Y)$ is a set of \emph{closed linear operators} -- those which graphs are closed in the product space $X \times Y$; domain $dom(A)$ of a closed operator ${A \in \mathcal{C}(X,Y)}$ may be \emph{a proper subspace} of $X$. 

By ${\mathcal{B}(X,Y)}$ we denote a Banach space, furnished with operator norm, of \emph{continuous (i.e. bounded) operators} defined \emph{on all} $X$ and acting into $Y$; ${\mathcal{K}(X,Y)}$ is a space of all \emph{compact operators}. Note inclusions
\[
\mathcal{K}(X,Y)\ \subset\ \mathcal{B}(X,Y)\ \subset\ \mathcal{C}(X,Y).
\]
\emph{Dimension} of a Banach space $X$, denoted by ${\dim X}$, is the power of a maximal set of linearly independent vectors; if it is not finite, then we write ${\dim X = \infty}$. When dimension numbers are finite, their addition and subtraction  follow usual rules of arithmetics; when at least one of the dimensions is ${\infty}$, then by definition the result of any addition or subtraction is ${\infty}$ as well.

\newtheorem{theorem}{Theorem}[subsection]
\newtheorem{lemma}[theorem]{Lemma}
\newtheorem{proposition}[theorem]{Proposition}
\newtheorem{definition}[theorem]{Definition}
\newtheorem{example}[theorem]{Example}
\newtheorem{remark}[theorem]{Remark}

\section[Uniform $\lambda-$Adjustment and Strict Singularity]{Uniform $\lambda-$Adjustment and Strict Singularity}\label{S:ssula}

In this section we first recall the definitions of uniform $\lambda-$adjustment of subspaces and operators, strictly singular operators and pairs of subspaces as well as finitely strictly singular operators. Then we define the new concepts of strictly singular and finitely strictly singular uniform $\lambda-$adjustment of subspaces and operators. After that, we show that the latter concepts encompass the former ones and provide some examples of strictly singular and finitely strictly singular $\lambda-$adjustment.

\subsection[Uniform $\lambda-$Adjustment of Subspaces and Operators]{Uniform $\lambda-$Adjustment of Subspaces and Operators}\label{S:ula}

The following definitions of the uniform $\lambda-$adjustment had been first introduced in \cite{burshteyn}.

\begin{definition}[Uniform $\lambda-$Adjustment of Sequences of Subspaces]\label{D:ulass} 
Let $(M_{n})_{\mathbb{N}^{'}}$ and $(P_{n})_{\mathbb{N}^{'}}$ be a pair of sequences of closed subspaces from a Banach space $X$, ${M_{n} \neq \{\theta\}}$ for all ${n \in \mathbb{N}^{'}}$ and $\lambda \geq 0$. We say that $(M_{n})_{\mathbb{N}^{'}}$ is lower $uniformly\ \lambda-adjusted$ with $(P_{n})_{\mathbb{N}^{'}}$ if for any $\eta > 0$ and for any unit subsequence $(x_{n})_{\mathbb{N}^{''}}$ from $(M_{n})_{\mathbb{N}^{''}}$ there exists a subsequence $(y_{n})_{\mathbb{N}^{'''}}$ from $(P_{n})_{\mathbb{N}^{'''}}$ and a vector $z \in X$ such that 
\[
\varlimsup_{n \in \mathbb{N}^{'''}} \left\|x_{n} - y_{n} - z \right\|\ \leq\ \lambda + \eta. 
\]
The uniform $\lambda-$adjustment between $(M_{n})_{\mathbb{N}^{'}}$ and $(P_{n})_{\mathbb{N}^{'}}$ is a non-negative real number defined as
\[
\lambda_{\mathbb{N}^{'}}[M_{n}, P_{n}]\ :=\ \inf \{ \lambda \in \mathbb{R} \mid (M_{n})_{\mathbb{N}^{'}}\ \text{is lower uniformly $\lambda-$adjusted with}\ (P_{n})_{\mathbb{N}^{'}}\}.
\]
Also, by definition
\begin{multline}
\notag
\begin{aligned}
&\lambda_{\mathbb{N}^{'}}[M, P_{n}] &\ :=\ \lambda_{\mathbb{N}^{'}}[M_{n}, P_{n}]\ &\ where\ \ M = M_{n}\ for\ n \in {N}^{'},\\
&\lambda_{\mathbb{N}^{'}}[M_{n}, P] &\ :=\ \lambda_{\mathbb{N}^{'}}[M_{n}, P_{n}]\ &\ where\ \ P = P_{n}\ for\ n \in {N}^{'},\\
&\lambda[M, P]                      &\ :=\ \lambda_{\mathbb{N}^{'}}[M_{n}, P_{n}]\ &\ where\ \ P = P_{n}\ and\ M = M_{n}\ for\ n \in {N}^{'}.
\end{aligned}
\end{multline}
\end{definition}

\begin{definition}[Uniform $\lambda-$Adjustment of Sequences of Closed Linear Operators]\label{D:ulasclo0} 
Let ${X}$ and ${Y}$ are two Banach spaces. Consider $(A_{n})_{\mathbb{N}^{'}}$ and $(
B_{n})_{\mathbb{N}^{'}}$ -- a pair of sequences of operators from $\mathcal{C}(X,Y)$, as well as $(G_{A_{n}})_{\mathbb{N}^{'}}$ and $(G_{B_{n}})_{\mathbb{N}^{'}}$ -- sequences of their respective graphs in the product space ${X \times Y}$ (recall that a graph of a closed operator $A \in \mathcal{C}(X,Y)$ is a closed subspace of ${X \times Y}$ defined as set of ordered pairs ${\{(x, Ax) \mid x \in Dom(A) \subset X\}}$). 

We say that $(A_{n})_{\mathbb{N}^{'}}$ is lower $uniformly\ \lambda-adjusted$ with $(B_{n})_{\mathbb{N}^{'}}$ if the sequence of graphs $(G_{A_{n}})_{\mathbb{N}^{'}}$ is lower $uniformly\ \lambda-adjusted$ with the sequence of graphs $(G_{B_{n}})_{\mathbb{N}^{'}}$ in the product space ${X \times Y}$.

The uniform $\lambda-$adjustment between $(A_{n})_{\mathbb{N}^{'}}$ and $(B_{n})_{\mathbb{N}^{'}}$ is defined as uniform $\lambda-$adjustment between their sequences of graphs:
\[
\lambda_{\mathbb{N}^{'}}[A_{n}, B_{n}]\ :=\ \lambda_{\mathbb{N}^{'}}[G_{A_{n}}, G_{B_{n}}].
\]
Also, by definition
\begin{multline}
\notag
\begin{aligned}
&\lambda_{\mathbb{N}^{'}}[A, B_{n}] &\ :=\ \lambda_{\mathbb{N}^{'}}[A_{n}, B_{n}]\ &\ where\ \ A = A_{n}\ for\ n \in {N}^{'},\\
&\lambda_{\mathbb{N}^{'}}[A_{n}, B] &\ :=\ \lambda_{\mathbb{N}^{'}}[A_{n}, B_{n}]\ &\ where\ \ B = B_{n}\ for\ n \in {N}^{'},\\
&\lambda[A, B]                      &\ :=\ \lambda_{\mathbb{N}^{'}}[A_{n}, B_{n}]\ &\ where\ \ A = A_{n}\ and\ B = B_{n}\ for\ n \in {N}^{'}.
\end{aligned}
\end{multline}
\end{definition}

For detailed discussion on $\lambda-$adjustment see \cite{burshteyn} where it is proved that perturbations by small gap, small Hausdorf measure of non-compactness, small norm, as well as $K_{2}-$approximation are all particular cases of $\lambda-$adjustment and that a variety of (semi-)Fredholm stability theorems remain true in the context of $\lambda-$adjustment.

\subsection[Strictly Singular and Finitely Strictly Singular Operators and Pairs of Subspaces]{Strictly Singular and Finitely Strictly Singular Operators and Pairs of Subspaces}\label{S:ussfssops}

The following definition of a strictly singular operator is due to T. Kato \cite{kato0}: 

\begin{definition}[Strictly Singular Operator]\label{D:sso0}
If $X,Y$ are two Banach spaces then operator $A \in \mathcal{B}(X,Y)$ is called a strictly singular operator if for every ${\epsilon > 0}$ and for every closed subspace $Z \subset X$ with ${\dim Z = \infty}$ there exists ${z \in Z}$ such that ${\left\|Tz\right\| < \varepsilon \left\|z\right\|}$. In other words $A$'s restriction on any closed subspace $Z \subset X$ with ${\dim Z = \infty}$ is not an isomorphism between $Z$ and $A(Z)$. The set of strictly singular operators from $\mathcal{B}(X,Y)$ is denoted as $\mathcal{SS}(X,Y)$.
\end{definition}
In \cite{kato0} T. Kato had proved Fredholm stability theorem for perturbations by strictly singular operators. Also, it is well known that
\[
\mathcal{K}(X,Y)\ \subset\ \mathcal{SS}(X,Y)\ \subset\ \mathcal{B}(X,Y)
\]
and that in general each inclusion is proper (see P. Aiena \cite{aiena}, S. Goldberg, E. Thorp \cite{goldberg_thorp} and S. Goldberg \cite{goldberg0}.

The next version of strict singularity for subspaces had first appeared in M. Gonzales \cite{gonzales}:

\begin{definition}[Strictly Singular Pair of Subspaces]\label{D:fssps}
Let $M,N$ be a pair of closed subspaces from a Banach space $X$. Denote $Q_{N}$ to be a canonical surjection from $X$ onto a quotient space $X/N$, also denote $J_{M}$ to be an identity injection from $M$ into $X$. We say that the pair $(M,N)$ belongs to the class of strictly singular operators if ${Q_{N} \circ J_{M} \in \mathcal{S}\mathcal{S}(M, X/N)}$.
\end{definition}
In \cite{gonzales} M. Gonzales proved (semi-)Fredholm stability theorems for pairs belonging to the class of strictly singular operators.

The next refinement of strict singularity -- finitely strictly singular operators -- is due to V. D. Milman \cite{milman} and B. Sari, Th. Schlumprecht, N. Tomczak-Jaegermann, V. G. Troitsky \cite{Sari_Schlumprecht_Tomczak_Jaegermann_Troitsky}:
\begin{definition}[Finitely Strictly Singular Operator]\label{D:fsso}
If $X,Y$ are two Banach spaces then operator ${A \in \mathcal{B}(X,Y)}$ is called a finitely strictly singular operator if for every ${\varepsilon > 0}$ there exists ${n \in \mathbb{N}}$ such that for every subspace ${Z \subset X}$ with ${\dim Z \geq n}$ there exists ${z \in Z}$ such that ${\left\|Az\right\| < \varepsilon \left\|z\right\|}$. The set of all finitely strictly singular operators from ${\mathcal{B}(X,Y)}$ is denoted as ${\mathcal{FSS}(X,Y)}$.
\end{definition}
It is well known (see \cite{Sari_Schlumprecht_Tomczak_Jaegermann_Troitsky}) that 
\[
\mathcal{K}(X,Y)\ \subset\ \mathcal{FSS}(X,Y)\ \subset\ \mathcal{SS}(X,Y)
\]
and that in general inclusions are proper (see also V. D. Milman \cite{milman}, A. Plichko \cite{plichko}).
\subsection[Strictly Singular Uniform $\lambda-$Adjustment for Sequences of Subspaces]{Strictly Singular Uniform $\lambda-$Adjustment for Sequences of Subspaces}\label{S:ssulass}
Having just recalled in the previous subsection the existing concepts of uniform $\lambda-$adjustment and strict singularity, we combine them together in the following two definitions.
\begin{definition}[Strictly Singular Uniform $\lambda-$Adjustment]\label{ssua}
Let $(M_{n})_{\mathbb{N}^{'}}$ and $(P_{n})_{\mathbb{N}^{'}}$ be a pair of sequences of closed subspaces from a Banach space $X$, ${M_{n} \neq \{\theta\}}$ for all ${n \in \mathbb{N}^{'}}$ and $\lambda \geq 0$. Then the following definitions are in order:
\begin{enumerate}
\item We say that $(M_{n})_{\mathbb{N}^{'}}$ is $lower$\ $strictly$\ $singular$\ $uniformly$\ $\lambda-adjusted$ with $(P_{n})_{\mathbb{N}^{'}}$ if for any subsequence of closed subspaces $(K_{n})_{\mathbb{N}^{''}} \prec (M_{n})_{\mathbb{N}^{''}}$ such that ${\dim K_{n} = \infty}$ for all ${n \in \mathbb{N''}}$ there exists a subsequence of closed subspaces $(L_{n})_{\mathbb{N}^{'''}} \prec (K_{n})_{\mathbb{N}^{'''}}$ such that ${\dim L_{n} = \infty}$ for all ${n \in \mathbb{N'''}}$ with the property ${\lambda_{\mathbb{N}^{'''}}[L_{n}, P_{n}] \leq \lambda}$. Let $\mathcal{S}\mathcal{S}\Lambda_{\mathbb{N}^{'}}[M_{n}, P_{n}]$ be the set of all such real numbers $\lambda$; then the $strictly$\ $singular$\ $uniform$\ $\lambda-$$adjustment$\ between $(M_{n})_{\mathbb{N}^{'}}$ and $(P_{n})_{\mathbb{N}^{'}}$ is a non-negative real number defined as
\[
\mathcal{S}\mathcal{S}\lambda_{\mathbb{N}^{'}}[M_{n}, P_{n}]\ :=\ \inf \{ \lambda \in \mathcal{S}\mathcal{S}\Lambda_{\mathbb{N}^{'}}[M_{n}, P_{n}] \}.
\]
\item We say that $(M_{n})_{\mathbb{N}^{'}}$ is $lower$\ $finitely$\ $strictly$\ $singular$\ $uniformly$\ $\lambda-adjusted$ with $(P_{n})_{\mathbb{N}^{'}}$ if for any subsequence of subspaces $(K_{n})_{\mathbb{N}^{''}} \prec (M_{n})_{\mathbb{N}^{''}}$ such that ${\dim K_{n} < \infty}$ for all ${n \in \mathbb{N''}}$ and ${\lim_{n \in \mathbb{N}^{''}} \dim K_{n} = \infty}$ there exists a subsequence of subspaces $(L_{n})_{\mathbb{N}^{'''}} \prec (K_{n})_{\mathbb{N}^{'''}}$ such that ${\lim_{n \in \mathbb{N}^{'''}} \dim L_{n} = \infty}$ with the property ${\lambda_{\mathbb{N}^{'''}}[L_{n}, P_{n}] \leq \lambda}$. Let $\mathcal{F}\mathcal{S}\mathcal{S}\Lambda_{\mathbb{N}^{'}}[M_{n}, P_{n}]$ be the set of all such real numbers $\lambda$; then the $finitely$\ $strictly$\ $singular$\ $uniform$\ $\lambda-$$adjustment$ between $(M_{n})_{\mathbb{N}^{'}}$ and $(P_{n})_{\mathbb{N}^{'}}$ is a non-negative real number defined as
\[
\mathcal{F}\mathcal{S}\mathcal{S}\lambda_{\mathbb{N}^{'}}[M_{n}, P_{n}]\ :=\ \inf \{ \lambda \in \mathcal{F}\mathcal{S}\mathcal{S}\Lambda_{\mathbb{N}^{'}}[M_{n}, P_{n}] \}.
\]
\end{enumerate}
\end{definition}
As it has been noted in \cite{burshteyn}, $\lambda_{\mathbb{N}^{'}}[M_{n}, P_{n}] \leq 1$ for any two sequences of subspaces $(M_{n})_{\mathbb{N}^{'}}$ and $(P_{n})_{\mathbb{N}^{'}}$. It is easy to see that both numbers ${\mathcal{S}\mathcal{S}\lambda_{\mathbb{N}^{'}}[M_{n}, P_{n}]}$ and ${\mathcal{F}\mathcal{S}\mathcal{S}\lambda_{\mathbb{N}^{'}}[M_{n}, P_{n}]}$ are well defined and do not exceed ${\lambda_{\mathbb{N}^{'}}[M_{n}, P_{n}]}$ -- to check that choose ${L_{n} = K_{n}}$ for all ${n \in \mathbb{N^{''}}}$ for every case. In summary, the following lemma is true:
\begin{lemma}[Strictly Singular Uniform $\lambda-$Adjustment is Well Defined]\label{L:ssulawd}
Let $(M_{n})_{\mathbb{N}^{'}}$ and $(P_{n})_{\mathbb{N}^{'}}$ be a pair of sequences of closed subspaces from a Banach space $X$, ${M_{n} \neq \{\theta\}}$ for all ${n \in \mathbb{N}^{'}}$. Then both numbers ${\mathcal{S}\mathcal{S}\lambda_{\mathbb{N}^{'}}[M_{n}, P_{n}]}$ and ${\mathcal{F}\mathcal{S}\mathcal{S}\lambda_{\mathbb{N}^{'}}[M_{n}, P_{n}]}$ are well defined and satisfy the following relations
\begin{multline}
\notag
\begin{aligned}
0\ &\leq\ \min(\mathcal{S}\mathcal{S}\lambda_{\mathbb{N}^{'}}[M_{n}, P_{n}],\  \mathcal{F}\mathcal{S}\mathcal{S}\lambda_{\mathbb{N}^{'}}[M_{n}, P_{n}])\\
   &\leq\ \max(\mathcal{S}\mathcal{S}\lambda_{\mathbb{N}^{'}}[M_{n}, P_{n}],\  \mathcal{F}\mathcal{S}\mathcal{S}\lambda_{\mathbb{N}^{'}}[M_{n}, P_{n}])\ \leq\ \lambda_{\mathbb{N}^{'}}[M_{n}, P_{n}]\ \leq\ 1.
\end{aligned}
\end{multline}
\end{lemma}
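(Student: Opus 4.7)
The plan is to follow the hint the author gives immediately before the lemma: for each of the two items of Definition~\ref{ssua}, given any qualifying subsequence $(K_n)_{\mathbb{N}^{''}} \prec (M_n)_{\mathbb{N}^{''}}$, take $L_n := K_n$ and $\mathbb{N}^{'''} := \mathbb{N}^{''}$ and verify that the resulting uniform $\lambda$-adjustment number is bounded above by $\lambda_{\mathbb{N}^{'}}[M_n, P_n]$. This will place $\lambda_{\mathbb{N}^{'}}[M_n, P_n]$ itself into both $\mathcal{S}\mathcal{S}\Lambda_{\mathbb{N}^{'}}[M_n, P_n]$ and $\mathcal{F}\mathcal{S}\mathcal{S}\Lambda_{\mathbb{N}^{'}}[M_n, P_n]$, showing at once that each of these sets is non-empty (so both infima are well defined) and that $\max(\mathcal{S}\mathcal{S}\lambda_{\mathbb{N}^{'}}[M_n, P_n],\ \mathcal{F}\mathcal{S}\mathcal{S}\lambda_{\mathbb{N}^{'}}[M_n, P_n]) \leq \lambda_{\mathbb{N}^{'}}[M_n, P_n]$.

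The one substantive step I have to check is the monotonicity statement $\lambda_{\mathbb{N}^{''}}[K_n, P_n] \leq \lambda_{\mathbb{N}^{'}}[M_n, P_n]$ whenever $\mathbb{N}^{''} \subset \mathbb{N}^{'}$ and $K_n \subset M_n$ for each $n \in \mathbb{N}^{''}$. I would unwind this directly from Definition~\ref{D:ulass}: fix any $\lambda > \lambda_{\mathbb{N}^{'}}[M_n, P_n]$ and any $\eta > 0$; any unit subsequence $(x_n)_{\mathbb{N}^{'''}}$ from $(K_n)_{\mathbb{N}^{'''}}$ with $\mathbb{N}^{'''} \subset \mathbb{N}^{''}$ is automatically a unit subsequence from $(M_n)_{\mathbb{N}^{'''}}$ because $K_n \subset M_n$, so the $\lambda$-adjustment property of $(M_n, P_n)$ yields a further subsequence $(y_n)_{\mathbb{N}^{'4}}$ from $(P_n)_{\mathbb{N}^{'4}}$ and a vector $z \in X$ with $\varlimsup_{n \in \mathbb{N}^{'4}} \left\|x_n - y_n - z\right\| \leq \lambda + \eta$. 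Hence $(K_n)_{\mathbb{N}^{''}}$ is lower uniformly $\lambda$-adjusted with $(P_n)_{\mathbb{N}^{''}}$ for every such $\lambda$, and passing to the infimum gives the claimed monotonicity.

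For the remaining endpoints of the chain, non-negativity is built in because Definition~\ref{ssua} only admits $\lambda \geq 0$ into $\mathcal{S}\mathcal{S}\Lambda$ and $\mathcal{F}\mathcal{S}\mathcal{S}\Lambda$, and the final inequality $\lambda_{\mathbb{N}^{'}}[M_n, P_n] \leq 1$ is already established in \cite{burshteyn}. The only minor subtlety is the degenerate case in which no subsequence $(K_n)$ satisfying the respective dimension premise exists (e.g.\ eventually all $M_n$ finite-dimensional in the strictly singular case, or $\dim M_n$ uniformly bounded in the finitely strictly singular case): then the quantified condition is vacuously true for every $\lambda \geq 0$, so the corresponding infimum is $0$, which is still consistent with the chain. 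Beyond this bookkeeping I do not expect any real obstacle --- the whole argument is a monotonicity observation layered on top of Definition~\ref{D:ulass}.
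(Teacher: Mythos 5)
Your proposal is correct and follows exactly the route the paper takes: the paper's entire argument is the remark immediately preceding the lemma (``to check that choose $L_{n} = K_{n}$ for all $n \in \mathbb{N}^{''}$''), and you simply spell out the underlying monotonicity of $\lambda_{\mathbb{N}^{'}}[\,\cdot\,,\,\cdot\,]$ under passing to subsequences of subspaces, which the paper leaves implicit. Your handling of the vacuous case and the appeal to \cite{burshteyn} for $\lambda_{\mathbb{N}^{'}}[M_{n}, P_{n}] \leq 1$ also match the paper's presentation.
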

\begin{remark}\label{r:0}
It is obvious that for every sequence of finite-dimensional subspaces ${(R_{n})_{\mathbb{N^{'}}} \subset X}$ and for any two sequences of closed subspaces ${(M_{n})_{\mathbb{N^{'}}},(P_{n})_{\mathbb{N^{'}}} \subset X}$
\[
\mathcal{S}\mathcal{S}\lambda_{\mathbb{N}^{'}}[M_{n} + R_{n}, P_{n}]\            =\ \mathcal{S}\mathcal{S}\lambda_{\mathbb{N}^{'}}[M_{n}, P_{n}],\\ 
\]
Similarly, it is obvious that if ${\varlimsup \dim R_{n} < \infty}$ then ${\mathcal{F}\mathcal{S}\mathcal{S}\lambda_{\mathbb{N}^{'}}[R_{n}, \{\theta\}] = 0}$. Moreover, for the same ${(R_{n})_{\mathbb{N^{'}}}}$ and for any two sequences of closed subspaces ${(M_{n})_{\mathbb{N^{'}}},(P_{n})_{\mathbb{N^{'}}} \subset X}$
\[
\mathcal{F}\mathcal{S}\mathcal{S}\lambda_{\mathbb{N}^{'}}[M_{n} + R_{n}, P_{n}]\ =\ \mathcal{F}\mathcal{S}\mathcal{S}\lambda_{\mathbb{N}^{'}}[M_{n}, P_{n}].
\]
\end{remark}
\begin{example}\label{Ex:1}
In general ${\mathcal{S}\mathcal{S}\lambda_{\mathbb{N}^{'}}[M_{n}, P_{n}] \neq \lambda_{\mathbb{N}^{'}}[M_{n}, P_{n}]}$ and ${\mathcal{F}\mathcal{S}\mathcal{S}\lambda_{\mathbb{N}^{'}}[M_{n}, P_{n}] \neq \lambda_{\mathbb{N}^{'}}[M_{n}, P_{n}]}$. 
\end{example}
\begin{proof}
Consider a partition of the set of natural numbers ${\mathbb{N}}$ into an infinite sequence of disjoint non-empty finite subsets
\[
\mathbb{N} = \bigcup_{n \in \mathbb{N}} B_{n},\ \ B_{n} \neq \emptyset\ for\ all\ n \in \mathbb{N},\ \ B_{i} \bigcap B_{j} = \emptyset\ for\ i\ \neq j.
\]
Define closed subspaces ${M_{n} \subset l_{\infty}}$ consisting of sequences ${(\alpha^{n}_{i})_{i \in \mathbb{N}}}$ which components with indices from ${\mathbb{N} \backslash B_{n}}$ are null:
\[
M_{n} := \{ (\alpha^{n}_{i})_{i \in \mathbb{N}} \in l_{\infty} \mid i \notin B_{n} \Rightarrow \alpha^{n}_{i} = 0 \}.
\]
It had been shown in example $2.4.1$ from \cite{burshteyn} that ${\lambda_{\mathbb{N}}[M_{n}, \{\theta\}] = 1/2}$. At the same time, if all sets ${B_{n}}$ are finite, then ${\dim B_{n} = card B_{n}}$ where ${card B_{n}}$ is the number of elements in ${B_{n}}$, thus ${\mathcal{S}\mathcal{S}\lambda_{\mathbb{N}}[M_{n}, \{\theta\}] = 0}$. Further, if ${card B_{n} < K < \infty}$ for all $n$, then ${\mathcal{F}\mathcal{S}\mathcal{S}\lambda_{\mathbb{N}}[M_{n}, \{\theta\}] = 0}$.
\end{proof}
\begin{definition}[Notation for Single Subspaces]\label{D:nss}
The following definitions extend notation for cases when one or both sequences of subspaces degenerate to a single subspace:
\begin{itemize}
\item if ${M = M_{n}}$ for all ${n \in {N}^{'}}$ then
\begin{multline}
\notag
\begin{aligned}
\mathcal{F}\mathcal{S}\mathcal{S}\lambda_{\mathbb{N}^{'}}[M, P_{n}]\ :=\  \mathcal{F}\mathcal{S}\mathcal{S}\lambda_{\mathbb{N}^{'}}[M_{n}, P_{n}],\  
\mathcal{S}\mathcal{S}\lambda_{\mathbb{N}^{'}}[M, P_{n}]\            :=\  \mathcal{S}\mathcal{S}\lambda_{\mathbb{N}^{'}}[M_{n}, P_{n}];
\end{aligned}
\end{multline}
\item if ${P = P_{n}}$ for all ${n \in {N}^{'}}$ then
\begin{multline}
\notag
\begin{aligned}
\mathcal{F}\mathcal{S}\mathcal{S}\lambda_{\mathbb{N}^{'}}[M_{n}, P]\ :=\                         \mathcal{F}\mathcal{S}\mathcal{S}\lambda_{\mathbb{N}^{'}}[M_{n}, P_{n}],\ 
\mathcal{S}\mathcal{S}\lambda_{\mathbb{N}^{'}}[M_{n}, P]\            :=\ \mathcal{S}\mathcal{S}\lambda_{\mathbb{N}^{'}}[M_{n}, P_{n}];
\end{aligned}
\end{multline}
\item if ${M = M_{n}}$ and ${P = P_{n}}$ for all ${n \in {N}^{'}}$ then
\begin{multline}
\notag
\begin{aligned}
\mathcal{F}\mathcal{S}\mathcal{S}\lambda[M, P]\ :=\ \mathcal{F}\mathcal{S}\mathcal{S}\lambda_{\mathbb{N}^{'}}[M_{n}, P_{n}],\  
\mathcal{S}\mathcal{S}\lambda[M, P]\            :=\ \mathcal{S}\mathcal{S}\lambda_{\mathbb{N}^{'}}[M_{n}, P_{n}].
\end{aligned}
\end{multline}
\end{itemize}
\end{definition}
\subsection[Strictly Singular Uniform $\lambda-$Adjustment for Sequences of Operators]{Strictly Singular Uniform $\lambda-$Adjustment for Sequences of Operators}\label{S:ssulaso}
Our final two definitions induce strictly singular and finitely strictly singular $\lambda-$adjustment from closed subspaces to closed linear operators in a usual way by applying subspace concepts to the operator graphs in the product space.
\begin{definition}[Strictly Singular Uniform $\lambda-$Adjustment of Sequences of Closed Linear Operators]\label{D:ssulssclo} 
Let ${X}$ and ${Y}$ are two Banach spaces. Consider $(A_{n})_{\mathbb{N}^{'}}$ and $(
B_{n})_{\mathbb{N}^{'}}$ -- a pair of sequences of operators from $\mathcal{C}(X,Y)$, as well as $(G_{A_{n}})_{\mathbb{N}^{'}}$ and $(G_{B_{n}})_{\mathbb{N}^{'}}$ -- sequences of their respective graphs in the product space ${X \times Y}$. 
\begin{itemize}
\item We say that $(A_{n})_{\mathbb{N}^{'}}$ is $lower$ $strictly$ $singular$ $lower$ $uniformly$ $\lambda-$$adjusted$ with $(B_{n})_{\mathbb{N}^{'}}$ if the sequence of graphs $(G_{A_{n}})_{\mathbb{N}^{'}}$ is lower strictly singular uniformly $\lambda-$adjusted with the sequence of graphs $(G_{B_{n}})_{\mathbb{N}^{'}}$ in the product space ${X \times Y}$. The lower strictly singular uniform $\lambda-$adjustment between $(A_{n})_{\mathbb{N}^{'}}$ and $(B_{n})_{\mathbb{N}^{'}}$ is defined as lower strictly singular uniform $\lambda-$adjustment between their sequences of graphs:
\[
\mathcal{S}\mathcal{S}\lambda_{\mathbb{N}^{'}}[A_{n}, B_{n}]\ :=\ \mathcal{S}\mathcal{S}\lambda_{\mathbb{N}^{'}}[G_{A_{n}}, G_{B_{n}}].
\]
\item We say that $(A_{n})_{\mathbb{N}^{'}}$ is $lower$ $finitely$ $strictly$ $singular$ $lower$ $uniformly$ $\lambda-$$adjusted$ with $(B_{n})_{\mathbb{N}^{'}}$ if the sequence of graphs $(G_{A_{n}})_{\mathbb{N}^{'}}$ is lower finitely strictly singular uniformly $\lambda-$adjusted with the sequence of graphs $(G_{B_{n}})_{\mathbb{N}^{'}}$ in the product space ${X \times Y}$. The lower finitely strictly singular uniform $\lambda-$adjustment between $(A_{n})_{\mathbb{N}^{'}}$ and $(B_{n})_{\mathbb{N}^{'}}$ is defined as lower finitely strictly singular uniform $\lambda-$adjustment between their sequences of graphs:
\[
\mathcal{F}\mathcal{S}\mathcal{S}\lambda_{\mathbb{N}^{'}}[A_{n}, B_{n}]\ :=\ \mathcal{F}\mathcal{S}\mathcal{S}\lambda_{\mathbb{N}^{'}}[G_{A_{n}}, G_{B_{n}}].
\]
\end{itemize}
\end{definition}
\begin{definition}[Notation for Single Operators]\label{D:nso}
The following definitions extend notation for cases when one or both sequences of operators degenerate to a single operator:
\begin{itemize}
\item if ${A = A_{n}}$ for all ${n \in {N}^{'}}$ then
\begin{multline}
\notag
\begin{aligned}
\mathcal{F}\mathcal{S}\mathcal{S}\lambda_{\mathbb{N}^{'}}[A, B_{n}]\ :=\  \mathcal{F}\mathcal{S}\mathcal{S}\lambda_{\mathbb{N}^{'}}[A_{n}, B_{n}],\ 
\mathcal{S}\mathcal{S}\lambda_{\mathbb{N}^{'}}[A, B_{n}]\            :=\  \mathcal{S}\mathcal{S}\lambda_{\mathbb{N}^{'}}[A_{n}, B_{n}];
\end{aligned}
\end{multline}
\item if ${B = B_{n}}$ for all ${n \in {N}^{'}}$ then
\begin{multline}
\notag
\begin{aligned}
\mathcal{F}\mathcal{S}\mathcal{S}\lambda_{\mathbb{N}^{'}}[A_{n}, B]\ :=\ \mathcal{F}\mathcal{S}\mathcal{S}\lambda_{\mathbb{N}^{'}}[A_{n}, B_{n}],\ 
\mathcal{S}\mathcal{S}\lambda_{\mathbb{N}^{'}}[A_{n}, B]\            :=\ \mathcal{S}\mathcal{S}\lambda_{\mathbb{N}^{'}}[A_{n}, B_{n}];
\end{aligned}
\end{multline}
\item if ${A = A_{n}}$ and ${B = B_{n}}$ for all ${n \in {N}^{'}}$ then
\begin{multline}
\notag
\begin{aligned}
\mathcal{F}\mathcal{S}\mathcal{S}\lambda[A, B]\ :=\ \mathcal{F}\mathcal{S}\mathcal{S}\lambda_{\mathbb{N}^{'}}[A_{n}, B_{n}],\ 
\mathcal{S}\mathcal{S}\lambda[A, B]\            :=\ \mathcal{S}\mathcal{S}\lambda_{\mathbb{N}^{'}}[A_{n}, B_{n}].
\end{aligned}
\end{multline}
\end{itemize}
\end{definition}
We now establish relation between strict singularity of a bounded operator with its strictly singular $0-$adjustment with the null operator.
\begin{lemma}[Strict Singularity of Bounded Operators Implies Their Strictly Singular $0-$Adjustment with the Null Operator]\label{ssboiissa}
Let ${X,Y}$ be two Banach spaces and ${(A_{n})_{\mathbb{N^{'}}} \subset \mathcal{SS}(X,Y)}$. Then\linebreak ${\mathcal{S}\mathcal{S}\lambda_{\mathbb{N^{'}}}[A_{n}, \theta] = 0}$.
\end{lemma}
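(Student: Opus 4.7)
The plan is to prove the stronger assertion that $0\in\mathcal{S}\mathcal{S}\Lambda_{\mathbb{N}^{'}}[A_{n},\theta]$. Fix any subsequence of closed subspaces $(K_{n})_{\mathbb{N}^{''}}\prec(G_{A_{n}})_{\mathbb{N}^{''}}$ with $\dim K_{n}=\infty$. Since each $A_{n}$ is bounded, the first-coordinate projection $\pi_{X}\colon X\times Y\to X$ restricts to a topological isomorphism from $G_{A_{n}}$ onto $X$, so $Z_{n}:=\pi_{X}(K_{n})$ is a closed infinite-dimensional subspace of $X$ and $K_{n}=\{(x,A_{n}x):x\in Z_{n}\}$. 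This reduces the graph-level problem to one about subspaces of $X$.

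Next I would invoke the classical subspace refinement of strict singularity: for each $A_{n}\in\mathcal{S}\mathcal{S}(X,Y)$ and every $\varepsilon>0$, every infinite-dimensional closed subspace of $X$ contains an infinite-dimensional closed subspace on which $A_{n}$ has operator norm at most $\varepsilon$. Applied to $A_{n}$ on $Z_{n}$ with $\varepsilon_{n}:=1/n$, this yields infinite-dimensional closed subspaces $W_{n}\subseteq Z_{n}$ with $\|A_{n}|_{W_{n}}\|\leq 1/n$, and I set
\[
L_{n}\ :=\ \{(x,A_{n}x):x\in W_{n}\}\ \subseteq\ K_{n}.
\]
Each $L_{n}$ is an infinite-dimensional closed subspace of $K_{n}$, so $(L_{n})_{\mathbb{N}^{''}}\prec(K_{n})_{\mathbb{N}^{''}}$ is the required candidate sequence.

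To finish, I would verify $\lambda_{\mathbb{N}^{''}}[L_{n},G_{\theta}]=0$ directly from Definition \ref{D:ulass}. Given any $\eta>0$ and any unit subsequence $((x_{n},A_{n}x_{n}))_{n\in\mathbb{N}^{'''}}\triangleleft(L_{n})_{\mathbb{N}^{'''}}$ one has $\max(\|x_{n}\|,\|A_{n}x_{n}\|)=1$, hence $\|x_{n}\|\leq 1$. The natural choice $y_{n}:=(x_{n},\theta)\in G_{\theta}$ together with $z:=\theta\in X\times Y$ gives
\[
\|(x_{n},A_{n}x_{n})-y_{n}-z\|\ =\ \|A_{n}x_{n}\|\ \leq\ \|A_{n}|_{W_{n}}\|\cdot\|x_{n}\|\ \leq\ 1/n,
\]
so $\varlimsup_{n\in\mathbb{N}^{'''}}\|(x_{n},A_{n}x_{n})-y_{n}-z\|=0\leq 0+\eta$. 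This yields $\lambda_{\mathbb{N}^{''}}[L_{n},G_{\theta}]=0$, hence $0\in\mathcal{S}\mathcal{S}\Lambda_{\mathbb{N}^{'}}[A_{n},\theta]$, and therefore $\mathcal{S}\mathcal{S}\lambda_{\mathbb{N}^{'}}[A_{n},\theta]=0$.

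The only substantive technical input is the subspace refinement used in the second paragraph: one upgrades the pointwise definition of $\mathcal{S}\mathcal{S}$ (a single vector per $\varepsilon$) to an entire infinite-dimensional closed subspace of small operator norm. This is a classical consequence, typically proved by an iterative Mazur-style extraction of a basic sequence $(z_{k})\subset Z$ whose ratios $\|Az_{k}\|/\|z_{k}\|$ decay fast enough to dominate the basis constant, so I would simply cite it from a standard reference such as Kato \cite{kato0} or Aiena \cite{aiena}. Every other step is bookkeeping exploiting the isomorphism between a bounded operator's graph and its domain.
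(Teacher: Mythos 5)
Your proof is correct and takes essentially the same route as the paper's. The paper invokes Schechter's measure $\Delta(A)$ together with the characterization $A\in\mathcal{S}\mathcal{S}(X,Y)\Leftrightarrow\Delta(A)=0$, which is precisely your subspace refinement (for every infinite-dimensional closed $Z\subset X$ and $\varepsilon>0$ there is an infinite-dimensional closed $W\subset Z$ with $\left\|A\mid_{W}\right\|<\varepsilon$); the graph-to-subspace reduction and the explicit choices $y_{n}=(x_{n},\theta)$, $z=\theta$ that you spell out are exactly what the paper compresses into the asserted inequality $\mathcal{S}\mathcal{S}\lambda_{\mathbb{N}^{'}}[A_{n},\theta]\leq\varlimsup_{n\in\mathbb{N}^{'}}\Delta(A_{n})$.
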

\begin{proof}
First recall the measure of strict singularity ${\Delta(A)}$ for a bounded operator ${A \in \mathcal{B}(X,Y)}$ introduced by M. Schechter in \cite{schechter}:
\[
\Delta(A) \ =\ \sup_{M}\ \inf_{N}\ \{\left\|A \mid _{N}\right\|\ \mid\ N \subset M \subset X,\ \dim N = \dim M = \infty\}.
\]
It has been proved in \cite{schechter} that operator $A$ is strictly singular if and only if ${\Delta(A) = 0}$. It is also clear by the straight application of the definitions that ${\mathcal{S}\mathcal{S}\lambda_{\mathbb{N^{'}}}[A_{n}, \theta] \leq \varlimsup_{n \in \mathbb{N^{'}}} \Delta(A_{n})}$ for any sequence of bounded operators ${(A_{n})_{\mathbb{N^{'}}} \subset \mathcal{B}(X,Y)}$. Therefore, if all $A_{n}$ are strictly singular then
\[
0\ \leq\ \mathcal{S}\mathcal{S}\lambda_{\mathbb{N^{'}}}[A_{n}, \theta]\ \leq\ \varlimsup_{n \in \mathbb{N^{'}}} \Delta(A_{n})\ =\ 0,
\]
thus ${\mathcal{S}\mathcal{S}\lambda[A, \theta] = 0}$.
\end{proof}

\begin{example}\label{Ex:2}[Strictly singular adjustment does not imply strict singularity]
When a sequence of operators is lower strictly singular $\lambda-$adjusted with the null operator, then individual operators from the sequence themselves do not need to be strictly singular.
\end{example}
\begin{proof}
Consider subspaces $M_{n}$ from $l_{\infty}$ from the previous Example \ref{Ex:1} and define a natural projection ${P_{n} : l_{\infty} \rightarrow M_{n}}$. By the reasoning from the Example 2.4.1 from \cite{burshteyn} it is clear that ${\lambda_{\mathbb{N}}[P_{n}, \{\theta\}] = \frac{1}{2}}$ yet when ${card B_{n} = \infty}$ then none of the projections $P_{n}$ is strictly singular -- in fact its Schechter measure of strict singularity ${\Delta(P_{n})}$ is equal to $1$.
\end{proof}
Now we show that strictly singular $0-$adjustment is weaker than the membership of a pair of subspaces in the class of strictly singular operators.
\begin{lemma}[A Pair of Subspaces from Strictly Singular Class is Strictly Singularly $0-$Adjusted]\label{pssscssa}
If $(M,N)$ is a pair of closed subspaces from a Banach space $X$ that belongs to the class of strictly singular operators (see Definition \ref{D:fssps}) then ${\mathcal{S}\mathcal{S}\lambda[M,N] = 0}$.
\end{lemma}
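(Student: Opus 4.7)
The plan is as follows. By Definition~\ref{D:nss}, $\mathcal{S}\mathcal{S}\lambda[M,N]$ is the strictly singular uniform $\lambda-$adjustment of the constant sequences $M_{n}\equiv M$ and $P_{n}\equiv N$, so the task reduces to producing, for every sequence $(K_{n})_{\mathbb{N}^{''}}$ of closed infinite-dimensional subspaces of $M$, a subsequence of closed infinite-dimensional subspaces $(L_{n})_{\mathbb{N}^{'''}} \prec (K_{n})_{\mathbb{N}^{'''}}$ with $\lambda_{\mathbb{N}^{'''}}[L_{n},N]=0$. The crucial ingredient I would invoke is the well-known equivalent characterization of strict singularity (going back to T. Kato): an operator $A\in\mathcal{B}(X',Y')$ is strictly singular if and only if for every closed infinite-dimensional subspace $Z\subseteq X'$ and every $\varepsilon>0$ there exists a closed infinite-dimensional subspace $Z'\subseteq Z$ with $\|A|_{Z'}\|<\varepsilon$.

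Applying this characterization to $A=Q_{N}\circ J_{M}\in\mathcal{SS}(M,X/N)$, I would take $\mathbb{N}^{'''}:=\mathbb{N}^{''}$ and, for each $n\in\mathbb{N}^{''}$, use the characterization with $Z=K_{n}$ and $\varepsilon=1/n$ to obtain a closed infinite-dimensional subspace $L_{n}\subseteq K_{n}$ such that $\|(Q_{N}\circ J_{M})|_{L_{n}}\|<1/n$. Since the quotient norm satisfies $\|Q_{N}(x)\|=\mathrm{dist}(x,N)$, this means exactly that $\mathrm{dist}(x,N)<(1/n)\|x\|$ for every $x\in L_{n}$.

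It then remains to verify, directly from Definition~\ref{D:ulass}, that $\lambda_{\mathbb{N}^{''}}[L_{n},N]=0$. Fix $\eta>0$ and an arbitrary unit subsequence $(x_{n})_{\mathbb{N}^{'4}} \triangleleft (L_{n})_{\mathbb{N}^{'4}}$. For each $n$ pick $y_{n}\in N$ with $\|x_{n}-y_{n}\|<2/n$, which is possible by the distance estimate above. Choosing $z=\theta$ and $\mathbb{N}^{'5}:=\mathbb{N}^{'4}$, one obtains $\varlimsup_{n\in\mathbb{N}^{'5}}\|x_{n}-y_{n}-z\|=0\leq\eta$. Since $\eta>0$ was arbitrary, $\lambda_{\mathbb{N}^{''}}[L_{n},N]=0$, and hence $\mathcal{S}\mathcal{S}\lambda[M,N]=0$.

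The argument is short because all the work has been pushed onto the classical characterization of strict singularity via small-norm infinite-dimensional restrictions; I do not anticipate any real obstacle beyond invoking this characterization, which delivers exactly the closed infinite-dimensional subspaces $L_{n}$ that the definition of strictly singular $\lambda-$adjustment requires, together with the elementary identification of the quotient norm with the distance to $N$.
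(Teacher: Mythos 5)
Your proof is correct, but it takes a genuinely different route from the paper's. The paper deduces the result by first applying its Lemma~\ref{ssboiissa} (strict singularity of a bounded operator implies strictly singular $0-$adjustment with the null operator, proved via Schechter's measure $\Delta(A)$) to the operator $Q_N\circ J_M$, obtaining $\mathcal{SS}\lambda[Q_N\circ J_M,\theta]=0$ --- a statement about graphs in the product space $M\times(X/N)$ --- and then asserts that the desired statement about the pair $(M,N)$ in $X$ follows by ``straight application of the definition of $\lambda-$adjustment to the graph,'' a translation step it does not spell out. You bypass the graph formalism and Lemma~\ref{ssboiissa} entirely: you invoke the classical Kato characterization of strict singularity (for every infinite-dimensional closed $Z$ and every $\varepsilon>0$ there is an infinite-dimensional closed $Z'\subseteq Z$ with $\|A|_{Z'}\|<\varepsilon$), which is equivalent to Schechter's $\Delta(A)=0$, to produce the subspaces $L_n\subseteq K_n$ directly, and then exploit the identity $\|Q_N x\|_{X/N}=\mathrm{dist}(x,N)$ to verify $\lambda_{\mathbb{N}^{''}}[L_n,N]=0$ from Definition~\ref{D:ulass} in one step. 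The underlying nontrivial fact (small-norm infinite-dimensional restrictions exist) is the same in both proofs, but your version is more self-contained and makes fully explicit the passage from the operator-level statement to the subspace-level statement that the paper leaves to the reader.
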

\begin{proof}
Let $Q_{N}$ be the canonical surjection from $X$ onto a quotient space $X/N$ and $J_{M}$ be the identity injection from $M$ into $X$. Then by the lemma's condition operator ${Q_{N} \circ J_{M} \in \mathcal{S}\mathcal{S}(M, X/N)}$. Therefore, according to the previous Lemma \ref{ssboiissa} ${\mathcal{S}\mathcal{S}\lambda[Q_{N} \circ J_{M}, \theta] = 0}$. By the straight application of definition of $\lambda-$adjustment to the graph of ${Q_{N} \circ J_{M}}$ it is easy to see that ${\mathcal{S}\mathcal{S}\lambda[M,N] = 0}$.
\end{proof}

\begin{remark}\label{r:1}
It is easy to see that if ${(C_{n})_{\mathbb{N}^{'}} \subset \mathcal{B}(X,Y)}$ is a sequence of finite rank operators, then for any pair of sequences of operators ${(A_{n})_{\mathbb{N}^{'}}, (B_{n})_{\mathbb{N}^{'}} \subset \mathcal{C}(X,Y)}$
\begin{multline}
\notag
\begin{aligned}
\mathcal{S}\mathcal{S}\lambda_{\mathbb{N}^{'}}[A_{n}, B_{n}]\            =\ \mathcal{S}\mathcal{S}\lambda_{\mathbb{N}^{'}}[A_{n} + C_{n}, B_{n}],\\ 
\end{aligned}
\end{multline}
If in addition the ranks of all ${C_{n}}$ are limited from above, then
\begin{multline}
\notag
\begin{aligned}
\mathcal{F}\mathcal{S}\mathcal{S}\lambda_{\mathbb{N}^{'}}[A_{n}, B_{n}]\ =\ \mathcal{F}\mathcal{S}\mathcal{S}\lambda_{\mathbb{N}^{'}}[A_{n} + C_{n}, B_{n}].
\end{aligned}
\end{multline}
Also if ${\left\|C_{n}\right\| \rightarrow 0}$, then 
\[
\mathcal{S}\mathcal{S}\lambda_{\mathbb{N}^{'}}[A_{n}, B_{n}]\            =\ \mathcal{S}\mathcal{S}\lambda_{\mathbb{N}^{'}}[A_{n} + C_{n}, B_{n}],\ \ 
\mathcal{F}\mathcal{S}\mathcal{S}\lambda_{\mathbb{N}^{'}}[A_{n}, B_{n}]\ =\ \mathcal{F}\mathcal{S}\mathcal{S}\lambda_{\mathbb{N}^{'}}[A_{n} + C_{n}, B_{n}].
\]
Also it is easy to see that if ${A \in \mathcal{B}(X, Y)}$, ${(B_{n})_{\mathbb{N}^{'}} \subset \mathcal{B}(X,Y)}$ and ${\mathcal{F}\mathcal{S}\mathcal{S}\lambda_{\mathbb{N}^{'}}[B_{n}, \theta] = 0},$\linebreak then ${\mathcal{F}\mathcal{S}\mathcal{S}\lambda_{\mathbb{N}^{'}}[A + B_{n}, A] = 0}$.
\end{remark}
Next we prove the following:
\begin{lemma}[Finitely Strict Singularity of a Bounded Operator Implies its Finitely Strictly Singular $0-$Adjustment with a Null Operator]\label{fssboiifssa}
Let ${X,Y}$ be two Banach spaces and ${A \in \mathcal{FSS}(X,Y)}$. Then ${\mathcal{F}\mathcal{S}\mathcal{S}\lambda[A, \theta] = 0}$.
\end{lemma}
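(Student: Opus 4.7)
The plan is to reduce the claim to Milman's classical refinement of the defining property of $\mathcal{FSS}$ (which produces small-norm subspaces of arbitrarily large prescribed dimension), use the natural identification between the graph $G_{A}$ and the domain $X$, and then bound $\lambda_{\mathbb{N}^{'''}}[L_{n}, G_{\theta}]$ directly from Definition \ref{D:ulass}.

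First I unfold the definitions. Because $A_{n} = A$ and $B_{n} = \theta$ are constant, the graph sequences are constant: $G_{A} = \{(x, Ax) \mid x \in X\}$ and $G_{\theta} = X \times \{\theta\}$ in the product space $X \times Y$. Since $A \in \mathcal{B}(X,Y)$, the first-coordinate map $(x, Ax) \mapsto x$ is a bounded linear isomorphism of $G_{A}$ onto $X$. Given an arbitrary $(K_{n})_{\mathbb{N}^{''}} \prec (G_{A})_{\mathbb{N}^{''}}$ with $\dim K_{n} < \infty$ and $\lim_{n \in \mathbb{N}^{''}} \dim K_{n} = \infty$, this isomorphism identifies each $K_{n}$ with a subspace $\tilde{K}_{n} \subset X$ of the same dimension $d_{n}$, where $d_{n} \to \infty$.

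Second, I invoke Milman's theorem for $\mathcal{FSS}$ operators (see \cite{milman} and \cite{Sari_Schlumprecht_Tomczak_Jaegermann_Troitsky}): for every $\varepsilon > 0$ and every $k \in \mathbb{N}$ there exists $n_{0}(\varepsilon,k) \in \mathbb{N}$ such that every subspace $Z \subset X$ with $\dim Z \geq n_{0}(\varepsilon,k)$ contains a subspace $W \subset Z$ with $\dim W = k$ and $\|A\mid_{W}\| < \varepsilon$. Fix $\varepsilon > 0$ and a sequence $k_{n} \to \infty$, and pass to a subsequence $\mathbb{N}^{'''} \subset \mathbb{N}^{''}$ along which $d_{n} \geq n_{0}(\varepsilon, k_{n})$. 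For each such $n$ choose $W_{n} \subset \tilde{K}_{n}$ with $\dim W_{n} = k_{n}$ and $\|A\mid_{W_{n}}\| < \varepsilon$, then set $L_{n} := \{(x, Ax) \mid x \in W_{n}\} \subset K_{n}$, so $(L_{n})_{\mathbb{N}^{'''}} \prec (K_{n})_{\mathbb{N}^{'''}}$ with $\dim L_{n} = k_{n} \to \infty$.

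Third, I estimate $\lambda_{\mathbb{N}^{'''}}[L_{n}, G_{\theta}]$ directly. For any unit subsequence $(u_{n})$ from $(L_{n})$, write $u_{n} = (x_{n}, Ax_{n})$ with $\max(\|x_{n}\|, \|Ax_{n}\|) = 1$; since $x_{n} \in W_{n}$ we have $\|Ax_{n}\| < \varepsilon \|x_{n}\| \leq \varepsilon$. Taking $v_{n} := (x_{n}, \theta) \in G_{\theta}$ and $z := (\theta, \theta)$ makes $u_{n} - v_{n} - z = (\theta, Ax_{n})$, whose product-norm is $\|Ax_{n}\| < \varepsilon$; hence $\lambda_{\mathbb{N}^{'''}}[L_{n}, G_{\theta}] \leq \varepsilon$. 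Thus every $\varepsilon > 0$ belongs to $\mathcal{F}\mathcal{S}\mathcal{S}\Lambda_{\mathbb{N}^{'}}[G_{A}, G_{\theta}]$, and $\mathcal{F}\mathcal{S}\mathcal{S}\lambda[A, \theta] = 0$. The main obstacle is the step that produces finite-dimensional subspaces of prescribed dimension on which $A$ has small norm: the bare definition of $\mathcal{FSS}$ yields only a single small-norm vector per subspace, so without Milman's refinement one could not simultaneously guarantee $\dim L_{n} \to \infty$ and the uniform operator-norm estimate needed to control $\lambda_{\mathbb{N}^{'''}}[L_{n}, G_{\theta}]$.
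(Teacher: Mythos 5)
Your proof is correct and its overall architecture matches the paper's: pass from the graph $G_{A}$ to the domain via the bounded isomorphism $(x,Ax)\mapsto x$, reduce to finding finite-dimensional subspaces of unboundedly large dimension on which $\|A\|$ is uniformly small, form $L_{n}$ as the graphs of the corresponding restrictions, and then read off $\lambda_{\mathbb{N}^{'''}}[L_{n}, G_{\theta}] \leq \varepsilon$ by choosing $(x_{n},\theta)$ and $z=(\theta,\theta)$ in the adjustment estimate --- that final computation is verbatim what the paper does.

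Where you diverge is in how the key intermediate fact is obtained: you cite it as a known refinement of the $\mathcal{FSS}$ definition (attributing it to Milman and the Sari--Schlumprecht--Tomczak-Jaegermann--Troitsky paper), namely that for every $\varepsilon>0$ and every $k$ there is an $n_{0}(\varepsilon,k)$ such that each subspace of dimension $\geq n_{0}$ contains a $k$-dimensional subspace with $\|A\mid_{W}\|<\varepsilon$. The paper instead proves this inline, by an explicit induction: at the $(j{+}1)$-st stage it picks $j{+}1$ unit vectors in a decreasing chain of Hahn--Banach kernels, each with $\|Ax^{i}\|\leq 2^{-(j+1)}\eta$, then uses the coordinate bound $|\alpha_{i}|\leq 2^{i-1}$ (from the norm-one projections onto the nested kernels) to conclude $\|Ax\|\leq \eta 2^{-(j+1)}\sum_{i=1}^{j+1}2^{i-1}<\eta$ on the span. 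The self-contained argument is precisely a proof of the reformulation you cite. Your route is more modular and shorter; the cost is that the burden of verifying the cited reformulation (with the required uniformity in $Z$) shifts to the reader, and you should double-check that the exact statement with uniform $n_{0}(\varepsilon,k)$ is in fact stated in the form you quote in those references --- if not, you would simply reinstate the paper's nested-kernel induction, since it is elementary and proves exactly what you need.
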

\begin{proof}
Consider a number ${\eta > 0}$ and a subsequence of closed subspaces ${(K_{n})_{\mathbb{N}^{''}}}$ from ${G_{A}}$ such that ${\dim K_{n} < \infty}$ for all ${n \in \mathbb{N''}}$ and ${\lim_{n \in \mathbb{N}^{''}} \dim K_{n} = \infty}$ -- our goal is to find a subsequence of closed subspaces $(L_{n})_{\mathbb{N}^{'''}} \prec (K_{n})_{\mathbb{N}^{'''}}$ such that ${\lim_{n \in \mathbb{N}^{'''}} \dim L_{n} = \infty}$ with the property ${\lambda_{\mathbb{N}^{'''}}[L_{n}, \theta] \leq \eta}$. For that purpose define ${D_{n} = \{x \in X \mid (x, Ax) \in K_{n}\}}$ for each ${n \in \mathbb{N''}}$ and suppose that we have found a subsequence of subspaces $(R_{n})_{\mathbb{N}^{'''}} \prec (D_{n})_{\mathbb{N}^{'''}}$ such that ${\lim_{n \in \mathbb{N}^{'''}} \dim R_{n} = \infty}$ and that ${\left\|A\mid_{R_{n}}\right\| \leq \eta}$ for every ${n \in \mathbb{N}^{'''}}$. Then we can define ${L_{n}}$ to be a graph of the restriction of $A$ onto $R_{n}$ -- it is easy to see that ${\lambda_{\mathbb{N}^{'''}}[L_{n}, \theta] \leq \eta}$ since if ${(x_{n}, Ax_{n})_{\mathbb{N}^{'4}} \triangleleft (L_{n})_{\mathbb{N}^{'4}}}$ is a unit sequence then one can choose ${(y_{n})_{\mathbb{N}^{'4}} \subset G_{\theta}}$ as ${(x_{n}, \theta)_{\mathbb{N}^{'4}}}$ and ${z \in (X,Y)}$ as $(\theta, \theta)$ so that for every ${n \in \mathbb{N}^{'4}}$
\[
\left\|(x_{n}, Ax_{n}) - y_{n} - z\right\|\ =\ \left\|(x_{n}, Ax_{n}) - (x_{n}, \theta) - (\theta, \theta)\right\|\ =\ \left\|(\theta, Ax_{n})\right\|\ \leq\ \eta.
\]
The above inequality means that ${\lambda_{\mathbb{N}^{'''}}[L_{n}, \theta] \leq \eta}$. Since $\eta$ can be arbitrarily small, we must conclude that ${\mathcal{F}\mathcal{S}\mathcal{S}\lambda[A, \theta] = 0}$.

Now let us enumerate the elements from ${\mathbb{N}^{''} = \{k_{1} < k_{2} < ... < k_{n} < ...\}}$ and build the subsequence of numbers ${\{r_{1} < r_{2} < ... < r_{n} < ...\} = \mathbb{N}^{'''} \subset \mathbb{N}^{''}}$ and a sequence of needed subspaces ${(R_{n})_{\mathbb{N}^{'''}} \prec (D_{n})_{\mathbb{N}^{'''}}}$ by induction.

Since $A$ is finitely strictly singular, there exists a number ${r \in \mathbb{N}}$ such that for every subspace ${R \subset X}$ with ${\dim R \geq r}$ there exists a unit vector ${x_{r} \in R}$ such that ${\left\|Ax_{r}\right\| \leq \eta}$. Therefore, since ${\lim_{k \in \mathbb{N}^{''}} \dim D_{k} = \infty}$ we can find ${r_{1} = \min\{k \in \mathbb{N}^{''} \mid \dim D_{k} > r\}}$, then we can choose a unit vector ${x_{r_{1}} \in D_{r_{1}}}$ such that ${\left\|Ax_{r_{1}}\right\| \leq \eta}$ and construct a one-dimensional subspace ${R_{r_{1}} = sp\{x_{r_{1}}\} \subset D_{r_{1}}}$.

Suppose we have  constructed $j \geq 1$ numbers ${\{r_{1} < r_{2} < ... < r_{j}\} \subset \mathbb{N}^{''}}$ and $j$ subspaces ${R_{r_{i}} \subset D_{r_{i}}}$ such that ${\left\|A \mid _{R_{r_{i}}}\right\| \leq \eta}$ and ${\dim R_{r_{i}} = i}$ for ${i = 1, 2, ..., j}$. Our goal now is to find a new number ${r_{j+1} \in \mathbb{N}^{''}}$ and a new subspace ${R_{r_{j+1}} \subset D_{r_{j+1}}}$ such that ${r_{j+1} > r_{j}}$, ${\left\|A \mid _{R_{r_{j+1}}}\right\| \leq \eta}$ and\linebreak ${\dim R_{r_{j+1}} = j+1}$.

Since $A$ is finitely strictly singular, there exists a number ${r \in \mathbb{N}}$ such that for every subspace ${R \subset X}$ with ${\dim R \geq r}$ there exists a unit vector ${x_{r} \in R}$ such that ${\left\|Ax_{r}\right\| \leq \eta 2^{-(j+1)}}$. Therefore, since ${\lim_{k \in \mathbb{N}^{''}} \dim D_{k} = \infty}$ we can find ${r_{j+1} = \min\{k \in \mathbb{N}^{''} \mid \dim D_{k} > r + j + 1\}}$, then we can choose a unit vector ${x^{1}_{r_{j+1}} \in D_{r_{j+1}}}$ such that ${\left\|Ax^{1}_{r_{j+1}}\right\| \leq 2^{-(j+1)} \eta}$. By the Hahn-Banach theorem there exists a continuous unit functional ${f_{1} \in D_{r_{j+1}}^{*}}$ such that ${f_{1}x^{1}_{r_{j+1}} = 1}$. The kernel of that functional ${N_{1} = Ker f_{1}}$ has dimension ${\dim D_{k} - 1 > r + j > r}$. Therefore, we may find another unit vector ${x^{2}_{r_{j+1}} \in N_{1}}$ such that ${\left\|Ax^{2}_{r_{j+1}}\right\| \leq 2^{-(j+1)} \eta}$. As before, by the Hahn-Banach theorem there exists a continuous unit functional ${f_{2} \in N_{1}^{*}}$ such that ${f_{2}x^{2}_{r_{j+1}} = 1}$. The kernel of that functional ${N_{2} = Ker f_{2}}$ has dimension ${\dim D_{k} - 2 > r + j - 1}$. Clearly we can continue this at least $j+1$ times thus constructing a set of ${j+1}$ finite-dimensional subspaces ${D_{r_{j+1}} = N_{0} \supset N_{1} \supset ... \supset N_{j}}$ with the set of $j+1$ unit vectors ${x^{i}_{j+1} \in N_{i-1}}$ so that ${N_{i-1} = sp\{x_{i}\} \oplus N_{i}}$ for each ${i = 1,...,j+1}$.

Now define a projection ${P_{i} : N_{i-1} \rightarrow sp\{x_{i}\}}$ with the kernel ${N_{i}}$ for each ${i = 1,...,j+1}$. Finally define a new ${(j+1)-}$dimensional subspace ${R_{r_{j+1}} = sp\{x^{1}_{r_{j}},...,x^{j+1}_{r_{j}}\}}$. Now let us prove that ${\left\|Ax\right\| \leq \eta}$ for any unit vector ${x \in R_{r_{j+1}}}$. For that decompose ${x = \sum \alpha_{i} x^{i}_{r_{j}}}$ into the sum of its $j+1$ coordinates and calculate
\[
\left\|Ax\right\|\ =\ \left\|A(\sum_{i=1}^{j+1} \alpha_{i} x^{i}_{r_{j}})\right\|\ \leq\ \sum_{i=1}^{j+1} \left| \alpha_{i} \right| \left\|Ax^{i}_{r_{j}}\right\|\ \leq\ 2^{-(j+1)} \eta \times \sum_{i=1}^{j+1} \left| \alpha_{i} \right|.
\]
However, it is clear from our construction that ${\left| \alpha_{1} \right| = \left\| P_{1}x \right\| = 1}$ and\linebreak ${\left| \alpha_{i} \right| = \left\|P_{i} \circ (I_{i-1} - P_{i-1}) \circ ... \circ (I_{1} - P_{1}) x\right\|}$ where ${I_{i} : N_{i} \rightarrow N_{i}}$ is the identity map for each\linebreak ${i = 2,...,j+1}$. Therefore, for the same ${i}$
\begin{multline}
\notag
\begin{aligned}
\left| \alpha_{i} \right|\ &\leq\ \left\| P_{i} \right\| \times (\prod_{j=1}^{i-1} \left\| I_{j} - P_{j} \right\|) \times \left\| x \right\|\ =\ 1 \times (\prod_{j=1}^{i-1} \left\| I_{j} - P_{j} \right\|) \times 1\\
                           &\leq\ \prod_{j=1}^{i-1} (\left\| I_{j} \right\| + \left\| P_{j} \right\|)\ =\ \prod_{j=1}^{i-1} (1+1)\ =\ 2^{(i-1)} 
\end{aligned}
\end{multline}
since ${\left\|P_{i}\right\| = 1}$ for those $i$. Thus the previous estimate for ${\left\|Ax\right\|}$ can be further rewritten
\begin{multline}
\notag
\begin{aligned}
\left\|Ax\right\|\ \leq\ 2^{-(j+1)} \eta \times \sum_{i=1}^{j+1} \left| \alpha_{i} \right|\ \leq\ 2^{-(j+1)} \eta  \times \sum_{i=1}^{j+1} 2^{(i-1)}\ =\ \eta \times 2^{-(j+1)} \times (2^{j+1} - 1)\ <\ \eta
\end{aligned}
\end{multline}
and this concludes the entire proof.
\end{proof}

The next lemma shows that operator compositions supply an abundance of operator sequences that are  strictly singular $0-$adjusted with the null operator. This way strictly singular adjustment resembles the ideal properties of strictly singular operators (see P. Aiena \cite{aiena} and J. Diestel, H. Jarchow, A. Pietsch \cite{diestel_jaqrchow_pietsch}).

\begin{lemma}[Strict Singularity of Composition Operator Sequences]\label{ssco}
Let ${W}$, $X$, $Y$ and $Z$ are Banach spaces with three sequences of linear operators ${(S_{n})_{\mathbb{N}^{'}} \subset \mathcal{B}(Y, Z)}$, ${(A_{n})_{\mathbb{N}^{'}} \subset \mathcal{B}(X, Y)}$ and ${(T_{n})_{\mathbb{N}^{'}} \subset \mathcal{B}(W, X)}$:
\[
\begin{CD}
W @>T_{n}>> X @>A_{n}>> Y @>S_{n}>> Z.
\end{CD}
\]
Suppose that ${\varlimsup \left\|S_{n}\right\| = s < \infty}$ and ${\varlimsup \left\|T_{n}\right\| = t < \infty}$. Then the following propositions are true:
\begin{enumerate}
  \item If ${\mathcal{F}\mathcal{S}\mathcal{S}\lambda_{\mathbb{N}^{'}}[A_{n},\ \theta] = 0}$, then ${\mathcal{F}\mathcal{S}\mathcal{S}\lambda_{\mathbb{N}^{'}}[A_{n} \circ T_{n},\ \theta] = \mathcal{F}\mathcal{S}\mathcal{S}\lambda_{\mathbb{N}^{'}}[S_{n} \circ A_{n},\ \theta] = 0}$.
  \item If ${\mathcal{S}\mathcal{S}\lambda_{\mathbb{N}^{'}}[A_{n},\ \theta] = 0}$, then ${\mathcal{S}\mathcal{S}\lambda_{\mathbb{N}^{'}}[A_{n} \circ T_{n},\ \theta] = \mathcal{S}\mathcal{S}\lambda_{\mathbb{N}^{'}}[S_{n} \circ A_{n},\ \theta] = 0}$.
\end{enumerate}
\end{lemma}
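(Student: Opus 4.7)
The plan is to prove all four statements by a common \emph{lift-and-push-down} reduction: given a candidate subspace of the graph of the composition, lift it to a subspace of $G_{A_{n}}$, apply the hypothesis on $(A_{n})$ there, and push the refinement back down, using boundedness of $S_{n}$ or $T_{n}$ to transfer the adjustment bound. I describe $\mathcal{SS}\lambda[S_{n}\circ A_{n},\theta]=0$ in detail, since the other three cases are parallel with obvious adaptations. Fix $\eta>0$ and a subsequence $(K_{n})_{\mathbb{N}^{''}}\prec(G_{S_{n}A_{n}})_{\mathbb{N}^{''}}$ of infinite-dimensional closed subspaces of $X\times Z$. Projecting on the first coordinate gives the closed infinite-dimensional $D_{n}:=\{x\in X\mid(x,S_{n}A_{n}x)\in K_{n}\}$, and we form $\tilde{K}_{n}:=\{(x,A_{n}x)\mid x\in D_{n}\}\subset G_{A_{n}}$. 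By the hypothesis $\mathcal{SS}\lambda_{\mathbb{N}^{'}}[A_{n},\theta]=0$, there exists a further subsequence $\mathbb{N}^{'''}\subset\mathbb{N}^{''}$ and closed infinite-dimensional $\tilde{L}_{n}\subset\tilde{K}_{n}$ with $\lambda_{\mathbb{N}^{'''}}[\tilde{L}_{n},G_{\theta}]\le\eta/(s+1)$. Let $E_{n}$ be the $X$-projection of $\tilde{L}_{n}$ and define $L_{n}:=\{(x,S_{n}A_{n}x)\mid x\in E_{n}\}\subset K_{n}$; this is closed and infinite-dimensional because the correspondence $(x,A_{n}x)\leftrightarrow(x,S_{n}A_{n}x)$ is a continuous linear bijection from $\tilde{L}_{n}$ onto $L_{n}$.

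The heart of the proof is the estimate $\lambda_{\mathbb{N}^{'''}}[L_{n},G_{\theta}]\le\eta$. Given a unit subsequence $(x_{n},S_{n}A_{n}x_{n})\triangleleft(L_{n})$ and $\mu>0$, the element $(x_{n},A_{n}x_{n})\in\tilde{L}_{n}$ has norm $c_{n}:=\max(\|x_{n}\|,\|A_{n}x_{n}\|)>0$; normalizing, the adjustment on $(\tilde{L}_{n})$ supplies $v\in Y$ (depending on the sequence) and a further subsequence with $\varlimsup\|A_{n}x_{n}-c_{n}v\|\le c_{n}(\eta/(s+1)+\mu)$. Applying $S_{n}$ and invoking $\varlimsup\|S_{n}\|=s$ yields $\varlimsup\|S_{n}A_{n}x_{n}-c_{n}S_{n}v\|\le c_{n}\eta+c_{n}(s+1)\mu$. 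The main technical obstacle is that the definition of $\lambda$-adjustment requires a single offset vector $z\in Z$, while $c_{n}S_{n}v$ depends on $n$. I plan to handle this by passing to a further subsequence along which $c_{n}\to c\in[0,+\infty]$: when $c<\infty$, the bound $\|S_{n}A_{n}x_{n}\|\le1$ pins $c_{n}S_{n}v$ into a bounded set and, with a Cauchy-like extraction leveraging the same bound, isolates a single $z\in Z$ with $\varlimsup\|S_{n}A_{n}x_{n}-z\|\le\eta$; when $c=+\infty$, dividing through by $c_{n}$ forces $\|S_{n}v\|$ to be of order $\eta$, which combined with $\|S_{n}A_{n}x_{n}\|\le1$ selects a suitable $z$ (in favourable sub-cases, simply $z=0$).

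The three remaining statements follow by parallel arguments. For the $\mathcal{FSS}\lambda$ versions, each of the objects in the chain $K_{n}\leftrightarrow D_{n}\leftrightarrow\tilde{K}_{n}\leftrightarrow\tilde{L}_{n}\leftrightarrow E_{n}\leftrightarrow L_{n}$ is related by a linear bijection, so the growth condition $\dim\to\infty$ propagates automatically through the construction. For $A_{n}\circ T_{n}$, the dual reduction pushes the $W$-projection $D_{n}$ of $K_{n}\subset G_{A_{n}T_{n}}$ forward through $T_{n}$ to $T_{n}(D_{n})\subset X$ and works inside $G_{A_{n}}$ over $T_{n}(D_{n})$; now $\varlimsup\|T_{n}\|=t$ plays the role of $\varlimsup\|S_{n}\|=s$, and the (possibly nontrivial) kernel $\ker T_{n}\cap D_{n}$ contributes a piece of the graph lying entirely in $G_{\theta}$, on which the adjustment is $0$ trivially (cf.\ Remark~\ref{r:0}). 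The hardest step in all four cases remains the extraction of the single offset vector $z$ in the final transfer estimate, since neither $S_{n}v$ nor $S_{n}A_{n}x_{n}$ need converge in norm in a general Banach space.
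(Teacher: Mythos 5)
Your lift-and-push-down reduction works cleanly for the first equality (the $A_{n}\circ T_{n}$ direction, both $\mathcal{SS}$ and $\mathcal{FSS}$), because there the offset vector $z$ is produced directly in $Y$ by the adjustment bound on the lifted subspaces $\tilde{L}_{n}\subset G_{A_{n}}$; the boundedness of $T_{n}$ enters only to bound the normalizing scalars $c_{n}=\max(\|T_{n}w_{n}\|,\|A_{n}T_{n}w_{n}\|)\le\max(t,1)$, and a subsequence $c_{n}\to c$ then yields a fixed offset $z=cv$. This matches the paper. But the case you chose to detail, $\mathcal{SS}\lambda[S_{n}\circ A_{n},\theta]$, has a genuine gap that you flag but never close: your candidate offset $c_{n}S_{n}v$ depends on $n$ through $S_{n}$, and a bounded sequence $(S_{n}v)$ in a general Banach space $Z$ has no reason to admit a convergent (or even Cauchy) subsequence, so there is no ``Cauchy-like extraction.'' In the $c=\infty$ sub-case, dividing by $c_{n}$ shows only that $\varlimsup\|S_{n}v\|$ is small, which bounds $\|c_{n}S_{n}v\|$ by a quantity of order $c_{n}$, not of order $\eta$; combined with $\|S_{n}A_{n}x_{n}\|\le 1$ this does not yield $\varlimsup\|S_{n}A_{n}x_{n}-z\|\le\eta$ for any single $z$. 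As written, the $S_{n}\circ A_{n}$ direction does not go through.

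The paper avoids this by a structurally different intermediate step that your proposal is missing. From $\lambda[\hat{A}_{n},\theta]\le\gamma$ it first establishes, via a contradiction argument using nested Hahn--Banach hyperplanes $Y_{n_{1}}\supset Y_{n_{2}}\supset\dots$ and $\varepsilon$-separated images $y_{n_{k}}=\hat{A}_{n_{k}}x_{n_{k}}$ inside them, that there exist subspaces $X_{n}\subset\mathrm{dom}(\hat{A}_{n})$ of finite codimension with $\varlimsup\|\hat{A}_{n}\mid_{X_{n}}\|\le\varepsilon$. Once the \emph{operator norm} of the restriction is small, composing with $S_{n}$ is immediate ($\varlimsup\|S_{n}\hat{A}_{n}\mid_{X_{n}}\|\le s\varepsilon$), and the adjustment offset can simply be taken as $z=\theta$, so the $n$-dependence problem never arises. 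This ``small $\lambda$-adjustment to $\theta$ implies small restricted operator norm on a subspace of finite codimension'' lemma is the essential content of the $S_{n}\circ A_{n}$ case, and without it (or a replacement) your approach cannot be completed.
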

\begin{proof}
Suppose that ${\mathcal{F}\mathcal{S}\mathcal{S}\lambda_{\mathbb{N}^{'}}[A_{n},\ \theta] = 0}$ and let us prove that ${\mathcal{F}\mathcal{S}\mathcal{S}\lambda_{\mathbb{N}^{'}}[A_{n} \circ T_{n},\ \theta] = 0}$. Consider ${\varepsilon > 0}$ and a subsequence of finite-dimensional subspaces ${(K_{n})_{\mathbb{N}^{''}} \prec (G_{A_{n} \circ T_{n}})_{\mathbb{N}^{''}}}$ from the product space ${W \times Y}$ such that ${\dim K_{n} \rightarrow \infty}$ -- we need to find a subsequence of subspaces ${(L_{n})_{\mathbb{N}^{'''}} \prec (K_{n})_{\mathbb{N}^{''}}}$ such that ${\dim L_{n} \rightarrow \infty}$ and ${\lambda_{\mathbb{N}^{''}}[L_{n}, \{\theta\}] \leq \varepsilon}$. Define subspaces ${Y_{n}}$ as images of natural projections from ${K_{n}}$ into $Y$ and ${W_{n}}$ as images of natural projections from ${K_{n}}$ into $W$ for each ${n \in \mathbb{N}^{''}}$. Then, since ${\dim K_{n} \rightarrow \infty}$, it must be that either ${\varlimsup \dim Y_{n} < \infty}$ or ${\varlimsup \dim Y_{n} = \infty}$. In the former case each $K_{n}$ becomes a graph of a finite rank operator for large enough ${n \in \mathbb{N}^{''}}$, so define each ${L_{n}}$ as a graph of such finite rank operator restricted to its kernel -- obviously kernel dimensions approach infinity, therefore ${\varlimsup \dim L_{n} \rightarrow \infty}$ and at the same time each $L_{n}$ is a graph of a null operator. Therefore ${\lambda_{\mathbb{N}^{''}}[L_{n}, \{\theta\}] = 0 < \varepsilon}$.

In the latter case of ${\dim Y_{n} \rightarrow \infty}$ consider subspaces ${P_{n} \subset G_{A_{n}}}$ defined as the graphs of restriction operators ${A_{n} \mid _{A_{n}^{-1}(Y_{n}) \cap W_{n}}}$ for each ${n \in \mathbb{N}^{''}}$ -- from assumption it is obvious that ${\dim P_{n} \rightarrow \infty}$. Therefore, since ${\mathcal{F}\mathcal{S}\mathcal{S}\lambda_{\mathbb{N}^{'}}[A_{n},\ \theta] = 0}$, there exists a subsequence of subspaces ${(Q_{n})_{\mathbb{N}^{'''}} \prec (P_{n})_{\mathbb{N}^{'''}}}$ such that ${\dim Q_{n} \rightarrow \infty}$ and that ${\lambda_{\mathbb{N}^{'''}}[Q_{n}, {\theta}] \leq \varepsilon \beta}$ where ${\beta = \frac{1}{\max(t, 1)}}$. Let ${R_{n}}$ be an image of a natural projection from ${Q_{n}}$ into $X$ -- obviously ${\dim R_{n} \rightarrow \infty}$. Now define ${S_{n} = W_{n} \cap T_{n}^{-1}(R_{n}) \subset W}$ and consider restriction operators ${V_{n} = A_{n} \circ T_{n} \mid _{S_{n}}}$ for every ${n \in \mathbb{N}^{'''}}$\ --\ obviously ${\dim G_{V_{n}} \rightarrow \infty}$ since ${\dim R_{n} \rightarrow \infty}$; also from our construction ${(G_{V_{n}})_{\mathbb{N}^{'''}} \prec (K_{n})_{\mathbb{N}^{'''}}}$. We shall prove that\linebreak ${\lambda_{\mathbb{N}^{'''}}[G_{V_{n}}, \{\theta\}] \leq \varepsilon}$\ --\ this will mean that ${\mathcal{F}\mathcal{S}\mathcal{S}\lambda_{\mathbb{N}^{'}}[A_{n} \circ T_{n},\ \theta] = 0}$. Let ${(w_{n}, A_{n} \circ T_{n} w_{n})_{\mathbb{N}^{'4}} \triangleleft (G_{V_{n}})_{\mathbb{N}^{'4}}}$ be a unit sequence of vectors. Let ${\alpha_{n} = \max(\left\|T_{n} w_{n}\right\|, \left\|A_{n} \circ T_{n} w_{n}\right\|)}$ for each ${n \in \mathbb{N}^{'4}}$ and consider a unit sequence ${(\alpha_{n}^{-1} T_{n} w_{n}, \alpha_{n}^{-1} A_{n} \circ T_{n} w_{n})_{\mathbb{N}^{'4}} \triangleleft (Q_{n})_{\mathbb{N}^{'4}}}$. Since ${\lambda_{\mathbb{N}^{'''}}[Q_{n}, \{\theta\}] \leq \varepsilon \beta}$ there exists a subsequence ${\mathbb{N}^{'5}}$ and a vector ${y \in Y}$ such that ${\varlimsup_{n \in \mathbb{N}^{'5}} \left\|\alpha_{n}^{-1} A_{n} \circ T_{n} w_{n} - y \right\| \leq \varepsilon \beta}$. By construction ${\alpha_{n} \leq \max(t, 1) = \beta^{-1}}$, hence there exists a subsequence ${(\alpha_{n})_{\mathbb{N}^{'6}} \rightarrow \alpha \leq \beta^{-1}}$. Therefore ${\varlimsup_{n \in \mathbb{N}^{'6}} \left\|A_{n} \circ T_{n} w_{n} - \alpha y \right\| \leq \varepsilon}$ which proves that ${\lambda_{\mathbb{N}^{'''}}[G_{V_{n}}, \{\theta\}] \leq \varepsilon}$.

In order to prove that ${\mathcal{F}\mathcal{S}\mathcal{S}\lambda_{\mathbb{N}^{'}}[S_{n} \circ A_{n},\ \theta] = 0}$ from the second equality let ${\omega > 0}$,\linebreak ${\varepsilon \in (0, \min(1, \omega s^{-1}))}$, then choose three positive numbers ${\gamma}$, ${\eta}$ and ${\nu}$ so that ${\gamma + \eta + \nu < \varepsilon2^{-1}}$ and consider a subsequence of subspaces ${(K_{n})_{\mathbb{N}^{''}}}$ from ${(G_{S_{n} \circ A_{n}})_{\mathbb{N}^{''}}}$ such that ${\dim K_{n} < \infty}$ for all ${n \in \mathbb{N''}}$ and ${\lim_{n \in \mathbb{N}^{''}} \dim K_{n} = \infty}$ -- our goal is to find a subsequence of subspaces $(L_{n})_{\mathbb{N}^{'''}} \prec (K_{n})_{\mathbb{N}^{'''}}$ such that ${\lim_{n \in \mathbb{N}^{'''}} \dim L_{n} = \infty}$ with the property ${\lambda_{\mathbb{N}^{'''}}[L_{n}, \{\theta\}] \leq \omega}$. Let ${X_{n}}$ be the image of a natural projection of ${K_{n}}$ onto $X$ -- obviously ${\dim X_{n} \rightarrow \infty}$. Let ${B_{n}}$ be a restriction of ${A_{n}}$ onto ${X_{n}}$. Obviously ${\dim G_{B_{n}} \rightarrow \infty}$. Therefore there exists a subsequence of subspaces ${(H_{n})_{\mathbb{N}^{'''}} \prec (G_{B_{n}})_{\mathbb{N}^{'''}}}$ such that ${\dim H_{n} \rightarrow \infty}$ and that ${\lambda_{\mathbb{N}^{'''}}[H_{n}, \{\theta\}] \leq \gamma}$. Obviously each $H_{n}$ is a graph of some operator ${\hat{A}_{n}}$ which is a restriction of the original operator $A_{n}$ so that ${\lambda_{\mathbb{N}^{'''}}[\hat{A}_{n}, \theta] \leq \gamma}$. As the main step in our proof we will establish existence of a subsequence of subspaces ${(X_{n})_{\mathbb{N}^{'4}} \prec (dom(\hat{A}_{n}))_{\mathbb{N}^{'4}}}$ such that ${\varlimsup \dim dom (\hat{A}_{n}) / X_{n} < \infty}$ and that ${\varlimsup \left\|\hat{A}_{n} \mid _{X_{n}} \right\| \leq \varepsilon}$. Suppose to the contrary that such a subsequence does not exist -- we shall then build a sequence of unit vectors ${(x_{n})_{\mathbb{N}^{'4}} \triangleleft (dom (\hat{A}_{n}))_{\mathbb{N}^{'4}}}$ and a sequence of subspaces ${(Y_{n})_{\mathbb{N}^{'4}} \subset Y}$ by induction like this:
\begin{itemize}
\item According to our assumption there exists a unit vector ${x_{n_{1}} \in dom(\hat{A}_{n_{1}})}$ such that\linebreak ${\left\|\hat{A}_{n_{1}}x_{n_{1}}\right\| \geq \varepsilon}$ for some ${n_{1} \in \mathbb{N}^{'''}}$. Using Hahn-Banach theorem find a unit functional ${f_{n_{1}} \in Y^{*}}$ such that ${f_{n_{1}}(\hat{A}_{n_{1}}x_{n_{1}}) = \left\|\hat{A}_{n_{1}}x_{n_{1}}\right\|}$. Then let ${y_{n_{1}} = \hat{A}_{n_{1}}x_{n_{1}}}$ and ${Y_{n_{1}} = Ker(f_{n_{1}})}$\ --\ obviously ${dist(\beta y_{n_{1}}, Y_{n_{1}}) \geq \beta \varepsilon}$ for any ${\beta > 0}$ and ${dist(\left\|y_{n_{1}}\right\|^{-1} y_{n_{1}}, Y_{n_{1}}) = 1}$.
\item Suppose that for some ${k > 1}$ we have already built a set of unit vector ${\{x_{n_{1}},...,x_{n_{k}}\} \subset X}$ and a set of subspaces ${Y \supset Y_{n_{1}} \supset ... \supset Y_{n_{k}}}$ such that ${x_{n_{i}} \in dom (\hat{A}_{n_{i}})}$, ${y_{n_{i}} = \hat{A}_{n_{i}}x_{n_{i}}}$,\linebreak ${Y_{n_{i}} = sp\{y_{n_{i+1}}\} \oplus Y_{n_{i+1}}}$ and ${dist(\beta y_{n_{i}}, Y_{n_{i}}) \geq \beta \varepsilon}$ for any ${\beta > 0}$ and ${dist(\left\|y_{n_{i}}\right\|^{-1} y_{n_{i}}, Y_{n_{i}}) = 1}$ for all ${i = 1,...,k-1}$. Obviously ${\dim dom(\hat{A}_{n_{k}} /  \hat{A}_{n_{k}}^{-1}(Y_{n_{k}})) \leq n_{k}}$ for each ${n \in \mathbb{N}^{'''}}$ -- therefore by assumption there exists some ${n_{k+1} \in \mathbb{N}^{'''}}$, ${n_{k+1} > n_{k}}$ and a unit vector ${x_{n_{k+1}} \in \hat{A}_{n_{k}}^{-1}(Y_{n_{k}})}$ such that ${\left\|\hat{A}_{n_{k+1}}x_{n_{k+1}}\right\| \geq \varepsilon}$. Denote ${y_{n_{k+1}} = \hat{A}_{n_{k+1}}x_{n_{k+1}}}$\ --\ by the Hahn-Banach theorem there exists a unit functional ${f_{n_{k+1}} \in Y_{n_{k}}^{*}}$ such that ${f_{n_{k+1}}y_{n_{k+1}} = \left\|y_{n_{k+1}}\right\|}$, so set\linebreak ${Y_{n_{k+1}} = Ker(f_{n_{k+1}})}$. Obviously ${Y_{n_{k}} = sp\{y_{n_{k+1}}\} \oplus Y_{n_{k+1}}}$ and ${dist(\beta y_{n_{k+1}}, Y_{n_{k+1}}) \geq \beta \varepsilon}$ for any ${\beta > 0}$ and ${dist(\left\|y_{n_{k+1}}\right\|^{-1} y_{n_{k+1}}, Y_{n_{k+1}}) = 1}$.
\end{itemize}
Now define a sequence of just built numbers ${\mathbb{N}^{'4} = \{n_{1},...,n_{k},...\}}$. Then for each ${n \in \mathbb{N}^{'4}}$ define ${\beta_{n} = \max(1, \left\|y_{n}\right\|)}$ and consider a unit sequence
\[
((\beta_{n}^{-1}x_{n}, \beta_{n}^{-1}\hat{A}_{n}x_{n}))_{\mathbb{N}^{'4}}\ =\ ((\beta_{n}^{-1}x_{n}, \beta_{n}^{-1}y_{n}))_{\mathbb{N}^{'4}}\ \subset\ X \times Y.
\]
Since ${\lambda_{\mathbb{N}^{'''}}[\hat{A}_{n}, \theta] \leq \gamma}$ there exists a subsequence ${\mathbb{N}^{'5}}$ and a vector ${y \in Y}$ such that
\[
\varlimsup_{\mathbb{N}^{'5}} \left\|\beta_{n}^{-1}y_{n} - y \right\|\ \leq\ \gamma + \eta.
\]
Applying the triangle inequality for large enough ${n > m}$ from ${\mathbb{N}^{'5}}$ and taking into account the choice of numbers ${\gamma}$, ${\eta}$ and ${\nu}$ estimate
\begin{multline}
\notag
\begin{aligned}
\left\|\beta_{m}^{-1}y_{m} - \beta_{n}^{-1}y_{n}\right\|\ &=\ \left\|\beta_{m}^{-1}y_{m} - y - \beta_{n}^{-1}y_{n} + y\right\|\ \leq\ \left\|\beta_{m}^{-1}y_{m} - y\right\| + \left\|\beta_{n}^{-1}y_{n} - y\right\|\\
                                                          &\leq\ \gamma + \eta + \nu + \gamma + \eta + \nu\ =\ 2 \times(\gamma + \eta + \nu)\ <\ \varepsilon.                                                     
\end{aligned}
\end{multline}
At the same time by construction
\[
\left\|\beta_{m}^{-1}y_{m} - \beta_{n}^{-1}y_{n}\right\|\ \geq\ \beta_{m}^{-1} \times \varepsilon.
\]
Thus from the above two inequalities obtain
\[
\beta_{m}^{-1} \times \varepsilon\ <\ \varepsilon.
\]
If ${\left\|y_{m}\right\| \leq 1}$ then ${\beta_{m} = 1}$, therefore ${\varepsilon < \varepsilon}$ which is a contradiction. So it must be that ${\left\|y_{m}\right\| > 1}$ -- then it follows that ${\beta_{m} = \left\|y_{m}\right\|}$ and therefore from the construction
\[
\varepsilon\ >\ \left\|\beta_{m}^{-1}y_{m} - \beta_{n}^{-1}y_{n}\right\|\ =\ \left\|\left\|y_{m}\right\|^{-1}y_{m} - \beta_{n}^{-1}y_{n}\right\|\ \geq\ 1
\]
which contradicts our choise of ${\varepsilon < 1}$. Thus we have proved that our assumption is incorrect, and, therefore there exists a subsequence of subspaces ${(X_{n})_{\mathbb{N}^{'''}} \prec dom(\hat{A}_{n})_{\mathbb{N}^{'''}}}$ such that ${\varlimsup \dim dom (\hat{A}_{n}) / X_{n} < \infty}$ and that ${\varlimsup \left\|\hat{A}_{n} \mid _{X_{n}} \right\| \leq \varepsilon}$. Therefore, ${\varlimsup \left\|S_{n} \circ \hat{A}_{n} \mid _{X_{n}} \right\| \leq \omega}$ by the choice of ${\varepsilon < \omega s^{-1}}$ and due to the condition ${\varlimsup \left\|S_{n}\right\| = s < \infty}$. Thus, if we choose ${L_{n} = G_{S_{n} \circ \hat{A}_{n} \mid _{X_{n}}}}$ it is clear that $(L_{n})_{\mathbb{N}^{'''}} \prec (K_{n})_{\mathbb{N}^{'''}}$, ${\lim_{n \in \mathbb{N}^{'''}} \dim L_{n} = \infty}$ and ${\lambda_{\mathbb{N}^{'''}}[L_{n}, \{\theta\}] \leq \omega}$ which concludes the proof of the second equality from proposition $1$.

We omit the proof of the proposition $2$ as it is practically the same as the just concluded proof of the proposition $1$.
\end{proof}

\section[Semi-Fredholm Stability]{Semi-Fredholm Stability}\label{S:sfsulssa}
In this section we prove stability of lower semi-Fredholm pairs and of operators under lower strictly singular $\lambda-$adjustment. Let us start with some preliminaries and a few lemmas that will be used in that proof.
\subsection[Preliminary Lemmas]{Preliminary Lemmas}\label{SS:pl}
The following concept of gap between two subspaces of a Banach space had been introduced  by M. G. Krein, M. A. Krasnosel'ski\~{\i}, D. P. Mil'man in \cite{krein_krasnoselskii_milman} (see also M. I. Ostrovskii \cite{ostrovskii} and T. Kato \cite{kato}) -- it can be seen as a measure of an 'angle' between two subspaces:
\begin{definition}[Gap between Two Subspaces of a Banach Space]\label{bgtsbs}
Recall that if $x$ is a vector from a Banach space $X$, and $P$ is a closed subspace of $X$, then \emph{distance from $x$ to $P$} is defined as ${dist(x, P) := \inf \{ \left\| x - y \right\| \mid y \in P \}}$; if $M$, $P$ are two closed subspaces of a Banach space $X$ then the $gap$\ $\delta(M, P)$\ $between$\ $M$\ $and$\ $P$ is a real non-negative number defined as
\[ 
\delta(M, P)\ :=\ \sup \{ dist(x, P) \mid x \in M ,\ \left\| x \right\| = 1 \}.
\]
\end{definition}
The concept of uniform $\lambda-$adjustment is weaker than the concept of gap distance (see \cite{burshteyn})
\[
	\lambda_{\mathbb{N}^{'}}[M_{n}, P_{n}]\ \le\ \varlimsup_{n \in \mathbb{N}^{'}}  \delta(M_{n}, P_{n}).
\]
The following theorem due to I.C. Gohberg, M.G. Krein from \cite{gohberg_krein} had become a fundamental tool in the research of (semi-)Fredholm stability:
\begin{theorem}[The Small Gap Theorem]\label{tsgp}
Let $M$ and $N$ be two subspaces from a Banach space such that ${\delta(M, N) < 1}$. If ${\dim M < \infty}$ then ${\dim M \leq \dim N}$.
\end{theorem}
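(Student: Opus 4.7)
The plan is to proceed by contrapositive: assume $\dim M < \infty$ and $\dim M > \dim N$, and deduce that $\delta(M, N) \geq 1$. Since $\dim M$ is finite and strictly exceeds $\dim N$, we automatically have $\dim N < \infty$ as well; set $m = \dim M$ and $n = \dim N$, so $n < m$. All relevant objects then sit inside the finite-dimensional Banach subspace $M + N \subset X$, where unit spheres are compact.

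The heart of the proof is to construct, for each $\varepsilon > 0$, a continuous odd map $\tilde\varphi_\varepsilon \colon S_M \to N$ (where $S_M$ denotes the unit sphere of $M$) such that $\|x - \tilde\varphi_\varepsilon(x)\| \leq \delta(M,N) + \varepsilon$ for every $x \in S_M$. For each $x \in S_M$ I would pick $y_x \in N$ with $\|x - y_x\| < \delta(M,N) + \varepsilon/2$; by continuity of the norm, the near-approximation inequality (with $\varepsilon$ in place of $\varepsilon/2$) persists on a small open neighborhood $U_x$ of $x$ in $S_M$. Compactness of $S_M$ yields a finite subcover $U_{x_1}, \ldots, U_{x_k}$, and a continuous partition of unity $\{\psi_i\}$ subordinate to it lets me glue the $y_{x_i}$ together into a continuous map $\varphi_\varepsilon(x) = \sum_i \psi_i(x)\, y_{x_i}$ taking values in the subspace $N$ and satisfying $\|x - \varphi_\varepsilon(x)\| \leq \delta(M,N) + \varepsilon$. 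Symmetrizing by $\tilde\varphi_\varepsilon(x) = \tfrac{1}{2}\bigl(\varphi_\varepsilon(x) - \varphi_\varepsilon(-x)\bigr)$ produces a continuous, odd map into $N$; the triangle inequality together with $dist(-x, N) = dist(x, N)$ preserves the same bound.

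The conclusion then follows from the Borsuk--Ulam theorem. Identifying $S_M$ topologically with $S^{m-1}$ and $N$ linearly with $\mathbb{R}^n$, then extending by zero coordinates through a linear embedding $\mathbb{R}^n \hookrightarrow \mathbb{R}^{m-1}$, I obtain a continuous odd map $S^{m-1} \to \mathbb{R}^{m-1}$. Classical Borsuk--Ulam supplies a point $x_\varepsilon \in S_M$ with $\tilde\varphi_\varepsilon(x_\varepsilon) = 0$, whence $1 = \|x_\varepsilon\| = \|x_\varepsilon - \tilde\varphi_\varepsilon(x_\varepsilon)\| \leq \delta(M,N) + \varepsilon$. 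Letting $\varepsilon \to 0^{+}$ forces $\delta(M,N) \geq 1$, contradicting the hypothesis $\delta(M,N) < 1$.

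The main obstacle is the continuous-selection step, because the metric-projection multifunction $x \mapsto \{y \in N : \|x - y\| = dist(x,N)\}$ is in general only upper semi-continuous, so Michael's selection theorem does not directly produce a single-valued continuous selection. The partition-of-unity construction bypasses this by aiming only at a continuous $\varepsilon$-approximate selection, which suffices because the final inequality is required only up to arbitrarily small $\varepsilon$. A more elementary linearization attempt---defining $A \colon M \to N$ by specifying $A$ on a basis of $M$---fails because linear extension amplifies the per-basis approximation error by the sum of the absolute values of coordinates, which cannot in general be controlled by the ambient norm; the topological argument above is precisely what is needed to avoid this blow-up.
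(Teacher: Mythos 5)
The paper states this theorem without proof, attributing it to Gohberg--Krein; your argument is a complete and correct proof, and it is in fact the classical one (the Borsuk antipodal-map argument that appears in Krein--Krasnosel'ski\u{\i}--Mil'man and in Gohberg--Krein). The partition-of-unity step cleanly circumvents the failure of lower semicontinuity of the metric projection, the symmetrization $\tilde\varphi_\varepsilon(x) = \tfrac{1}{2}\bigl(\varphi_\varepsilon(x) - \varphi_\varepsilon(-x)\bigr)$ correctly produces an odd map into $N$ while preserving the $\delta(M,N)+\varepsilon$ bound, and Borsuk--Ulam applied to the induced map $S^{m-1} \to \mathbb{R}^{n} \hookrightarrow \mathbb{R}^{m-1}$ (valid since $n \leq m-1$) yields $1 \leq \delta(M,N) + \varepsilon$, giving the contrapositive. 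One small point worth making explicit: if the Banach space is over $\mathbb{C}$, one should pass to the underlying real structure, so that $S_M \cong S^{2m-1}$ and $N \cong \mathbb{R}^{2n}$ with $2n \leq 2m-2 < 2m-1$; the antipodal symmetry $x \mapsto -x$ is real-linear, so Borsuk--Ulam applies unchanged. Your closing remark about why a naive linear interpolation on a basis fails is accurate and explains why the topological step is genuinely necessary.
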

This theorem is used in order to prove the following technical lemma:
\begin{lemma}\label{253}
Let ${(M_{n})_{\mathbb{N}^{'}}}$ and ${(N_{n})_{\mathbb{N}^{'}}}$ be two sequences of subspaces from a Banach space. Suppose that ${\lim_{n \in \mathbb{N}^{'}} \delta(M_{n}, N_{n}) = 0}$ and ${\lim_{n \in \mathbb{N}^{'}} \dim M_{n} = \infty}$. Then there exist two subsequences of subspaces ${(H_{n})_{\mathbb{N}^{'}_{1}} \prec (M_{n})_{\mathbb{N}^{'}_{1}}}$ and ${(G_{n})_{\mathbb{N}^{'}_{1}} \prec (N_{n})_{\mathbb{N}^{'}_{1}}}$ such that ${\dim H_{n} = \dim G_{n}}$ for all ${n \in \mathbb{N}^{'}_{1}}$, ${\lim_{n \in \mathbb{N}^{'}_{1}} \dim H_{n} = \infty}$ and ${\lim_{n \in \mathbb{N}^{'}_{1}} \delta(H_{n}, G_{n}) = \lim_{n \in \mathbb{N}^{'}_{1}} \delta(G_{n}, H_{n}) = 0}$.
\begin{proof}
Since ${\lim_{n \in \mathbb{N}^{'}} \dim M_{n} = \infty}$ we can choose a subsequence ${\mathbb{N}^{''} = \{n_{1}  < ... < n_{k} < ...\} \subset \mathbb{N}^{'}}$ such that ${\dim M_{n_{k}} \geq k}$ for each ${k \in \mathbb{N}}$. Since ${\lim_{n \in \mathbb{N}^{'}} \delta(M_{n}, N_{n}) = 0}$ we can choose a subsequence ${\mathbb{N}^{'''} = \{m_{1} < m_{2} < ... < m_{k} < ...\} \subset \mathbb{N}^{''}}$ such that ${\dim M_{m_{k}} \geq k}$ and
\[
\delta(M_{m_{k}}, N_{m_{k}})\ <\ (k+1)^{-1} \times 2^{-(k+1)}
\]
for each ${k \in \mathbb{N}}$. 

Since ${\dim M_{m_{k}} \geq k}$ we can choose a unit vector ${x_{m_{k}}^{1} \in M_{m_{k}}}$ and since\linebreak ${\delta(M_{m_{k}}, N_{m_{k}}) < (k+1)^{-1} \times 2^{-(k+1)}}$ we can choose a vector ${y_{m_{k}}^{1} \in N_{m_{k}}}$ such that
\[
\left\|x_{m_{k}}^{1} - y_{m_{k}}^{1}\right\|\ <\ (k+1)^{-1} \times 2^{-(k+1)}
\]
for each ${k \geq 2}$. By the Hahn-Banach theorem there exists a unit functional ${f^{1}_{k} \in M_{m_{k}}^{*}}$ such that ${\| f^{1}_{k}x_{m_{k}}^{1} \| = 1}$ with the kernel space ${K_{m_{k}}^{1} \subset M_{m_{k}}}$. Obviously ${M_{m_{k}} = sp\{x_{m_{k}}^{1}\} \oplus K_{m_{k}}^{1}}$, so we can define a projection ${P_{k}^{1} : M_{m_{k}} \rightarrow sp\{x_{m_{k}}^{1}\}}$ such that ${\left\|P_{k}^{1}\right\| = 1}$, ${P_{k}^{1}x = (f^{1}_{k}x) x}$ and ${Ker(P_{k}^{1}) = K_{m_{k}}^{1}}$ for each ${k \geq 2}$. 

Obviously ${\dim K_{m_{k}}^{1} = k - 1}$ for each ${k \geq 2}$. Therefore, for each ${k \geq 3}$ we can apply the same reasoning and choose a unit vector ${x^{2}_{m_{k}} \in K_{m_{k}}^{1}}$ and a vector ${y_{m_{k}}^{2} \in N_{m_{k}}}$ such that\linebreak ${\left\|x_{m_{k}}^{2} - y_{m_{k}}^{2}\right\| < (k+1)^{-1} \times 2^{-(k+1)}}$. Also, applying the Hahn-Banach theorem we can find a unit functional ${f^{2}_{k} \in K^{1*}_{m_{k}}}$ such that ${\| f^{2}_{k}x_{m_{k}}^{2} \| = 1}$ with the kernel space ${K_{m_{k}}^{2} \subset M_{m_{k}}}$. Obviously ${K^{1}_{m_{k}} = sp\{x_{m_{k}}^{2}\} \oplus K_{m_{k}}^{2}}$, so we can define a projection ${P_{k}^{2} : K_{m_{k}}^{1} \rightarrow sp\{x_{m_{k}}^{2}\}}$ such that ${\left\|P_{k}^{2}\right\| = 1}$, ${P_{k}^{2}x = (f^{2}_{k}x) x}$ and ${Ker(P_{k}^{2}) = K_{m_{k}}^{2}}$ for each ${k \geq 3}$. 

Naturally following this process we end up with the following objects for each number ${k \geq 2}$: a set of linearly independent unit vectors ${\{x^{1}_{m_{k}}, ..., x^{k}_{m_{k}}\} \in M_{m_{k}}}$ and a set of vectors\linebreak ${\{y^{1}_{m_{k}}, ..., y^{k}_{m_{k}}\} \in N_{m_{k}}}$ such that
\[
\left\|x_{m_{k}}^{i} - y_{m_{k}}^{i}\right\|\ <\ (k+1)^{-1} \times 2^{-(k+1)}
\]
for each ${i = 1, ..., k}$, as well as the set of unit projections
\[
P^{i}_{m_{k}} : sp\{x^{i}_{m_{k}}, ..., x^{k}_{m_{k}}\} \rightarrow sp\{x^{i}_{m_{k}}\}
\]
such that ${ker(P^{i}_{m_{k}}) = sp\{x^{i+1}_{m_{k}}, ..., x^{k}_{m_{k}}\}}$ for each ${i = 1, ..., k-1}$. Let ${I^{i}_{m_{k}}}$ be a an identity operator for the space ${sp\{x^{i+1}_{m_{k}}, ..., x^{k}_{m_{k}}\}}$ for each ${i = 1, ..., k-1}$ and consider any vector\linebreak ${x = \sum_{i=1}^{k}\alpha_{i}x_{m_{k}}^{i} \in sp\{x^{1}_{m_{k}}, ..., x^{k}_{m_{k}}\}}$. Estimate each ${\alpha_{i}}$:
\begin{multline}
\notag
\begin{aligned}
| \alpha_{i} |\ &=\ \left\|P^{i}_{m_{k}} \circ (I^{i-1}_{m_{k}} - P^{i-1}_{m_{k}}) \circ ... \circ (I^{1}_{m_{k}} - P^{1}_{m_{k}}) x \right\|\\
                &\leq\ \left\|P^{i}_{m_{k}}\right\| \times \left\|\prod_{j=1}^{i-1} (I^{i-j}_{m_{k}} - P^{i-j}_{m_{k}})\right\| \times \left\| x \right\|\ \leq\ 1 \times (\prod_{j=1}^{i-1} \left\| I^{i-j}_{m_{k}} - P^{i-j}_{m_{k}} \right\|) \times \left\| x \right\|\\
                &\leq\ (\prod_{j=1}^{i-1} (\left\| I^{i-j}_{m_{k}} \right\| + \left\| P^{i-j}_{m_{k}} \right\| )) \times \left\| x \right\|\ \leq\ (\prod_{j=1}^{i-1} 2 ) \times \left\| x \right\|\ <\  2^{i} \times \left\| x \right\|;       
\end{aligned}
\end{multline}
therefore
\begin{multline}\label{E:1}
\begin{aligned}
&\sum_{i=1}^{k} |\alpha_{i}|\ <\ (\sum_{i=1}^{k} 2 ^{i} ) \times \left\| x \right\|\ <\ 2^{k+1} \times \left\| x \right\|,\\
&\frac{\sum_{i=1}^{k} |\alpha_{i}|}{2^{k+1}}\ <\ \left\| x \right\|.
\end{aligned}
\end{multline}
Now consider pairs of subspaces ${H_{m_{k}} = sp\{x^{1}_{m_{k}}, ..., x^{k}_{m_{k}}\} \subset M_{m_{k}}}$ and ${G_{m_{k}} = sp\{y^{1}_{m_{k}}, ..., y^{k}_{m_{k}}\} \subset N_{m_{k}}}$ and estimate the gap ${\delta(G_{m_{k}}, H_{m_{k}})}$ between $G_{m_{k}}$ and $H_{m_{k}}$. Take a unit vector ${y \in G_{m_{k}}}$ -- let us calculate $y$'s distance from ${H_{m_{k}}}$. If ${y = \sum_{i=1}^{k} \alpha_{i} y^{i}_{m_{k}}}$ then take a vector from ${x \in H_{m_{k}}}$ such that ${x = \sum_{i=1}^{k} \alpha_{i} x^{i}_{m_{k}}}$ and evaluate its norm:
\begin{multline}\label{E:2}
\begin{aligned}
\left\|x\right\|\ &=\ \left\|x + y - y\right\|\ =\ \left\|\sum_{i=1}^{k} \alpha_{i} x^{i}_{m_{k}} + \sum_{i=1}^{k} \alpha_{i} y^{i}_{m_{k}} - \sum_{i=1}^{k} \alpha_{i} y^{i}_{m_{k}}\right\|\\
                  &\leq\ \left\|\sum_{i=1}^{k} \alpha_{i} y^{i}_{m_{k}}\right\|\ +\ \left\|\sum_{i=1}^{k} \alpha_{i} x^{i}_{m_{k}} - \sum_{i=1}^{k} \alpha_{i} y^{i}_{m_{k}}\right\|\ =\ \left\|y\right\|\ +\ \left\|\sum_{i=1}^{k} \alpha_{i}(x^{i}_{m_{k}} - y^{i}_{m_{k}})\right\|\\
                  &\leq\ 1\ +\ \sum_{i=1}^{k} |\alpha_{i}| \left\|x^{i}_{m_{k}} - y^{i}_{m_{k}}\right\|\ \leq\ 1\ +\ (\sum_{i=1}^{k} |\alpha_{i}|) \times (k+1)^{-1}2^{-(k+1)}.
\end{aligned}
\end{multline}
Combining estimates for $x$'s norm from \eqref{E:1} and \eqref{E:2} calculate further:
\begin{multline}\label{E:3}
\begin{aligned}
&\frac{\sum_{i=1}^{k} |\alpha_{i}|}{2^{k+1}}\ <\ 1\ +\ (\sum_{i=1}^{k} |\alpha_{i}|) \times (k+1)^{-1}2^{k+1},\\
&(\sum_{i=1}^{k} |\alpha_{i}|) \times (2^{-(k+1)} - (k+1)^{-1}2^{-(k+1)})\ <\ 1,\\
&\sum_{i=1}^{k} |\alpha_{i}|\ <\ \frac{2^{k+1}(k+1)}{k}.
\end{aligned}
\end{multline}
Finally we can estimate using \eqref{E:3} 
\begin{multline}
\notag
\begin{aligned}
dist(y, H_{m_{k}})\ &\leq\ \left\|y - x\right\|\ =\ \left\|\sum_{i=1}^{k} \alpha_{i} y^{i}_{m_{k}} - \sum_{i=1}^{k} \alpha_{i} x^{i}_{m_{k}}\right\|\ \leq\ \sum_{i=1}^{k} |\alpha_{i}|\left\|y_{i} - x_{i}\right\|\\
                   &\leq\ (\sum_{i=1}^{k} |\alpha_{i}|) \times (k+1)^{-1}2^{-(k+1)}\ <\ \frac{2^{k+1}(k+1)}{k} \times (k+1)^{-1}2^{-(k+1)}\ =\ \frac{1}{k},\\
\end{aligned}
\end{multline}
therefore
\begin{multline}\label{E:4}
\begin{aligned}
\delta(G_{m_{k}}, H_{m_{k}})\ <\ \frac{1}{k}.
\end{aligned}
\end{multline}
Now estimate the opposite gap ${\delta(H_{m_{k}}, G_{m_{k}})}$ by taking a unit vector ${x = \sum_{i=1}^{k} \alpha_{i}x^{i}_{m_{k}} \in H_{m_{k}}}$ and evaluating its difference with ${y = \sum_{i=1}^{k} \alpha_{i}y^{i}_{m_{k}} \in G_{m_{k}}}$ using \eqref{E:1}:
\begin{multline}
\notag
\begin{aligned}
dist(x, G_{m_{k}})\ &\leq\ \left\|x - y\right\|\ =\ \left\|\sum_{i=1}^{k} \alpha_{i} x^{i}_{m_{k}} - \sum_{i=1}^{k} \alpha_{i} y^{i}_{m_{k}}\right\|\ \leq\ \sum_{i=1}^{k} |\alpha_{i}|\left\|x_{i} - y_{i}\right\|\\
                   &\leq\ (\sum_{i=1}^{k} |\alpha_{i}|) \times (k+1)^{-1}2^{-(k+1)}\ <\ 2^{k+1} \times (k+1)^{-1}2^{-(k+1)}\ =\ (k+1)^{-1},
\end{aligned}
\end{multline}
therefore
\begin{multline}\label{E:5}
\begin{aligned}
\delta(H_{m_{k}}, G_{m_{k}})\ <\ \frac{1}{k+1}.
\end{aligned}
\end{multline}
Now denote ${\mathbb{N}^{'}_{1} := \mathbb{N}^{'''} = \{m_{1} < m_{2} < ... < m_{k} < ...\}}$. Then from \eqref{E:4} and \eqref{E:5} it follows that
\[
\lim_{n \in \mathbb{N}^{'}_{1}} \delta(H_{n}, G_{n})\ =\ \lim_{n \in \mathbb{N}^{'}_{1}} \delta(G_{n}, H_{n})\ =\ 0.
\]
From the above formula and from the fact that ${\dim H_{m_{k}} = k}$, as well as from the small gap theorem \ref{tsgp} applied to pairs of subspaces ${(H_{m_{k}}, G_{m_{k}})}$ it follows that
\[
\dim H_{m_{k}}\ =\ \dim G_{m_{k}}\ =\ k
\]
for ${k \in \mathbb{N}}$\ --\ this concludes the proof of the lemma. 
\end{proof}
\end{lemma}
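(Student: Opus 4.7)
The plan is to build, for each $n$ in a suitable subsequence, a pair of finite-dimensional subspaces $H_n \subset M_n$ and $G_n \subset N_n$ of the same and growing dimension, by constructing a ``quasi-orthogonal'' system of unit vectors in $M_n$ and transferring them to close vectors in $N_n$. First, I would extract a subsequence $\{m_k\} \subset \mathbb{N}'$ along which $\dim M_{m_k} \ge k$ and $\delta(M_{m_k}, N_{m_k}) < \varepsilon_k$, with $\varepsilon_k$ chosen to decay fast enough to beat a $2^k$-type amplification that will appear in the coefficient estimates; a choice of the order $\varepsilon_k = (k+1)^{-1}2^{-(k+1)}$ is what I would aim for.

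Next, for each $m_k$ I would inductively construct vectors $x^1_{m_k}, \ldots, x^k_{m_k} \in M_{m_k}$ as follows: pick any unit $x^1_{m_k} \in M_{m_k}$, apply Hahn--Banach to produce a norm-one functional $f^1_k$ on $M_{m_k}$ with $f^1_k(x^1_{m_k}) = 1$, and take $K^1_{m_k} = \ker f^1_k$, which has codimension $1$ in $M_{m_k}$ and hence dimension at least $k-1$. Choosing a unit $x^2_{m_k} \in K^1_{m_k}$ and iterating yields the desired system together with norm-one projections $P^i_{m_k}$ of $\mathrm{span}\{x^i_{m_k}, \ldots, x^k_{m_k}\}$ onto $\mathrm{span}\{x^i_{m_k}\}$ with kernel $\mathrm{span}\{x^{i+1}_{m_k}, \ldots, x^k_{m_k}\}$. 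Using $\delta(M_{m_k}, N_{m_k}) < \varepsilon_k$, I would then pick $y^i_{m_k} \in N_{m_k}$ with $\|x^i_{m_k} - y^i_{m_k}\| < \varepsilon_k$, and set $H_{m_k} := \mathrm{span}\{x^i_{m_k}\}_{i=1}^k$ and $G_{m_k} := \mathrm{span}\{y^i_{m_k}\}_{i=1}^k$.

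The estimates then go as follows. For $x = \sum_{i=1}^k \alpha_i x^i_{m_k} \in H_{m_k}$, writing $\alpha_i$ as the action on $x$ of a composition of the $P^i$ and $(\mathrm{Id} - P^j)$ gives $|\alpha_i| \le 2^i \|x\|$ and hence $\sum |\alpha_i| \le 2^{k+1}\|x\|$. For $\delta(H_{m_k}, G_{m_k})$, the obvious approximant $y = \sum \alpha_i y^i_{m_k}$ gives $\|x-y\| \le \varepsilon_k \sum |\alpha_i| \to 0$ by the choice of $\varepsilon_k$. The reverse gap $\delta(G_{m_k}, H_{m_k})$ is more delicate: given a unit $y = \sum \alpha_i y^i_{m_k} \in G_{m_k}$, one does not control $|\alpha_i|$ through $\|y\|$ directly, so I would set $x := \sum \alpha_i x^i_{m_k}$, bound $\|x\| \le 1 + \varepsilon_k \sum |\alpha_i|$ from the triangle inequality, and combine with $\sum |\alpha_i| \le 2^{k+1}\|x\|$ to solve the implicit inequality for $\sum |\alpha_i|$. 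This yields $\sum |\alpha_i| = O(k)$ and hence $\|y - x\| = O(1/k)$, so the reverse gap also tends to zero.

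The main obstacle is precisely this bootstrap in the reverse direction: it forces the decay rate of $\varepsilon_k$ to dominate both the coefficient amplification $2^{k+1}$ and the self-referential bound for $\sum|\alpha_i|$, and this is what dictates the specific choice $\varepsilon_k \asymp 2^{-k}/k$. Once both gaps have been shown to tend to zero, the small gap theorem \ref{tsgp}, applied to the finite-dimensional pair $(H_{m_k}, G_{m_k})$ with $\dim H_{m_k} = k$, gives $\dim G_{m_k} = k$ as well, and defining $\mathbb{N}'_1 := \{m_k\}$ completes the proof.
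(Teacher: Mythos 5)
Your proposal follows essentially the same route as the paper: the same choice of decay rate $\varepsilon_k = (k+1)^{-1}2^{-(k+1)}$, the same Hahn--Banach construction of a quasi-orthogonal system $\{x^i_{m_k}\}$ with norm-one projections $P^i_{m_k}$, the same coefficient amplification bound $\sum_i |\alpha_i| < 2^{k+1}\|x\|$, the same bootstrap for the reverse gap, and the same appeal to the Small Gap Theorem to equalize dimensions. One small slip in the prose: solving the implicit inequality gives $\sum_i|\alpha_i| < 2^{k+1}(k+1)/k = O(2^k)$, not $O(k)$; the desired $\|y-x\|=O(1/k)$ still follows because $\varepsilon_k$ carries a $2^{-(k+1)}$ factor that cancels the exponential, so the argument is intact.
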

The next lemma investigates the structure of a pair of subspaces which sum is not closed:
\begin{lemma}[Having a Not Closed Sum is an Inherent Property]\label{hncsip}
Let ${(M, N)}$ be a pair of closed subspaces from a Banach space such that they have no common non-null elements, i.e. ${M \cap N = \{\theta\}}$, and such that the sum of $M$ and $N$ is not closed, i.e. ${M+N \neq \overline{M+N}}$. Then there exists a pair ${(H, G)}$ of infinite-dimensional closed subspaces ${H \subset M}$, ${G \subset N}$ such that for any infinite-dimensional subspace ${K \subset H}$ there exists an infinite-dimensional subspace ${L \subset G}$ such that the sum of $K$ and $L$ is not closed, i.e. ${K+L \neq \overline{K+L}}$. Moreover for any ${\varepsilon > 0}$ one can choose subspaces $H$ and $G$ so that ${\max(\delta(H,G), \delta(G,H)) < \varepsilon}$; also subspace $L$ can be chosen so that ${\max(\delta(K,L), \delta(L,K)) < \varepsilon}$.
\end{lemma}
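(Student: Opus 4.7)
The strategy is to build by a Mazur-style extraction a basic sequence of unit vectors $(x_n)\subset M$ paired with $(y_n)\subset N$ so that $\|x_n-y_n\|$ is very small, to take $H:=\overline{\mathrm{sp}}(x_n)$ and $G:=\overline{\mathrm{sp}}(y_n)$, and then to transfer the non-closedness of $M+N$ into any infinite-dimensional $K\subset H$ by a gliding-hump argument against the basis of $H$.

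The starting point is the operator $T:M\to X/N$, $T(x)=x+N$, which is injective because $M\cap N=\{\theta\}$ and has non-closed range by assumption, so $\inf\{\|Tx\|:x\in M,\|x\|=1\}=0$. This failure is inherited by every finite-codimensional subspace $M_F\subset M$: if $T|_{M_F}$ were bounded below then $T(M_F)$ would be closed, and adding the finite-dimensional image of a complement would force $T(M)$ to be closed, contradicting the hypothesis. Hence inside every finite codimensional subspace of $M$ there exist unit vectors arbitrarily close to $N$.

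Fix a summable sequence $(\delta_n)$ with $\sum_n\delta_n$ as small as will be needed. Construct inductively: pick a unit $x_1\in M$ with $\|x_1-y_1\|<\delta_1$ for some $y_1\in N$. At step $n+1$, having chosen $x_1,\ldots,x_n$ and Mazur functionals $g_1,\ldots,g_n\in X^*$ which ensure the partial sequence is basic with basis constant at most $2$, apply the preceding paragraph inside $M\cap\bigcap_{i\le n}\ker g_i$ to pick a unit $x_{n+1}$ with $\|x_{n+1}-y_{n+1}\|<\delta_{n+1}$ for some $y_{n+1}\in N$. Set $H:=\overline{\mathrm{sp}}(x_n)\subset M$ and $G:=\overline{\mathrm{sp}}(y_n)\subset N$; both are infinite dimensional, and $(x_n)$ is a Schauder basis of $H$ with biorthogonals $f_i\in H^*$ of norm at most $4$. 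By the standard small-perturbation-of-basis theorem, the map
\[
\Phi:H\to X,\qquad \Phi\Bigl(\sum_i a_i x_i\Bigr)=\sum_i a_i y_i,
\]
is bounded, has image exactly $G$, is an isomorphism onto $G$, and satisfies $\|\Phi-J_H\|<\varepsilon$ (where $J_H:H\to X$ is the inclusion) provided $\sum_n\delta_n$ was chosen small enough. This yields $\delta(H,G)\le\|\Phi-J_H\|<\varepsilon$ and, applied to $\Phi^{-1}$, $\delta(G,H)<\varepsilon/(1-\varepsilon)$.

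Now fix any infinite-dimensional closed $K\subset H$ and set $L:=\Phi(K)\subset G$; since $\Phi$ is an isomorphism onto its image, $L$ is closed and infinite-dimensional. For each $m$, the subspace $K\cap\bigcap_{i<N_m}\ker f_i$ has finite codimension in $K$, so for any prescribed $N_m\uparrow\infty$ we may pick a unit $k_m$ in it; then $k_m=\sum_{i\ge N_m}f_i(k_m)x_i$, and
\[
\|k_m-\Phi(k_m)\|=\Bigl\|\sum_{i\ge N_m}f_i(k_m)(x_i-y_i)\Bigr\|\le 4\sum_{i\ge N_m}\delta_i\ \xrightarrow[m\to\infty]{}\ 0.
\]
Since $\Phi(k_m)\in L$, this gives $\mathrm{dist}(k_m,L)\to 0$ on a unit sequence in $K$, while $K\cap L\subset M\cap N=\{\theta\}$, so the sum map $K\oplus L\to K+L$ is not bounded below and $K+L$ fails to be closed. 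The gap bounds $\delta(K,L)\le\|\Phi-J_H\|<\varepsilon$ and $\delta(L,K)\le\varepsilon/(1-\varepsilon)$ come from the same $\Phi$. The main obstacle is the inductive construction of $(x_n),(y_n)$: one must coordinate three constraints---keeping the partial sequences basic with a controlled constant, controlling the size of $\|x_n-y_n\|$ against the biorthogonals, and exploiting the uniform failure of $T$ to be bounded below on cofinite-dimensional subspaces of $M$. Once those are arranged, the gliding-hump step in the last paragraph is essentially automatic.
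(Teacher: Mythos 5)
Your proposal is correct and takes essentially the same approach as the paper: both build an almost-identity isomorphism $\Phi$ between closed spans $H\subset M$ and $G\subset N$ of paired sequences $(x_n)\subset M$, $(y_n)\subset N$ with rapidly decaying $\|x_n-y_n\|$, set $L=\Phi(K)$, and then show non-closedness of $K+L$ by finding unit vectors of $K$ in deeper and deeper tail subspaces of $H$. The only cosmetic difference is that the paper uses a Hahn--Banach flag $M=M_1\supset M_2\supset\cdots$ with norm-one coordinate projections (so the ``coordinate'' norms grow like $2^i$, compensated by $\|x_i-y_i\|<2^{-2i}$), whereas you invoke a Mazur extraction to keep the basis constant bounded; either bookkeeping yields $\|\Phi-J_H\|<\varepsilon$ and the gliding-hump estimate.
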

\begin{proof}
First we build by induction a system of the following objects: a sequence of decreasing closed subspaces ${M = M_{1} \supset M_{2} \supset ... \supset M_{n} \supset ...}$\ and a sequence of unit vectors ${(x_{n})_{\mathbb{N}} \triangleleft (M_{n})_{\mathbb{N}}}$ such that ${M_{i} = sp\{x_{i}\} \oplus M_{i+1}}$ and ${dist(x_{i}, M_{i+1}) = 1}$ for each ${i \in \mathbb{N}}$, as well as a sequence of vectors ${(y_{i})_{\mathbb{N}} \subset N}$ such that ${\left\|x_{i} - y_{i}\right\| < 2^{-2i}}$ for each ${i \in \mathbb{N}}$. For that consider a continuous linear map ${\Psi_{1} : M \times N \rightarrow M + N}$ defined as ${\Psi_{1}:(x,y) \mapsto x-y}$. Since ${M \cap N = \{\theta\}}$ and ${M+N \neq \overline{M+N}}$ we conclude that $\Psi_{1}$ is an injection and that the image of $\Psi_{1}$ is not closed; therefore, by the open mapping theorem for every ${\varepsilon > 0}$ there exists a unit pair ${(x,y) \in M \times N}$ such that ${\left\|x\right\| = 1}$ and ${\left\|x-y\right\| < \varepsilon}$. Thus, for the first induction step choose a unit pair ${(x_{1},y_{1}) \in M \times N}$ such that ${\left\|x_{1}\right\| = 1}$ and ${\left\|x_{1}-y_{1}\right\| < 2^{-2}}$; then denote ${M_{1} = M}$ and by the Hahn-Banach theorem choose a closed subspace ${M_{2}}$ as the kernel of a continuous unit functional ${f_{1} \in M_{1}^{*}}$ such that ${f_{1}x_{1} = 1}$. Now suppose that the first $n+1$ closed spaces ${M_{i}}$ and the first $n$ vectors ${x_{i}}$, ${y_{i}}$ have been already built. Since ${\dim M / M_{n} = n}$ and ${M+N \neq \overline{M+N}}$ we shall conclude that ${M_{n}+N \neq \overline{M_{n}+N}}$. Then, again as in the first step, with the help of the open mapping theorem applied to the natural linear map ${\Psi_{n} : M_{n} \times N \rightarrow M_{n} + N}$ defined as ${\Psi_{n}:(x,y) \mapsto x-y}$ we might choose a unit vector ${x_{n+1} \in M_{n}}$ and a vector ${y_{n+1} \in N}$ such that ${\left\|x_{n+1}-y_{n+1}\right\| < 2^{-2(n+1)}}$. Then by the Hahn-Banach theorem choose a closed subspace ${M_{n+2}}$ as the kernel of a continuous unit functional ${f_{n+1} \in M_{n+1}^{*}}$ such that ${f_{n+1}x_{n+1} = 1}$ which concludes the induction step.

After that, define the sequence of subspaces ${M \supset S_{1} \supset S_{2} \supset ... \supset S_{n} \supset ...}$\ as\linebreak ${S_{n} = sp\{x_{n}, x_{n+1}, ...\} \subset M_{n}}$; then define identity injection maps ${I_{n} : S_{n} \rightarrow X}$ and linear maps ${A_{n} : S_{n} \rightarrow N}$ by setting ${A_{n}: \sum \alpha_{j}x_{j} \mapsto \sum \alpha_{j}y_{j}}$. Our goal is to prove that each operator ${A_{n}}$ is a continuous isomorphism and that ${\left\|I_{n} - A_{n}\right\| < 2^{-2n}}$. For that consider the sequence of projections ${P_{n} : M_{n} \rightarrow sp\{x_{n}\}}$ with the kernels ${Ker(P_{n}) = M_{n+1}}$ for each ${n \in \mathbb{N}}$ -- from the above construction it is clear that ${\left\|P_{n}\right\| = 1}$ for each projection ${P_{n}}$. Then for each vector ${x = \sum_{i=0}^{k} \alpha_{n+i}x_{n+i} \in S_{n}}$ estimate its components ${\alpha_{n+i}}$ for each ${i = 0,...,k}$ like this:
\begin{multline}
\notag
\begin{aligned}
|\alpha_{n+i}|\ &=\ \left\|P_{n} \circ (I_{n} - P_{n+1}) \circ ... \circ (I_{n+i-1} - P_{n+i}) x\right\|\\
                &\leq\ \left\|P_{n}\right\| \times \left\| \prod_{j=0}^{i-1}(I_{n+j} - P_{n+j+1})\right\| \times \left\|x\right\|\ =\ 1 \times \left\|\prod_{j=0}^{i-1}(I_{n+j} - P_{n+j+1})\right\| \times \left\|x\right\|\\
                &\leq\ (\prod_{j=0}^{i-1}\left\|I_{n+j} - P_{n+j+1}\right\|) \times \left\|x\right\|\ \leq\ (\prod_{j=0}^{i-1}(\left\|I_{n+j}\right\| + \left\|P_{n+j+1}\right\|)) \times \left\|x\right\|\\
                &=\ (\prod_{j=0}^{i-1}2) \times \left\|x\right\|\ =\ 2^{i-1} \times \left\|x\right\|.
\end{aligned}
\end{multline}
Based on that get the desired estimate ${\left\|I_{n} - A_{n}\right\| < 2^{-2n}}$: 
\begin{multline}
\notag
\begin{aligned}
\left\|x - A_{n}x\right\|\ &=\ \left\|\sum_{i=0}^{k} \alpha_{n+i}x_{n+i} - \sum_{i=0}^{k} \alpha_{n+i}y_{n+i}\right\|\ \leq\ \sum_{i=0}^{k} |\alpha_{n+i}| \times(\left\|x_{n+i} - y_{n+i}\right\|)\\
                           &\leq\ \sum_{i=0}^{k} |\alpha_{n+i}| \times 2^{-2(n+i)}\ \leq\ \sum_{i=0}^{k} 2^{i-1} \times \left\|x\right\| \times 2^{-2(n+i)}\ =\ (\sum_{i=0}^{k} 2^{-2n -i -1}) \times \left\| x \right\|\\
                           &=\ 2^{-2n-1} \times (\sum_{i=0}^{k}2^{-i}) \times \left\|x\right\|\ <\ 2^{-2n-1} \times 2 \times \left\|x\right\|\ =\ 2^{-2n} \times \left\|x\right\|.
\end{aligned}
\end{multline}
Since all ${I_{n}}$ and ${A_{n}}$ are bounded linear operators each defined on ${S_{n}}$ and since $N$ is a closed subspace, we may extend each of them by continuity onto ${\overline{S_{n}}}$ (the closure of ${S_{n}}$): ${I_{n}}$ to identity operators ${\hat{I}_{n} : \overline{S_{n}} \rightarrow \overline{S_{n}}}$ and ${A_{n}}$ to ${\hat{A}_{n} : \overline{S_{n}} \rightarrow N}$ for each ${n \in \mathbb{N}}$. Obviously the just proved estimate remains true for the extensions, that is ${\left\|\hat{I}_{n} - \hat{A}_{n}\right\| < 2^{-2n}}$. Also note that it follows from the properties of the above inductive construction that ${\overline{S_{1}} = sp\{x_{1},...,x_{n}\} \oplus \overline{S_{n}}}$ which implies that ${\dim \overline{S_{1}} / \overline{S_{n}} = n}$.

Now for every ${n \in \mathbb{N}}$ denote ${H_{n} = \overline{S_{n}} \subset M}$ and ${G_{n} = \hat{A}_{n}(H_{n}) \subset N}$. We shall prove that if ${2^{-n} < \varepsilon}$ then the pair of subspaces ${(H_{n}, G_{n})}$ satisfies the conditions of the lemma. First note that $H_{n}$ is an infinite-dimensional closed subspace by the previous construction. At the same time ${\left\|\hat{I}_{n} - \hat{A}_{n}\right\| < 2^{-2n}}$ while ${\hat{I}_{n}}$ is an isomorphism of norm $1$, therefore due to the stability of the isomorphism perturbed by a small norm operator we conclude that ${\hat{A}_{n}}$ is also an isomorphism and
\begin{multline}\label{E:1111}
\begin{aligned}
\delta(H_{n}, G_{n})\ &=\ \delta(\hat{I}_{n}(H_{n}), \hat{A}_{n}(H_{n}))\ \leq\ \left\|\hat{I}_{n} - \hat{A}_{n}\right\|\ \leq\ 2^{-2n}\ <\ 2^{-n}\ <\ \varepsilon,\\
\delta(G_{n}, H_{n})\ &=\ \delta(\hat{A}_{n}^{-1}(H_{n}), \hat{I}_{n}(H_{n}))\ \leq\ \left\|\hat{I}_{n} - \hat{A}_{n}^{-1}\right\|\ \leq\ \frac{\left\|\hat{I}_{n} - \hat{A}_{n}\right\|}{\left\|\hat{A}_{n}\right\|}\ \leq\ \frac{2^{-2n}}{1 - 2^{-2n}}\ <\ 2^{-n}\ <\ \varepsilon.
\end{aligned}
\end{multline}
Therefore ${G_{n} = \hat{A}_{n}(H_{n})}$ is a closed subspace and ${\max(\delta(H_{n}, G_{n}), \delta(G_{n}, H_{n})) < \varepsilon}$.

Now consider any infinite-dimensional closed subspace ${K \subset H_{n}}$ -- our goal is to find an infinite-dimensional subspace ${L \subset G_{n}}$ such that the sum of $K$ and $L$ is not closed and\linebreak ${\max(\delta(K,L), \delta(L,K)) < \varepsilon}$. For that choose $L$ to be the image of $K$, that is set ${L = \hat{A}_{n}(K)}$ -- since ${\hat{A}_{n}}$ is an isomorphism, $L$ is a closed infinite-dimensional subspace from $G_{n} \subset N$ and the gap estimate ${\max(\delta(K,L), \delta(L,K)) < \varepsilon}$ is true as in \eqref{E:1111}. 

Now let us prove that ${K+L \neq \overline{K+L}}$. Since ${K \cap L = \{\theta\}}$ we may consider, as in the above inductive construction, a natural linear injection ${\Psi : K \times L \rightarrow K + L}$ defined as ${\Psi:(x,y) \mapsto x-y}$. According to open mapping theorem it is sufficient to prove that for every ${\gamma> 0}$ there exist a unit pair vector ${(x,y) \in K \times L}$ such that ${\left\|x - y\right\| < \gamma}$. Let ${m > n}$, ${m \in \mathbb{N}}$ such that ${2^{-2m} < \gamma}$. Consider the space ${K_{m} = \overline{{S}_{m}} \cap K}$. Since ${\dim K = \infty}$, ${K \subset \overline{S_{n}}}$ and by construction\linebreak ${\dim \overline{S_{n}} / \overline{S_{m}} = m - n < \infty}$, it is clear that ${\dim K_{m} = \dim \overline{S_{m}} \cap K = \infty}$. Therefore, we may choose any unit vector ${(x, \hat{A}_{m}x) \in K_{m} \times \hat{A}_{m}(K_{m}) \subset K \times L}$ and let ${y = \hat{A}_{m}x \in L}$. By the previous norm estimate
\[
\left\|x - y\right\|\ =\ \left\|x - \hat{A}_{m}x\right\|\ \leq\ \left\|\hat{I}_{m} - \hat{A}_{m}\right\| \times \left\|x\right\|\ <\ 2^{-2m} \times 1\ \leq\ \gamma
\]
which concludes the proof of the entire lemma.
\end{proof}
\subsection[The Main Stability Theorem]{The Main Stability Theorem}\label{tmst}
At this point we have gathered almost all the technical facts needed for the proof of semi--Fredholm stability. As the final preliminary step we recall some technical considerations from \cite{burshteyn}. Let $M$, $N$ and $S$ be three closed subspaces from a Banach space $X$ such that[3] ${M + N = \overline{M + N}}$, ${M+N = (M \cap N) \oplus S}$. Consider a natural mapping
\[
\Phi\ :\ \Pi\ =\ M \cap S\ \times\ M \cap N\ \times\ N \cap S\ \rightarrow\ M + N\\
\]
\[
\Phi\ :\ (u, t, v)\ \rightarrow\ u + t + v.
\]
Define a complete norm on ${\Pi}$
\[
\left\|(u, t, v)\right\|\ =\ \max \{\left\|u\right\|,\left\|t\right\|,\left\|v\right\|\}.
\]
Then, since
\[
\overline{M + N}\ =\ M + N\ =\ (M \cap N) \oplus S\ =\ M \cap S\ \oplus\ M \cap N\ \oplus\ N \cap S,
\]
it is clear that $\Phi$ is a continuous bijection from the Banach space ${\Pi}$ onto the Banach space ${M + N}$. Therefore, according to the open mapping theorem operator ${\Phi^{-1}}$ is a continuous operator. Let us denote ${\varphi_{S}(M, N)\ :=\ \left\|\Phi^{-1}\right\|}$.

Also recall that a pair of closed subspaces $(M,N)$ from a Banach space $X$ is called \emph{lower semi--Fredholm} if the sum of $M$ and $N$ is closed, i.e. ${M+N = \overline{M+N}}$, and if its \emph{lower defect number} ${\alpha(M,N) := \dim(M \cap N)}$ is finite. Note that for a lower semi--Fredholm pair there always exist many closed subspaces $S$ such that ${M+N = (M \cap N) \oplus S}$. Therefore, denote
\[
\varphi(M, N)\ :=\ \inf \{ \varphi_{S}(M, N)\ \mid\ M+N = (M \cap N) \oplus S\}.
\]
What follows is the final technical lemma which proof can be found in Theorem $2.5.1$ from \cite{burshteyn}:
\begin{lemma}\label{ftl}
Let $M$, $N$ be two closed subspaces from a Banach space $X$ such that\linebreak ${M + N = \overline{M + N}}$. Let ${(M_{n})_{\mathbb{N}^{'}}}$, ${(N_{n})_{\mathbb{N}^{'}}}$ are two sequences of closed subspaces from $X$, and set
\[
\lambda_{M}\ =\ \lambda_{\mathbb{N}^{'}}[M_{n}, M],\ \ \ \lambda_{N}\ =\ \lambda_{\mathbb{N}^{'}}[N_{n}, N];
\]
then the following propositions are true
\begin{enumerate}
  \item Suppose that ${M+N = (M \cap N) \oplus S}$ and define a real number
\[
\omega_{S}(M)\ =\ 2 \times (\lambda_{M} + \varphi_{S}(M, N) \times (\lambda_{M} + \lambda_{N})).
\]  
If ${(H_{n})_{\mathbb{N^{''}}}}$ is a sequence of closed subspaces, ${H_{n} \subset M_{n}}$ for all ${n \in \mathbb{N}^{''}}$ and ${\delta(H_{n}, N_{n}) \rightarrow 0}$, then
\[
\lambda_{\mathbb{N}^{''}}[H_{n}, M \cap N]\ \leq\ \omega_{S}(M).
\]
    \item Suppose that pair ${(M,N)}$ is lower semi--Fredholm and define a real number
\[
\omega\ =\ 2 \times (\min(\lambda_{M}, \lambda_{N}) + \varphi(M, N) \times (\lambda_{M} + \lambda_{N})).
\]  
If ${\omega < 1/2}$, then there exists a closed subspace $S$ such that ${M+N = M \cap N \oplus S}$ and 
\[
\omega_{S}(M)\ =\ 2 \times (\lambda_{M} + \varphi_{S}(M, N) \times (\lambda_{M} + \lambda_{N}))\ <\ \frac{1}{2}.
\]
\end{enumerate}
\end{lemma}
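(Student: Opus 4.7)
\emph{Proof proposal.} The plan is to follow the template of Theorem $2.5.1$ in \cite{burshteyn}, which establishes precisely this kind of statement by chaining three $\lambda$-approximations together with the $\Phi^{-1}$-decomposition of $M + N$.

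For proposition $1$, I would fix $\eta > 0$ and take an arbitrary unit sequence $(x_n) \triangleleft (H_n)$. Since $H_n \subset M_n$, the adjustment $\lambda_{\mathbb{N}'}[M_n, M] = \lambda_M$ extracts a subsequence along which there exist $m_n \in M$ and a fixed $z_M \in X$ with $\varlimsup \|x_n - m_n - z_M\| \leq \lambda_M + \eta$. The hypothesis $\delta(H_n, N_n) \to 0$ then supplies $y_n \in N_n$ with $\|x_n - y_n\| \to 0$, and applying $\lambda_{\mathbb{N}'}[N_n, N] = \lambda_N$ to $(y_n)$ yields a further subsequence, vectors $p_n \in N$, and a fixed $z_N \in X$ with $\varlimsup \|y_n - p_n - z_N\| \leq \lambda_N + \eta$. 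Combining the three estimates via the triangle inequality gives
\[
\varlimsup \|(m_n - p_n) - (z_N - z_M)\| \leq \lambda_M + \lambda_N + 2\eta.
\]

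Because $M + N = (M \cap N) \oplus S$ is closed, the decompositions $M = (M \cap N) \oplus (M \cap S)$ and $N = (M \cap N) \oplus (N \cap S)$ are valid, so one writes $m_n = t^M_n + u_n$ and $p_n = t^N_n + v_n$ with $t^M_n, t^N_n \in M \cap N$, $u_n \in M \cap S$, and $v_n \in N \cap S$. The bound $\|\Phi^{-1}\| = \varphi_S(M,N)$ implies that the $(M \cap N)$-component $t^M_n - t^N_n$ and the $S$-component $u_n - v_n$ of $m_n - p_n$ each have norm at most $\varphi_S(M,N) \cdot \|m_n - p_n\|$. The next step is to push the bounded remainder of the previous display into fixed vectors: passing to further subsequences, one arranges that each of $t^M_n - t^N_n$ and $u_n - v_n$ differs from a fixed element of $X$ by at most $\varphi_S(M,N)(\lambda_M + \lambda_N) + O(\eta)$. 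Since $x_n - t^M_n = (x_n - m_n) + u_n \approx z_M + u_n$, this realises $x_n$ in the form $t^M_n + (\text{fixed vector}) + (\text{error})$ with $t^M_n \in M \cap N$ and error bounded by $2(\lambda_M + \varphi_S(M,N)(\lambda_M + \lambda_N)) + O(\eta) = \omega_S(M) + O(\eta)$; letting $\eta \to 0$ yields $\lambda_{\mathbb{N}''}[H_n, M \cap N] \leq \omega_S(M)$.

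For proposition $2$, the lower semi--Fredholm hypothesis guarantees the existence of closed algebraic complements $S$ of $M \cap N$ in $M + N$, and by definition $\varphi(M,N) = \inf_S \varphi_S(M,N)$. Given $\omega < 1/2$, for every $\varepsilon > 0$ one may select $S$ with $\varphi_S(M,N) < \varphi(M,N) + \varepsilon$; taking $\varepsilon$ small enough in terms of the slack $1/2 - \omega$, and swapping the roles of $M$ and $N$ if $\lambda_N < \lambda_M$ (which is legitimate because the construction of $\Phi$ and $\varphi_S$ is symmetric in the two subspaces), produces $\omega_S(M) < 1/2$ as claimed.

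The main obstacle, and the reason this is a genuine lemma rather than a direct consequence of the definitions, is the middle step of proposition $1$: showing that along suitable diagonal subsequences the components $t^M_n - t^N_n$ and $u_n - v_n$ simultaneously stabilise modulo fixed vectors, while keeping track of which quantities depend on $n$ and which are frozen. This bookkeeping, together with the quantitative use of the continuity of $\Phi^{-1}$, is precisely what is carried out in Theorem $2.5.1$ of \cite{burshteyn}, so the present lemma follows by directly invoking that result.
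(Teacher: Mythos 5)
Your proposal ultimately rests on the same move as the paper's own proof: both simply defer to Theorem~$2.5.1$ of \cite{burshteyn}, the paper citing its proposition~$2$ for part~$1$ and a step inside its proposition~$1$ for part~$2$. The intermediate sketch you offer is a plausible heuristic reconstruction of what that theorem does, but since you explicitly cede the bookkeeping (in particular the component--stabilization step, which as written would not go through by mere subsequence extraction in an infinite--dimensional $S$, and the asymmetry between $\min(\lambda_M,\lambda_N)$ in $\omega$ and $\lambda_M$ in $\omega_S(M)$ in part~$2$) to the cited theorem, your proof and the paper's are in substance identical.
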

\begin{proof}
Proposition $1$ is true due to proposition $2$ from Theorem $2.5.1$ from \cite{burshteyn}. The proof of the proposition $2$ can be found in the proof of the proposition $1$ from the same Theorem $2.5.1$ from \cite{burshteyn}. 
\end{proof}
We are now ready to formulate and prove the lower semi--Fredholm stability theorem for pairs of subspaces:
\begin{theorem}[Lower Semi--Fredholm Pairs are Stable under Singular Adjustment]\label{lsfpssa}
Let $(M,N)$ be a lower semi--Fredholm pair of closed subspaces in a Banach space $X$. Let ${(M_{n})_{\mathbb{N}^{'}}}$, ${(N_{n})_{\mathbb{N}^{'}}}$ are two sequences of closed subspaces from $X$, and set
\[
\mathcal{F}\mathcal{S}\mathcal{S}\lambda_{M} = \mathcal{F}\mathcal{S}\mathcal{S}\lambda_{\mathbb{N}^{'}}[M_{n}, M],\ \ \ \mathcal{F}\mathcal{S}\mathcal{S}\lambda_{N} = \mathcal{F}\mathcal{S}\mathcal{S}\lambda_{\mathbb{N}^{'}}[N_{n}, N];
\]
\[
\mathcal{S}\mathcal{S}\lambda_{M} = \mathcal{S}\mathcal{S}\lambda_{\mathbb{N}^{'}}[M_{n}, M],\ \ \ \mathcal{S}\mathcal{S}\lambda_{N} = \mathcal{S}\mathcal{S}\lambda_{\mathbb{N}^{'}}[N_{n}, N];
\]
then the following propositions are true
\begin{enumerate}
  \item Define a real number
\[
\mathcal{F}\mathcal{S}\mathcal{S}\omega\ =\ 2 \times (\min(\mathcal{F}\mathcal{S}\mathcal{S}\lambda_{M}, \mathcal{F}\mathcal{S}\mathcal{S}\lambda_{N}) + \varphi(M, N) \times (\mathcal{F}\mathcal{S}\mathcal{S}\lambda_{M} + \mathcal{F}\mathcal{S}\mathcal{S}\lambda_{N})).
\]  
If ${\mathcal{F}\mathcal{S}\mathcal{S}\omega < 1/2}$, then for large enough ${n \in \mathbb{N}^{'}}$ pairs ${(M_{n},N_{n})}$ are also lower semi--Fredholm and
\[
\varlimsup_{n \in\mathbb{N}^{'}} \alpha (M_{n}, N_{n})\ <\ \infty.
\]
  \item Define a real number
\[
\mathcal{S}\mathcal{S}\omega\ =\ 2 \times (\min(\mathcal{S}\mathcal{S}\lambda_{M}, \mathcal{S}\mathcal{S}\lambda_{N}) + \varphi(M, N) \times (\mathcal{S}\mathcal{S}\lambda_{M} + \mathcal{S}\mathcal{S}\lambda_{N})).
\]  
If ${\mathcal{S}\mathcal{S}\omega < 1/2}$, then for large enough ${n \in \mathbb{N}^{'}}$ pairs ${(M_{n},N_{n})}$ are also lower semi--Fredholm.
\end{enumerate}
\end{theorem}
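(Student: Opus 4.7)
The plan is to argue by contradiction, adapting the proof of lower semi--Fredholm stability from Theorem~2.5.1 of \cite{burshteyn} so that the uniform $\lambda$-bounds required by Lemma~\ref{ftl} are supplied by the strictly singular (respectively finitely strictly singular) definitions on appropriate sub-subsequences rather than being available globally. Assume on some infinite $\mathbb{N}^{''} \subset \mathbb{N}^{'}$ the pair $(M_n, N_n)$ fails to be lower semi--Fredholm; for Proposition~1 also include the case $\alpha(M_n, N_n) \to \infty$. Swap the roles of $M$ and $N$ if necessary so that the $\min$ in $\mathcal{SS}\omega$ (respectively $\mathcal{FSS}\omega$) is realized by $\mathcal{SS}\lambda_M$ (respectively $\mathcal{FSS}\lambda_M$), and apply Lemma~\ref{ftl} proposition~2 to fix a closed complement $S$ of $M \cap N$ in $M+N$ so that $\omega_S(M) < 1/2$ whenever $\lambda_M, \lambda_N$ are replaced by $\mathcal{SS}\lambda_M + \varepsilon$, $\mathcal{SS}\lambda_N + \varepsilon$ (respectively their FSS counterparts) for all sufficiently small $\varepsilon > 0$.

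Consider first the intersection case $\dim(M_n \cap N_n) \to \infty$ along $\mathbb{N}^{''}$. Choose $K_n \subset M_n \cap N_n$ with $\dim K_n \to \infty$: finite-dimensional with growing dimension for FSS, or $K_n = M_n \cap N_n$ itself infinite-dimensional for SS. Apply the FSS or SS definition on the $M$-side to $(K_n) \prec (M_n)$ to extract $L_n \prec K_n$ with the required dimension profile and $\lambda_{\mathbb{N}^{'''}}[L_n, M] \le \mathcal{FSS}\lambda_M + \varepsilon$; since $L_n \subset N_n$ inherits the dimensional property, apply the corresponding definition on the $N$-side to obtain $L'_n \prec L_n$ with $\lambda_{\mathbb{N}^{'4}}[L'_n, N] \le \mathcal{FSS}\lambda_N + \varepsilon$. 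Invoke Lemma~\ref{ftl} proposition~1 along this last index set with the sequence $(L'_n)$ substituted for both $(M_n)$ and $(N_n)$ in the lemma and $H_n = L'_n$; the hypothesis $\delta(H_n, N_n) = 0$ is trivial, so the conclusion $\lambda[L'_n, M \cap N] \le \omega_S(M) < 1/2$ contradicts $\dim L'_n \to \infty$ against the finite-dimensionality of $M \cap N$ via the $\lambda$-analog of the small gap theorem established in \cite{burshteyn}.

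For the non-closed-sum case, after passing to a complement of the (necessarily finite-dimensional) intersection so that $M_n \cap N_n = \{\theta\}$ on the relevant subsequence, Lemma~\ref{hncsip} produces infinite-dimensional $H_n \subset M_n$ and $G_n \subset N_n$ with $\max(\delta(H_n, G_n), \delta(G_n, H_n)) < 1/n$. The SS definition on the $M$-side applied to $(H_n)$ yields infinite-dimensional $L_n \prec H_n$ with $\lambda[L_n, M] \le \mathcal{SS}\lambda_M + \varepsilon$ and $\delta(L_n, N_n) \to 0$ by gap transfer from $(H_n, G_n)$. For the matching $N$-side extraction, the second clause of Lemma~\ref{hncsip} applied inside the pair $(L_n, G_n)$ furnishes an infinite-dimensional $L^{*}_n \subset G_n \subset N_n$ with $\max(\delta(L_n, L^{*}_n), \delta(L^{*}_n, L_n)) < 1/n$; a further application of the SS definition to $(L^{*}_n) \prec (N_n)$ produces $L^{**}_n \prec L^{*}_n$ with $\lambda[L^{**}_n, N] \le \mathcal{SS}\lambda_N + \varepsilon$. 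For the FSS proposition one first uses Lemma~\ref{253} to pass from the infinite-dimensional matched pair to growing finite-dimensional matched pairs before the $N$-side FSS extraction. In either case Lemma~\ref{ftl} proposition~1 now applies along a common sub-subsequence, delivering $\lambda[\cdot, M \cap N] < 1/2$ for subspaces of unbounded dimension and hence the same contradiction as in the intersection case.

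The main obstacle lies in the non-closed-sum case, especially in Proposition~2: after extracting an infinite-dimensional sub-subsequence on one side, one must preserve both the small-gap pair structure of Lemma~\ref{hncsip} and the infinite dimensionality required by the SS definition through the second extraction, and the gap estimate threatens to degrade whenever the shrunken $N$-side subsequence $L^{**}_n$ is not matched to the $M$-side one by yet another application of Lemma~\ref{hncsip}. The technical device of invoking Lemma~\ref{ftl} with the same sub-subsequence substituted into both the $(M_n)$ and $(N_n)$ roles (with $H_n$ set equal to that sequence) is the natural bridge that converts strictly singular $\lambda$-bounds into bounds involving $M \cap N$, and the bulk of the technical effort should be expected in making the $N$-side matching rigorous.
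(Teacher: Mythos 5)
Your proposal reproduces the paper's overall strategy faithfully: argue by contradiction, split into the unbounded-intersection case and the non-closed-sum case, use the (F)SS definitions on the $M$-side and then the $N$-side to extract sub-subsequences on which the plain uniform $\lambda$-adjustment numbers satisfy $\omega<1/2$, and then apply Lemma~\ref{ftl} (propositions 2 then 1) together with the small uniform adjustment theorem from \cite{burshteyn} to produce a dimension contradiction against $\dim(M\cap N)<\infty$. The intersection case is handled exactly as the paper does (the subspaces are literally contained in both $M_n$ and $N_n$, so the two-sided extraction and the trivial gap $\delta(H_n,H_n)=0$ make Lemma~\ref{ftl} applicable directly); that part of your write-up is correct and complete.

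Two points deserve comment. First, for the non-closed-sum part of Proposition~1 the paper does \emph{not} reduce to $M_n\cap N_n=\{\theta\}$ and does not use Lemma~\ref{hncsip}: it invokes Kato's Theorem~4.19 to obtain infinite-dimensional $H_n\subset M_n$ with $\delta(H_n,N_n)\to 0$, then chops these down to growing finite-dimensional pieces so that the FSS definition and Lemma~\ref{253} can be applied. Your route --- reducing to trivial intersection (justified by Remark~\ref{r:0} once boundedness of $\alpha(M_n,N_n)$ is established) and invoking Lemma~\ref{hncsip} for both FSS and SS --- is a legitimate and slightly more unified variant, but it is a variation, not what the paper does. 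Second, and more importantly: in the non-closed-sum case you cannot invoke Lemma~\ref{ftl} ``with the same sub-subsequence substituted into both the $(M_n)$ and $(N_n)$ roles,'' because after the two successive extractions the $M$-side subspace $L_n$ and the $N$-side subspace $L_n^{**}$ are distinct (and in fact intersect trivially). You correctly observe that another application of Lemma~\ref{hncsip} is needed to produce an $M$-side subspace matched in small gap to $L_n^{**}$, but this step must actually be carried out --- it is precisely step~5 of the paper's argument (producing $Q_n\subset K_n$ matched with $P_n$). Then Lemma~\ref{ftl} proposition~1 is applied with those two \emph{distinct} matched sequences in the $(M_n)$ and $(N_n)$ slots and $H_n$ set to the $M$-side one, so that $\delta(H_n,N_n)\to 0$ holds by the gap estimate, not by triviality. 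As written, your proposal flags this final step as ``the bulk of the technical effort'' but then mis-describes the bridge as a single common sequence; filling in the extra \ref{hncsip} application and correcting that role assignment is what is needed to make the non-closed-sum case rigorous.
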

\begin{proof}
In order to prove proposition $1$ we first prove that for large enough ${n \in \mathbb{N}^{'}}$ defect numbers ${\alpha(M_{n}, N_{n}) = \dim M_{n} \cap N_{n}}$ are finite and limited from above. In order to do that assume the opposite -- then there exists a subsequence ${\mathbb{N}^{''} \subset \mathbb{N}^{'}}$ such that ${(\dim M_{n} \cap N_{n})_{n \in \mathbb{N}^{''}} \rightarrow \infty}$. According to the condition of the theorem we can choose a real number ${\varepsilon > 0}$ such that
\begin{multline}\label{E:10}
\begin{aligned}
0\ <\ \varepsilon\ <\ \frac{1 - 2 \times \mathcal{F}\mathcal{S}\mathcal{S}\omega}{4 \times (1 + 2 \varphi(M,N))};
\end{aligned}
\end{multline}
since ${M_{n} \cap N_{n} \subset M_{n}}$ for each ${n \in \mathbb{N}^{''}}$, by definition of finitely strictly singular uniform $\lambda-$adjustment there exists a subsequence of finite-dimensional subspaces ${(G_{n})_{\mathbb{N}{'''}} \prec (M_{n} \cap N_{n})_{\mathbb{N}^{''}}}$ such that\linebreak ${(\dim G_{n})_{n \in \mathbb{N}^{''}} \rightarrow \infty}$ and that ${\lambda_{\mathbb{N}^{'''}}[G_{n}, M] \leq \mathcal{F}\mathcal{S}\mathcal{S}\lambda_{\mathbb{N}^{'}}[M_{n}, M] + \varepsilon}$. Since ${G_{n} \subset M_{n} \cap N_{n} \subset N_{n}}$ for each ${n \in \mathbb{N}^{'''}}$, again by definition of finitely strictly singular uniform $\lambda-$adjustment there exists a subsequence of subspaces ${(H_{n})_{\mathbb{N}{'4}} \prec (G_{n})_{\mathbb{N}^{'4}}}$ such that ${(\dim H_{n})_{n \in \mathbb{N}^{'4}} \rightarrow \infty}$ and that
\[
\lambda_{N}\ :=\ \lambda_{\mathbb{N}^{'4}}[H_{n}, N]\ \leq\  \mathcal{F}\mathcal{S}\mathcal{S}\lambda_{\mathbb{N}^{'}}[N_{n}, N] + \varepsilon\ =\ \mathcal{F}\mathcal{S}\mathcal{S}\lambda_{N} + \varepsilon.
\]
At the same time, since ${(H_{n})_{\mathbb{N}{'4}} \prec (G_{n})_{\mathbb{N}^{'4}}}$ and by the choice of ${(G_{n})_{\mathbb{N}^{'''}}}$ we can write
\[
\lambda_{M}\ :=\ \lambda_{\mathbb{N}^{'4}}[H_{n}, M]\ \leq\ \lambda_{\mathbb{N}^{'''}}[G_{n}, M]\ \leq\  \mathcal{F}\mathcal{S}\mathcal{S}\lambda_{\mathbb{N}^{'}}[M_{n}, M] + \varepsilon\ =\ \mathcal{F}\mathcal{S}\mathcal{S}\lambda_{M} + \varepsilon.
\]
From the above two inequalities, from the choice of $\varepsilon$ and from the condition of the theorem we can estimate
\begin{multline}
\notag
\begin{aligned}
\omega\ &=\ 2 \times (\min(\lambda_{M}, \lambda_{N}) + \varphi(M, N) \times (\lambda_{M} + \lambda_{N}))\\
               &\leq\ 2 \times (\min(\mathcal{F}\mathcal{S}\mathcal{S}\lambda_{M} + \varepsilon, \mathcal{F}\mathcal{S}\mathcal{S}\lambda_{N} + \varepsilon) + \varphi(M, N) \times (\min(\mathcal{F}\mathcal{S}\mathcal{S}\lambda_{M} + \varepsilon + \mathcal{F}\mathcal{S}\mathcal{S}\lambda_{N} + \varepsilon))\\
               &=\ 2 \times (\min(\mathcal{F}\mathcal{S}\mathcal{S}\lambda_{M}, \mathcal{F}\mathcal{S}\mathcal{S}\lambda_{N}) + \varepsilon + \varphi(M, N) \times (\mathcal{F}\mathcal{S}\mathcal{S}\lambda_{M} + \mathcal{F}\mathcal{S}\mathcal{S}\lambda_{N}) + 2  \varphi(M, N) \varepsilon)\\
               &=\ 2 \times (\min(\mathcal{F}\mathcal{S}\mathcal{S}\lambda_{M}, \mathcal{F}\mathcal{S}\mathcal{S}\lambda_{N}) + \varphi(M, N) \times (\mathcal{F}\mathcal{S}\mathcal{S}\lambda_{M} + \mathcal{F}\mathcal{S}\mathcal{S}\lambda_{N})) + 2 \varepsilon (1 + 2 \varphi(M, N))\\
               &=\ \mathcal{F}\mathcal{S}\mathcal{S}\omega +  2 \varepsilon (1 + 2 \varphi(M, N))\ <\ \mathcal{F}\mathcal{S}\mathcal{S}\omega + 2 \times \frac{1 - 2 \times \mathcal{F}\mathcal{S}\mathcal{S}\omega}{4 \times (1 + 2 \varphi(M,N))} \times (1 + 2 \varphi(M, N))\\
               &=\ \mathcal{F}\mathcal{S}\mathcal{S}\omega + \frac{1}{2} - \mathcal{F}\mathcal{S}\mathcal{S}\omega\ =\ \frac{1}{2}.
\end{aligned}
\end{multline}
Therefore, we may now apply proposition $2$ from the previous lemma \ref{ftl} in case of the pair of sequences where each sequence is the same ${(H_{n})_{\mathbb{N}^{'4}}}$ and find a closed subspace ${S \subset X}$ such that ${X = M \cap N \oplus S}$ and that 
\[
\omega_{S}(M)\ =\ 2 \times (\lambda_{M} + \varphi_{S}(M, N) \times (\lambda_{M} + \lambda_{N}))\ <\ \frac{1}{2}.
\]
Due to above inequality and since obviously ${\delta(H_{n}, H_{n}) = 0}$, we may now apply proposition $1$ of the same lemma \ref{ftl} and conclude that
\[
\lambda_{\mathbb{N}^{'4}}[H_{n}, M \cap N]\ <\ \frac{1}{2}.
\]
Therefore, since ${\dim M \cap N < \infty}$, by the small uniform adjustment theorem $2.2.2$ from \cite{burshteyn} we conclude that dimensions of all subspaces ${H_{n}}$ are limited from above by a finite number which contradicts our choice of ${\dim H_{n} \rightarrow \infty}$. Therefore our assumption that there exists a subsequence ${\mathbb{N}^{''} \subset \mathbb{N}^{'}}$ such that ${(\dim M_{n} \cap N_{n})_{n \in \mathbb{N}^{''}} \rightarrow \infty}$ is incorrect and we shall conclude that for large enough ${n \in \mathbb{N}^{'}}$ the defect numbers ${\alpha(M_{n},N_{n})}$ which are the dimensions of subspaces ${M_{n} \cap N_{n}}$ are limited from above by a finite number.

In order to conclude the proof of the proposition $1$ we shall establish that ${M_{n} + N_{n} = \overline{M_{n} + N_{n}}}$ for large enough ${n \in \mathbb{N}^{'}}$. In order to do that assume the opposite and choose a subsequence ${\mathbb{N}^{''} \subset \mathbb{N}^{'}}$ such that ${M_{n} + N_{n} \neq \overline{M_{n} + N_{n}}}$ for each ${n \in \mathbb{N}^{''}}$. Then, we are going to build a number of sequences of subspaces through the following steps:
\begin{enumerate}
  \item According to theorem $4.19$ from \cite{kato}, page 226, there exist infinite-dimensional subspaces ${H_{n} \subset M_{n}}$ such that ${\delta(H_{n}, N_{n}) < n^{-1}}$. Therefore, we can choose a sequence of finite-dimensional subspaces ${(G_{n})_{\mathbb{N}^{''}} \prec (H_{n})_{\mathbb{N}^{''}}}$ such that ${(\dim G_{n})_{\mathbb{N}^{''}} \rightarrow \infty}$ and ${\delta(G_{n}, N_{n})_{\mathbb{N}^{''}} \rightarrow 0}$. 
  \item Now choose a real number ${\varepsilon}$ as in formula \eqref{E:10} above and recall that by definition of finitely strictly singular uniform $\lambda-$adjustment there exists a subsequence of finite-dimensional subspaces ${(K_{n})_{\mathbb{N}{'''}} \prec (G_{n})_{\mathbb{N}^{''}}}$ such that ${(\dim K_{n})_{n \in \mathbb{N}^{'''}} \rightarrow \infty}$ and that
  \begin{multline}\label{E:11}
  \begin{aligned}
  \lambda_{\mathbb{N}^{'''}}[K_{n}, M]\ \leq\  \mathcal{F}\mathcal{S}\mathcal{S}\lambda_{\mathbb{N}^{'}}[M_{n}, M] + \varepsilon.
  \end{aligned}
  \end{multline}
  \item Obviously ${\delta(K_{n}, N_{n})_{\mathbb{N}^{'''}} \rightarrow 0}$, hence by the previous lemma \ref{253} there exist two subsequences of subspaces ${(L_{n})_{\mathbb{N}^{'4}_{1}} \prec (K_{n})_{\mathbb{N}^{'4}}}$ and ${(P_{n})_{\mathbb{N}^{'4}} \prec (N_{n})_{\mathbb{N}^{'4}}}$ such that ${\dim L_{n} = \dim P_{n}}$ for all ${n \in \mathbb{N}^{'4}}$, ${\lim_{n \in \mathbb{N}^{'4}} \dim L_{n} = \infty}$ and ${\lim_{n \in \mathbb{N}^{'4}} \delta(L_{n}, P_{n}) = \lim_{n \in \mathbb{N}^{'4}} \delta(P_{n}, L_{n}) = 0}$. 
  \item Again by definition of finitely strictly singular uniform $\lambda-$adjustment there exists a subsequence of finite-dimensional subspaces ${(Q_{n})_{\mathbb{N}{'5}} \prec (P_{n})_{\mathbb{N}^{'5}}}$ such that ${(\dim Q_{n})_{n \in \mathbb{N}^{'5}} \rightarrow \infty}$ and that
  \begin{multline}\label{E:12}
  \begin{aligned}
\lambda_{\mathbb{N}^{'5}}[Q_{n}, N]\ \leq\ \mathcal{F}\mathcal{S}\mathcal{S}\lambda_{\mathbb{N}^{'}}[N_{n}, N] + \varepsilon.
  \end{aligned}
  \end{multline}
  \item Obviously ${\lim_{n \in \mathbb{N}^{'5}} \delta(Q_{n}, L_{n}) = \lim_{n \in \mathbb{N}^{'5}} \delta(P_{n}, L_{n}) = 0}$ since ${(Q_{n})_{\mathbb{N}{'5}} \prec (P_{n})_{\mathbb{N}^{'5}}}$; therefore applying the previous lemma \ref{253} this time to the pair of sequences of subspaces ${(Q_{n}, L_{n})_{\mathbb{N}^{'5}}}$ we can find two subsequences of subspaces ${(R_{n})_{\mathbb{N}{'6}} \prec (L_{n})_{\mathbb{N}^{'6}}}$ and ${(T_{n})_{\mathbb{N}{'6}} \prec (Q_{n})_{\mathbb{N}^{'6}}}$ such that ${\dim R_{n} = \dim T_{n}}$ for all ${n \in \mathbb{N}^{'6}}$, ${\lim_{n \in \mathbb{N}^{'6}} \dim R_{n} = \infty}$ and
  \begin{multline}\label{E:14}
  \begin{aligned}
  \lim_{n \in \mathbb{N}^{'6}} \delta(R_{n}, T_{n})\ =\ \lim_{n \in \mathbb{N}^{'6}} \delta(T_{n}, R_{n})\  =\ 0. 
  \end{aligned}
  \end{multline}
\end{enumerate}
The following two lines summarize the inclusion relations between the subspaces in our constructions:
\begin{multline}
\notag
\begin{aligned}
&(R_{n})_{\mathbb{N}^{'6}} & &\ \prec (L_{n})_{\mathbb{N}^{'4}} &\ \prec (K_{n})_{\mathbb{N}^{'''}} &\ \prec (G_{n})_{\mathbb{N}^{''}} &\ \prec (H_{n})_{\mathbb{N}^{''}} &\ \prec (M_{n})_{\mathbb{N}^{'}}\\
&(T_{n})_{\mathbb{N}^{'6}} &\ \prec (Q_{n})_{\mathbb{N}^{'5}} &\ \prec (P_{n})_{\mathbb{N}^{'4}} & & & &\ \prec (N_{n})_{\mathbb{N}^{'}}
\end{aligned}
\end{multline}
Combining the above relations with the previous estimates \ref{E:11} and \ref{E:12} we come to the estimates
\begin{multline}
\notag
\begin{aligned}
&\lambda_{M} &:=\ \lambda_{\mathbb{N}^{'6}}[R_{n}, M]\ &\leq\ &\lambda_{\mathbb{N}^{'''}}[K_{n}, M]\ &\leq\  \mathcal{F}\mathcal{S}\mathcal{S}\lambda_{\mathbb{N}^{'}}[M_{n}, M] + \varepsilon &=\ \mathcal{F}\mathcal{S}\mathcal{S}_{M} + \varepsilon,\\ 
&\lambda_{N} &:=\ \lambda_{\mathbb{N}^{'6}}[T_{n}, N]\ &\leq\ &\lambda_{\mathbb{N}^{'5}}[Q_{n}, N]\ &\leq\  \mathcal{F}\mathcal{S}\mathcal{S}\lambda_{\mathbb{N}^{'}}[N_{n}, N] + \varepsilon &=\ \mathcal{F}\mathcal{S}\mathcal{S}_{N} + \varepsilon.
\end{aligned}
\end{multline}
Now define
\[
\omega\ =\ 2 \times (\min(\lambda_{M}, \lambda_{N}) + \varphi(M, N) \times (\lambda_{M} + \lambda_{N})).
\]
According to our choice of $\varepsilon$ from \eqref{E:10} we can estimate ${\omega < 1/2}$ as before. Therefore, we can apply proposition $2$ from the previous lemma \ref{ftl} to two sequences of subspaces ${(R_{n})_{\mathbb{N}^{'6}}}$ and ${(T_{n})_{\mathbb{N}^{'6}}}$ and choose a closed subspace ${S \subset X}$ such that ${M+N = M \cap N \oplus S}$ and 
\[
\omega_{S}(M)\ =\ 2 \times (\lambda_{M} + \varphi_{S}(M, N) \times (\lambda_{M} + \lambda_{N}))\ <\ \frac{1}{2}.
\]
Now taking into account the gap estimate from \eqref{E:14} we may apply proposition $1$ from the same lemma \ref{ftl} and conclude that
\[
\lambda_{\mathbb{N}^{'6}}[R_{n}, M]\ \leq\ \omega_{S}(M).
\]
Therefore ${\lambda_{\mathbb{N}^{'6}}[R_{n}, M] < 1/2}$, hence since ${\dim M \cap N < \infty}$, by the small uniform adjustment theorem $2.2.2$ from \cite{burshteyn} we conclude that dimensions of all subspaces ${R_{n}}$ are limited from above by a finite number for large enough ${n \in \mathbb{N}^{'6}}$ which contradicts our construction in step $5$ as ${\dim R_{n} \rightarrow \infty}$. Therefore the assumption that there exists subsequence ${\mathbb{N}^{''} \subset \mathbb{N}^{'}}$ such that ${M_{n} + N_{n} \neq \overline{M_{n} + N_{n}}}$ for each ${n \in \mathbb{N}^{''}}$ is incorrect which means that ${M_{n} + N_{n} = \overline{M_{n} + N_{n}}}$ for large enough ${n \in \mathbb{N}^{'}}$ which concludes the the proof of the theorem's proposition $1$.
\\

We are now commencing the proof of the proposition 2. First we establish that ${\dim M_{n} \cap N_{n} < \infty}$ for large enough ${n \in \mathbb{N}^{'}}$. In order to do that assume the opposite -- then there exists a subsequence ${\mathbb{N}^{''}}$ such that ${\dim M_{n} \cap N_{n} = \infty}$ for all ${n \in \mathbb{N}^{''}}$. Hence, if ${\varepsilon}$ is a real number such that
\begin{multline}\label{E:1000}
\begin{aligned}
0\ <\ \varepsilon\ <\ \frac{1 - 2 \times \mathcal{S}\mathcal{S}\omega}{4 \times (1 + 2 \varphi(M,N))},
\end{aligned}
\end{multline} 
then since ${M_{n} \cap N_{n} \subset M_{n}}$ for each ${n \in \mathbb{N}^{''}}$, by definition of strictly singular uniform $\lambda-$adjustment there exists a subsequence of infinite-dimensional subspaces ${(G_{n})_{\mathbb{N}{'''}} \prec (M_{n} \cap N_{n})_{\mathbb{N}^{''}}}$ such that  ${\lambda_{\mathbb{N}^{'''}}[G_{n}, M] \leq \mathcal{S}\mathcal{S}\lambda_{\mathbb{N}^{'}}[M_{n}, M] + \varepsilon}$. Since ${G_{n} \subset M_{n} \cap N_{n} \subset N_{n}}$ for each ${n \in \mathbb{N}^{'''}}$, again by definition of finitely singular uniform $\lambda-$adjustment there exists a subsequence of subspaces\linebreak ${(H_{n})_{\mathbb{N}{'4}} \prec (G_{n})_{\mathbb{N}^{'4}}}$ such that ${(\dim H_{n})_{n \in \mathbb{N}^{'4}} = \infty}$ and that
\[
\lambda_{N}\ :=\ \lambda_{\mathbb{N}^{'4}}[H_{n}, N]\ \leq\  \mathcal{S}\mathcal{S}\lambda_{\mathbb{N}^{'}}[N_{n}, N] + \varepsilon\ =\ \mathcal{S}\mathcal{S}\lambda_{N} + \varepsilon.
\]
At the same time, since ${(H_{n})_{\mathbb{N}{'4}} \prec (G_{n})_{\mathbb{N}^{'4}}}$ and by the choice of ${(G_{n})_{\mathbb{N}^{'''}}}$ we can write
\[
\lambda_{M}\ :=\ \lambda_{\mathbb{N}^{'4}}[H_{n}, M]\ \leq\ \lambda_{\mathbb{N}^{'''}}[G_{n}, M]\ \leq\  \mathcal{S}\mathcal{S}\lambda_{\mathbb{N}^{'}}[M_{n}, M] + \varepsilon\ =\ \mathcal{S}\mathcal{S}\lambda_{M} + \varepsilon.
\]
From this point we may continue the proof exactly as in the previous proof for finitely strictly singular case and come to a contradiction. Therefore, we must conclude that that ${\dim M_{n} \cap N_{n} < \infty}$ for large enough ${n \in \mathbb{N}^{'}}$.

We are now left to prove that ${M_{n} + N_{n} = \overline{M_{n} + N_{n}}}$ for large enough ${n \in \mathbb{N}^{'}}$. First note that since ${\dim M_{n} \cap N_{n} < \infty}$ for large enough ${n \in \mathbb{N}^{'}}$ there exist closed subspaces ${S_{n} \subset X}$ such that ${(M_{n} \cap N_{n}) \oplus S_{n} = X}$ for the same $n$. Therefore, it is enough to prove that\linebreak ${(M_{n} \cap S_{n}) \oplus (N_{n} \cap S_{n}) = \overline{(M_{n} \cap S_{n}) \oplus (N_{n} \cap S_{n})}}$ for large enough ${n \in \mathbb{N}^{'}}$. Thus, since\linebreak ${(M_{n} \cap S_{n}) \cap (N_{n} \cap S_{n}) = \{\theta\}}$, from now on we may safely assume that ${M_{n} \cap N_{n} = \{\theta\}}$.

Now assume the opposite -- then there exists a subsequence ${\mathbb{N}^{''} \subset \mathbb{N}^{'}}$ such that\linebreak ${M_{n} + N_{n} \neq \overline{M_{n} + N_{n}}}$ for all ${n \in \mathbb{N}^{''}}$. Then proceed building sequences of closed infinite-dimensional subspaces in the following steps:
\begin{enumerate}
  \item For each such $n$ choose two closed subspaces ${H_{n} \subset M_{n}}$ and ${G_{n} \subset N_{n}}$ as in lemma \ref{hncsip} such that ${\max(\delta(H_{n}, G_{n}), \delta(G_{n}, H_{n})) < n^{-1}}$ and that ${H_{n} + G_{n} \neq \overline{H_{n} + G_{n}}}$. 
  \item Choose a real number ${\varepsilon}$ as in formula \eqref{E:1000} above and recall that by definition of strictly singular uniform $\lambda-$adjustment there exists a subsequence of infinite-dimensional subspaces ${(K_{n})_{\mathbb{N}{'''}} \prec (H_{n})_{\mathbb{N}^{''}}}$ such that
  \begin{multline}\label{E:111}
  \begin{aligned}
  \lambda_{\mathbb{N}^{'''}}[K_{n}, M]\ \leq\ \mathcal{S}\mathcal{S}\lambda_{\mathbb{N}^{'}}[M_{n}, M] + \varepsilon.
  \end{aligned}
  \end{multline}
  \item By lemma \ref{hncsip} there exists a sequence of infinite-dimensional subspaces ${(L_{n})_{\mathbb{N}^{'''}} \prec (G_{n})_{\mathbb{N}^{'''}}}$ such that ${\max(\delta(K_{n}, L_{n}), \delta(L_{n}, K_{n})) < n^{-1}}$ and that ${K_{n} + L_{n} \neq \overline{K_{n} + L_{n}}}$ for each ${n \in \mathbb{N}^{'''}}$.
  \item Again by definition of finitely singular uniform $\lambda-$adjustment there exists a subsequence of infinite-dimensional subspaces ${(P_{n})_{\mathbb{N}{'4}} \prec (L_{n})_{\mathbb{N}^{'''}}}$ such that
  \begin{multline}
  \notag
  \begin{aligned}
  \lambda_{N}\ :=\ \lambda_{\mathbb{N}^{'4}}[P_{n}, N]\ \leq\ \mathcal{S}\mathcal{S}\lambda_{\mathbb{N}^{'}}[N_{n}, N] + \varepsilon.
  \end{aligned}
  \end{multline}
  \item Then by lemma \ref{hncsip} there exists an infinite-dimensional subspace ${Q_{n} \subset K_{n}}$ such that ${\max(\delta(Q_{n}, P_{n}), \delta(P_{n}, Q_{n})) < n^{-1}}$ and that ${Q_{n} + P_{n} \neq \overline{Q_{n} + P_{n}}}$ for each ${n \in \mathbb{N}^{'4}}$. From the last inclusion ${Q_{n} \subset K_{n}}$ and from inequality \eqref{E:111} it follows that
  \begin{multline}
  \notag
  \begin{aligned}
  \lambda_{M}\ :=\ \lambda_{\mathbb{N}^{'4}}[Q_{n}, M]\ \leq\ \lambda_{\mathbb{N}^{'''}}[K_{n}, M]\ \leq\  \mathcal{S}\mathcal{S}\lambda_{\mathbb{N}^{'}}[M_{n}, M] + \varepsilon.
  \end{aligned}
  \end{multline}\end{enumerate}
From this point we may follow the final part of the proof of the proposition $1$ applied to sequences of subspaces ${(P_{n})_{\mathbb{N}^{'4}}}$ and ${(Q_{n})_{\mathbb{N}^{'4}}}$ and conclude that for large enough ${n \in \mathbb{N}^{'4}}$ dimensions of all spaces ${Q_{n}}$ are finite and limited from above which contradicts our choice in step $5$ where every subspace ${Q_{n}}$ is infinite-dimensional. This concludes the proof of the proposition $2$ and of the entire theorem.
\end{proof}
An immediate consequence from the above Theorem \ref{lsfpssa} is the next theorem on stability of lower semi-Fredholm operators:
\begin{theorem}[Lower Semi-Fredholm operators are stable under small strictly singular adjustment]\label{lsfoasusmsa}
Recall that if $X$, $Y$ are two Banach spaces and ${A \in \mathcal{C}(X,Y)}$ (i.e. $A$ is a closed operator from $X$ to $Y$), then $A$ is called \emph{a lower semi-Fredholm operator} if its image ${R(A)}$ is closed in $Y$ and dimension of its kernel ${Ker(A)}$ in $X$ is finite. Let ${(A_{n})_{\mathbb{N}^{'}} \subset \mathcal{C}(X,Y)}$ be a sequence of closed operators from $X$ to $Y$. Then the following propositions are true:from $X$ to $Y$. Then the following propositions are true:
\begin{itemize} 
\item If ${\mathcal{F}\mathcal{S}\mathcal{S}\lambda_{\mathbb{N}^{'}}[A_{n}, A]}$ is small enough, then for large enough ${n \in \mathbb{N}^{'}}$ operators ${A_{n}}$ are also lower semi-Fredholm and ${\varlimsup \dim Ker(A_{n}) < \infty}$.
\item If ${\mathcal{S}\mathcal{S}\lambda_{\mathbb{N}^{'}}[A_{n}, A]}$ is small enough, then for large enough ${n \in \mathbb{N}^{'}}$ operators ${A_{n}}$ are also lower semi-Fredholm.
\end{itemize}
\end{theorem}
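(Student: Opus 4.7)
The plan is to reduce the operator statement to the pair--of--subspaces statement in Theorem \ref{lsfpssa} by passing to graphs in the product space $X \times Y$, exactly as Definition \ref{D:ssulssclo} suggests. Set $X_{0} := X \times \{\theta\} \subset X \times Y$. A direct computation shows $G_{A} \cap X_{0} = \{(x,\theta) \mid Ax = \theta\}$, which is linearly isomorphic to $Ker(A)$, and $G_{A} + X_{0} = X \times R(A)$, which is closed in $X \times Y$ if and only if $R(A)$ is closed in $Y$. Consequently, the closed operator $A$ is lower semi--Fredholm exactly when the pair of closed subspaces $(G_{A}, X_{0})$ in $X \times Y$ is lower semi--Fredholm in the sense of the preceding subsection, and $\alpha(G_{A}, X_{0}) = \dim Ker(A)$.

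With this translation in hand, I would apply Theorem \ref{lsfpssa} to the sequences $(M_{n})_{\mathbb{N}^{'}} := (G_{A_{n}})_{\mathbb{N}^{'}}$ and $(N_{n})_{\mathbb{N}^{'}} := (X_{0})_{\mathbb{N}^{'}}$, the latter being the constant sequence $N_{n} = X_{0}$. Using the trivial choice $L_{n} = K_{n}$ in Definitions \ref{ssua} (and recalling $\lambda_{\mathbb{N}^{'''}}[K_{n}, X_{0}] = 0$ whenever $K_{n} \subset X_{0}$), one sees immediately that $\mathcal{S}\mathcal{S}\lambda_{\mathbb{N}^{'}}[X_{0}, X_{0}] = \mathcal{F}\mathcal{S}\mathcal{S}\lambda_{\mathbb{N}^{'}}[X_{0}, X_{0}] = 0$. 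Hence in the notation of Theorem \ref{lsfpssa} the quantities $\mathcal{F}\mathcal{S}\mathcal{S}\lambda_{N}$ and $\mathcal{S}\mathcal{S}\lambda_{N}$ both vanish, so
\begin{align*}
\mathcal{F}\mathcal{S}\mathcal{S}\omega &= 2\,\varphi(G_{A}, X_{0}) \cdot \mathcal{F}\mathcal{S}\mathcal{S}\lambda_{\mathbb{N}^{'}}[G_{A_{n}}, G_{A}], \\
\mathcal{S}\mathcal{S}\omega           &= 2\,\varphi(G_{A}, X_{0}) \cdot \mathcal{S}\mathcal{S}\lambda_{\mathbb{N}^{'}}[G_{A_{n}}, G_{A}].
\end{align*}
Interpreting ``small enough'' in the theorem as
\[
\mathcal{F}\mathcal{S}\mathcal{S}\lambda_{\mathbb{N}^{'}}[A_{n}, A]\ <\ \frac{1}{4\,\varphi(G_{A}, X_{0})}\qquad \text{(resp.\ the same bound for $\mathcal{S}\mathcal{S}$),}
\]
we get $\mathcal{F}\mathcal{S}\mathcal{S}\omega < 1/2$ (resp.\ $\mathcal{S}\mathcal{S}\omega < 1/2$), and both propositions of Theorem \ref{lsfpssa} apply to $(G_{A_{n}}, X_{0})$.

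From the conclusions of Theorem \ref{lsfpssa}, for large $n$ the pairs $(G_{A_{n}}, X_{0})$ are lower semi--Fredholm in $X \times Y$; by the graph translation recalled above this means $R(A_{n})$ is closed in $Y$ and $\dim Ker(A_{n}) < \infty$, so $A_{n}$ is lower semi--Fredholm. In the finitely strictly singular case, Theorem \ref{lsfpssa}(1) additionally yields $\varlimsup_{n \in \mathbb{N}^{'}} \alpha(G_{A_{n}}, X_{0}) < \infty$, which is precisely $\varlimsup \dim Ker(A_{n}) < \infty$; in the strictly singular case, Theorem \ref{lsfpssa}(2) gives no such bound, matching the asymmetry in the statement of the present theorem.

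I do not anticipate a substantial obstacle: the entire content is the graph--to--pair dictionary and the observation that the ``target'' sequence $X \times \{\theta\}$ is constant, hence has zero strictly singular and finitely strictly singular adjustment with itself. The only care point is verifying that $G_{A} + X_{0} = X \times R(A)$ is closed if and only if $R(A)$ is closed (the forward direction is trivial; the reverse uses that $X \times R(A) = X \times \overline{R(A)}$ only when $R(A) = \overline{R(A)}$), and that $G_{A} \cap X_{0}$ carries the kernel dimension; both are standard and follow from the definition of the graph of a closed linear operator.
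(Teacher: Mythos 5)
Your proof is correct and follows essentially the same route as the paper: reduce to Theorem \ref{lsfpssa} via graphs in $X \times Y$, take the constant sequence $X \times \{\theta\}$ (whose strictly singular and finitely strictly singular adjustments with itself are zero), and read off the conclusions from the pair theorem. The only cosmetic difference is that you assign $M = G_{A}$, $N = X \times \{\theta\}$ whereas the paper takes $M = X \times \{\theta\}$, $N = G_{A}$ — this is immaterial since $\varphi(M,N)$, $\alpha(M,N)$, and the formula for $\omega$ are all symmetric in $M$ and $N$ — and you also make the ``small enough'' threshold explicit, which the paper leaves implicit.
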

\begin{proof}
By the standard procedure from \cite{kato} we reduce lower semi-Fredholm operators to lower semi-Fredholm pairs of subspaces as follows. First we consider a product space ${X \times Y}$ and a pair of its subspaces ${M := X \times \{\theta\}}$ and ${N := G_{A}}$. Then for each closed operator $A_{n}$ define a closed subspace ${N_{n} := G_{A_{n}}}$. It is clear that the pair of subspaces ${(M , N)}$ from ${X \times Y}$ is lower semi-Fredholm since operator $A$ is lower semi-Fredholm and that ${\dim M \cap N = \dim Ker(A)}$. Also by definitions \ref{D:ssulssclo} and \ref{D:nso}
\begin{multline}
\notag
\begin{aligned}
\mathcal{F}\mathcal{S}\mathcal{S}\lambda_{\mathbb{N}^{'}}[A_{n}, A]\ &=\  \mathcal{F}\mathcal{S}\mathcal{S}\lambda_{\mathbb{N}^{'}}[G_{A_{n}}, G_{A}]\ =\  \mathcal{F}\mathcal{S}\mathcal{S}\lambda_{\mathbb{N}^{'}}[N_{n}, N],\\ \mathcal{S}\mathcal{S}\lambda_{\mathbb{N}^{'}}[A_{n}, A]\ &=\  \mathcal{S}\mathcal{S}\lambda_{\mathbb{N}^{'}}[G_{A_{n}}, G_{A}]\ =\  \mathcal{S}\mathcal{S}\lambda_{\mathbb{N}^{'}}[N_{n}, N].
\end{aligned}
\end{multline}
Therefore, by the previous Theorem \ref{lsfpssa}, if either number ${\mathcal{F}\mathcal{S}\mathcal{S}\lambda_{\mathbb{N}^{'}}[A_{n}, A]}$ or ${\mathcal{S}\mathcal{S}\lambda_{\mathbb{N}^{'}}[A_{n}, A]}$ is small enough, then pairs of subspaces ${(M, N_{n})}$ are also lower lower semi-Fredholm for large enough ${n \in \mathbb{N}^{'}}$. Thus for the same $n$ operators ${A_{n}}$ are lower semi-Fredholm as well. Also, if ${\mathcal{F}\mathcal{S}\mathcal{S}\lambda_{\mathbb{N}^{'}}[A_{n}, A]}$ is small enough, then ${\varlimsup \dim M \cap N_{n} = \varlimsup \dim Ker(A_{n}) < \infty}$.
\end{proof}
An immediate consequence of the above stability theorem is that the dimensions of kernels under finitely singularly composition perturbations are limited:
\begin{theorem}[Stability of kernels of Composition Operator Sequences]\label{skcos}
Let $X$, $Y$ are Banach spaces with three sequences of linear operators ${(S_{n})_{\mathbb{N}^{'}} \subset \mathcal{B}(Y, Y)}$, ${(B_{n})_{\mathbb{N}^{'}} \subset \mathcal{B}(X, Y)}$ and ${(T_{n})_{\mathbb{N}^{'}} \subset \mathcal{B}(X, X)}$:
\[
\begin{CD}
X @>T_{n}>> X @>B_{n}>> Y @>S_{n}>> Y.
\end{CD}
\]
Let ${A \in \mathcal{B}(X, Y)}$ be a lower semi-Fredholm operator and suppose that ${\varlimsup \left\|S_{n}\right\| < \infty}$, ${\varlimsup \left\|T_{n}\right\| < \infty}$ and ${\mathcal{F}\mathcal{S}\mathcal{S}\lambda_{\mathbb{N}^{'}}[B_{n},\ \theta] = 0}$. Then for large enough ${n \in \mathbb{N}^{'}}$ operators ${A + B_{n} \circ T_{n}}$ and ${A + S_{n} \circ B_{n}}$ are also lower semi-Fredholm and dimensions of their kernels ${Ker(A + B_{n} \circ T_{n})}$, ${Ker(A + S_{n} \circ B_{n})}$ are limited by a finite number. 
\end{theorem}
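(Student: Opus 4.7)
The proof plan is to chain together three results already established in the paper: Lemma \ref{ssco} on composition stability, Remark \ref{r:1} on summation with finitely strictly singular $0$-adjusted sequences, and Theorem \ref{lsfoasusmsa} on lower semi-Fredholm stability. Each step is essentially algebraic once the hypotheses are verified.

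First, I would invoke Lemma \ref{ssco}, applied twice. Since $\mathcal{F}\mathcal{S}\mathcal{S}\lambda_{\mathbb{N}^{'}}[B_{n}, \theta] = 0$ and $\varlimsup \|T_{n}\| < \infty$, Lemma \ref{ssco} gives $\mathcal{F}\mathcal{S}\mathcal{S}\lambda_{\mathbb{N}^{'}}[B_{n} \circ T_{n}, \theta] = 0$. Similarly, since $\varlimsup \|S_{n}\| < \infty$, the other conclusion of Lemma \ref{ssco} yields $\mathcal{F}\mathcal{S}\mathcal{S}\lambda_{\mathbb{N}^{'}}[S_{n} \circ B_{n}, \theta] = 0$. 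Both composition sequences are thus finitely strictly singular $0$-adjusted to the null operator.

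Next, I would apply Remark \ref{r:1}, which states that if $A \in \mathcal{B}(X,Y)$ and $\mathcal{F}\mathcal{S}\mathcal{S}\lambda_{\mathbb{N}^{'}}[C_{n}, \theta] = 0$, then $\mathcal{F}\mathcal{S}\mathcal{S}\lambda_{\mathbb{N}^{'}}[A + C_{n}, A] = 0$. Taking $C_{n} = B_{n} \circ T_{n}$ and $C_{n} = S_{n} \circ B_{n}$ respectively (both in $\mathcal{B}(X,Y)$ by boundedness of the factors), I obtain
\[
\mathcal{F}\mathcal{S}\mathcal{S}\lambda_{\mathbb{N}^{'}}[A + B_{n} \circ T_{n}, A] = 0, \qquad \mathcal{F}\mathcal{S}\mathcal{S}\lambda_{\mathbb{N}^{'}}[A + S_{n} \circ B_{n}, A] = 0.
\]

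Finally, since $A$ is lower semi-Fredholm by hypothesis, both values $0$ fall well under any threshold required by Theorem \ref{lsfoasusmsa}. Applying that theorem's first bullet (the finitely strictly singular case) to the sequences $(A + B_{n} \circ T_{n})_{\mathbb{N}^{'}}$ and $(A + S_{n} \circ B_{n})_{\mathbb{N}^{'}}$ immediately yields that for large enough $n$ both operators are lower semi-Fredholm, with $\varlimsup \dim Ker(A + B_{n} \circ T_{n}) < \infty$ and $\varlimsup \dim Ker(A + S_{n} \circ B_{n}) < \infty$, which is the desired conclusion. There is no real obstacle here beyond checking the hypotheses of each cited result; the only point worth stating carefully is that $A$, $B_n$, $S_n$, $T_n$ are all bounded and defined on the whole space, so the sums $A + B_n \circ T_n$ and $A + S_n \circ B_n$ live in $\mathcal{B}(X,Y)$ and fit the framework of Theorem \ref{lsfoasusmsa} verbatim.
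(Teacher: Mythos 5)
Your proof is correct and follows essentially the same route as the paper: invoke Lemma \ref{ssco} to get $\mathcal{F}\mathcal{S}\mathcal{S}\lambda_{\mathbb{N}^{'}}[B_{n} \circ T_{n},\theta] = \mathcal{F}\mathcal{S}\mathcal{S}\lambda_{\mathbb{N}^{'}}[S_{n} \circ B_{n},\theta] = 0$, then Remark \ref{r:1} to pass to $\mathcal{F}\mathcal{S}\mathcal{S}\lambda_{\mathbb{N}^{'}}[A + B_{n} \circ T_{n}, A] = 0$ (and likewise for $S_{n} \circ B_{n}$), then Theorem \ref{lsfoasusmsa}. The only negligible difference is that you describe Lemma \ref{ssco} as being applied twice, when its proposition 1 already delivers both composition estimates in a single application.
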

\begin{proof}
The proof follows immediately from the above Theorem \ref{lsfoasusmsa} after noticing that\linebreak ${\mathcal{F}\mathcal{S}\mathcal{S}\lambda_{\mathbb{N}^{'}}[B_{n} \circ T_{n},\ \theta] = \mathcal{F}\mathcal{S}\mathcal{S}\lambda_{\mathbb{N}^{'}}[S_{n} \circ B_{n},\ \theta] = 0}$ by Lemma \ref{ssco}, and, therefore\linebreak ${\mathcal{F}\mathcal{S}\mathcal{S}\lambda_{\mathbb{N}^{'}}[A + B_{n} \circ T_{n},\ A] = \mathcal{F}\mathcal{S}\mathcal{S}\lambda_{\mathbb{N}^{'}}[A + S_{n} \circ B_{n},\ A] = 0}$ by Remark \ref{r:1}. 
\end{proof}
\section[Conclusion]{Conclusion}\label{c}
In conclusion we consider a weaker variant of strictly singular $\lambda-$adjustment that still allows for lower semi-Fredholm stability and discuss some open problems related to the interplay between strictly singular $\lambda-$adjustment and the geometry of Banach spaces.
\subsection[Relaxed Strict Singularity]{Relaxed Strict Singularity}\label{ssrss}
One can relax definition of number ${\mathcal{S}\mathcal{S}\lambda_{\mathbb{N}^{'}}[M, M_{n}]}$ by allowing finite dimensions of the subspaces ${(K_{n})_{\mathbb{N}^{''}} \prec (L_{n})_{\mathbb{N}^{'}}}$ from Definition \ref{ssua} as follows:
\begin{definition}[Relaxed Strict Singularity]\label{D:rss}
Let $(M_{n})_{\mathbb{N}^{'}}$ and $(P_{n})_{\mathbb{N}^{'}}$ be a pair of sequences of closed subspaces from a Banach space $X$, ${M_{n} \neq \{\theta\}}$ for all ${n \in \mathbb{N}^{'}}$ and $\lambda \geq 0$. We say that $(M_{n})_{\mathbb{N}^{'}}$ is $lower$\ $relaxed$\ $strictly$\ $singular$\ $uniformly$\ $\lambda-adjusted$ with $(P_{n})_{\mathbb{N}^{'}}$ if for any subsequence of closed subspaces $(K_{n})_{\mathbb{N}^{''}} \prec (M_{n})_{\mathbb{N}^{''}}$ such that ${\dim K_{n} = \infty}$ for all ${n \in \mathbb{N''}}$ there exists a subsequence of closed subspaces $(L_{n})_{\mathbb{N}^{'''}} \prec (K_{n})_{\mathbb{N}^{'''}}$ such that ${\dim L_{n} \rightarrow \infty}$ with the property ${\lambda_{\mathbb{N}^{'''}}[L_{n}, P_{n}] \leq \lambda}$. Let $\mathcal{R}\mathcal{S}\mathcal{S}\Lambda_{\mathbb{N}^{'}}[M_{n}, P_{n}]$ be the set of all such real numbers $\lambda$; then the $relaxed$\ $strictly$\ $singular$\ $uniform$\ $\lambda-$$adjustment$\ between $(M_{n})_{\mathbb{N}^{'}}$ and $(P_{n})_{\mathbb{N}^{'}}$ is a non-negative real number defined as
\[
\mathcal{R}\mathcal{S}\mathcal{S}\lambda_{\mathbb{N}^{'}}[M_{n}, P_{n}]\ :=\ \inf \{ \lambda \in \mathcal{R}\mathcal{S}\mathcal{S}\Lambda_{\mathbb{N}^{'}}[M_{n}, P_{n}] \}.
\]
\end{definition}
One can also extend this definition to closed operators as before by considering operator's graphs in the product space. Then with the method used in the proof of the previous theorems \ref{lsfpssa} and \ref{lsfoasusmsa} it is possible to prove the following version of semi-Fredholm stability (we omit its proof as it is almost the same as the proof of the previous theorems):
\begin{theorem} Let $X$ and $Y$ be two Banach spaces. Then the following propositions are true:
\begin{itemize}
\item Let $(M,N)$ be a lower semi--Fredholm pair of closed subspaces in a Banach space $X$. Let ${(M_{n})_{\mathbb{N}^{'}}}$, ${(N_{n})_{\mathbb{N}^{'}}}$ are two sequences of closed subspaces from $X$ such that both numbers\linebreak ${\mathcal{F}\mathcal{S}\mathcal{S}\lambda_{\mathbb{N}^{'}}[M_{n}, M]}$ and ${\mathcal{R}\mathcal{S}\mathcal{S}\lambda_{\mathbb{N}^{'}}[N_{n}, N]}$ are small enough. Then pairs of subspaces ${(M_{n}, N_{n})}$ are also lower semi-Fredholm for large enough ${n \in \mathbb{N}^{'}}$.
\item Let ${A \in \mathcal{C}(X,Y)}$ be a lower semi-Fredholm operator and ${(A_{n})_{\mathbb{N}^{'}} \subset \mathcal{C}(X,Y)}$ be a sequence of closed operators from $X$ to $Y$. If number ${\mathcal{R}\mathcal{S}\mathcal{S}\lambda_{\mathbb{N}^{'}}[A_{n}, A]}$ is small enough, then for large enough ${n \in \mathbb{N}^{'}}$ operators ${A_{n}}$ are also lower semi-Fredholm.
\end{itemize}
\end{theorem}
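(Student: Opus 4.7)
The plan is to follow the structure of the proofs of Theorem \ref{lsfpssa} and Theorem \ref{lsfoasusmsa} almost verbatim, adapting the key extraction steps to handle the asymmetric assumptions: finitely strictly singular on the $M$-side and only relaxed strictly singular on the $N$-side. The principal observation is that relaxed strict singularity outputs subspaces of possibly finite but growing dimension; once such a finite-dimensional output is produced by RSS, it can be immediately fed as input to FSS on the complementary side, which is precisely designed for finite-dimensional subspaces with dimensions tending to infinity. Thus the mixed-condition argument reduces to the original FSS argument after one preliminary refinement step using RSS, while the step that invokes Lemma \ref{hncsip} on the $M$-side uses the same SS-style scheme as in proposition 2 of Theorem \ref{lsfpssa}.

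For the subspace bullet I would first show $\dim M_n \cap N_n < \infty$ for large $n$, arguing by contradiction: suppose there is a subsequence $\mathbb{N}''$ with $\dim M_n \cap N_n = \infty$ (the case where these dimensions are finite but unbounded is \emph{allowed} by the conclusion, which does not claim $\varlimsup \alpha(M_n, N_n) < \infty$). Setting $G_n := M_n \cap N_n \subset N_n$, apply the RSS assumption on the $N$-side to extract $(L_n) \prec (G_n)$ with $\dim L_n \to \infty$ and $\lambda_{\mathbb{N}'''}[L_n, N] \le \mathcal{S}\mathcal{S}\lambda_N + \varepsilon$; passing, if necessary, to finite-dimensional subspaces inside each infinite-dimensional $L_n$ (which only shrinks $\lambda[\cdot, N]$ by monotonicity in the first argument) we may assume all $L_n$ are finite-dimensional with $\dim L_n \to \infty$. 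Since $L_n \subset M_n$, the FSS assumption produces $(L_n') \prec (L_n)$ with $\dim L_n' \to \infty$ and $\lambda[L_n', M]$ small, while $\lambda[L_n', N] \leq \lambda[L_n, N]$ remains small. Applying proposition 2 of Lemma \ref{ftl} to the constant pair of sequences $(L_n', L_n')$ produces an $S$ with $\omega_S(M) < 1/2$, then proposition 1 of Lemma \ref{ftl} yields $\lambda[L_n', M \cap N] < 1/2$, which contradicts $\dim M \cap N < \infty$ together with $\dim L_n' \to \infty$ by the small uniform adjustment theorem from \cite{burshteyn}.

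For the closed sum step, after reducing to $M_n \cap N_n = \{\theta\}$ by complementation exactly as in the proof of proposition 2 of Theorem \ref{lsfpssa}, assume by contradiction that $M_n + N_n \ne \overline{M_n + N_n}$ along a subsequence $\mathbb{N}''$. Invoke Lemma \ref{hncsip} to extract infinite-dimensional closed $H_n \subset M_n$ and $G_n \subset N_n$ with $\max(\delta(H_n, G_n), \delta(G_n, H_n)) < 1/n$. RSS applied to $(G_n)$ gives $(L_n) \prec (G_n)$ with $\dim L_n \to \infty$ and $\lambda[L_n, N]$ small; pass to finite-dimensional subspaces if needed. Since $\delta(L_n, H_n) \le \delta(G_n, H_n) \to 0$, Lemma \ref{253} applied to $(L_n, H_n)$ produces finite-dimensional $(\tilde L_n) \prec (L_n)$ and $(K_n) \prec (H_n)$ of equal growing dimension with vanishing gaps in both directions. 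FSS applied to $(K_n) \prec (M_n)$ then yields $(K_n') \prec (K_n)$ with $\dim K_n' \to \infty$ and $\lambda[K_n', M]$ small; a final application of Lemma \ref{253} to $(K_n', \tilde L_n)$ gives equal-dimension $(R_n) \prec (K_n')$, $(T_n) \prec (\tilde L_n)$ with vanishing mutual gaps, and by monotonicity $\lambda[R_n, M]$ and $\lambda[T_n, N]$ remain small. Invoking both propositions of Lemma \ref{ftl} on $(R_n, T_n)$ gives $\lambda[R_n, M \cap N] < 1/2$, again contradicting $\dim R_n \to \infty$ by the small uniform adjustment theorem.

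The operator bullet follows immediately from the subspace bullet by the standard reduction used in Theorem \ref{lsfoasusmsa}: set $M := X \times \{\theta\}$, $N := G_A$, $N_n := G_{A_n}$ in $X \times Y$, so $M_n = M$ is constant and $\mathcal{F}\mathcal{S}\mathcal{S}\lambda_{\mathbb{N}'}[M_n, M] = 0$, while the RSS hypothesis on operators translates to RSS for the graphs via Definition \ref{D:rss} extended as in Definition \ref{D:ssulssclo}. The main obstacle I anticipate is in the closed sum step: RSS may legitimately produce finite-dimensional $L_n$, which destroys the non-closed-sum property one might hope to inherit from $G_n + H_n$ (the sum of a closed subspace and a finite-dimensional one is always closed). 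The resolution is that we do not need to preserve non-closedness past the first application of RSS; it suffices to propagate vanishing gaps via Lemma \ref{253} and to rely on the same Lemma \ref{ftl} contradiction mechanism already used in the finite defect step, where the numerical smallness of $\lambda[\cdot,M]$ and $\lambda[\cdot,N]$, rather than any topological non-closedness, drives the contradiction.
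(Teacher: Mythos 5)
Your proposal is correct and carries out exactly the approach the paper alludes to (the paper omits the proof, stating only that the method of Theorems \ref{lsfpssa} and \ref{lsfoasusmsa} applies). You correctly navigate the two genuine subtleties of the mixed hypotheses: RSS must be applied on the $N$-side \emph{before} FSS on the $M$-side, since RSS requires infinite-dimensional input but may output finite-dimensional subspaces that can then feed FSS (reversing the order used in both propositions of Theorem \ref{lsfpssa}); and after the single initial use of Lemma \ref{hncsip}, the contradiction mechanism of Lemma \ref{ftl} needs only vanishing gaps (propagated by Lemma \ref{253}) and small $\lambda$-adjustment numbers preserved by monotonicity under passage to subspaces, not the non-closedness of sums that the SS-only argument keeps propagating.
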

\subsection[Strict Singularity and the Geometry of Banach Spaces]{Strict Singularity and Geometry of Banach Spaces}\label{tgoss}
The next lemma is an extension of the Small Gap Theorem \ref{tsgp} to strictly singular adjustment:
\begin{lemma}[The Small Strictly Singular Adjustment Theorem]\label{tsssat}
Let ${(M_{n})_{\mathbb{N}^{'}}}$ and ${(P_{n})_{\mathbb{N}^{'}}}$ be two sequences of closed subspaces from a Banach space $X$. Then the following propositions are true:
\begin{enumerate}
\item Suppose that ${\mathcal{F}\mathcal{S}\mathcal{S}\lambda_{\mathbb{N}^{'}}[M_{n}, P_{n}]\ <\ \frac{1}{2}}$ and that for large enough ${n \in \mathbb{N}^{'}}$ dimensions of spaces ${P_{n}}$ are finite and limited from above, i.e. ${\varlimsup_{n \in \mathbb{N}^{'}} \dim P_{n} < \infty}$. Then for large enough ${n \in \mathbb{N}^{'}}$ dimensions of spaces ${M_{n}}$ are also finite and limited from above, i.e. ${\varlimsup_{n \in \mathbb{N}^{'}} \dim M_{n} < \infty}$
\item Suppose that ${\mathcal{S}\mathcal{S}\lambda_{\mathbb{N}^{'}}[M_{n}, P_{n}]\ <\ \frac{1}{2}}$ and that for large enough ${n \in \mathbb{N}^{'}}$ dimensions of spaces ${P_{n}}$ are finite. Then for large enough ${n \in \mathbb{N}^{'}}$ dimensions of spaces ${M_{n}}$ are also finite.
\end{enumerate}
\end{lemma}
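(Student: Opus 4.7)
The plan is to prove both propositions by contradiction, reducing in each case to the small uniform adjustment theorem $2.2.2$ from \cite{burshteyn}, which is the tool the paper has already used to force finite or bounded dimensions on a sequence from small $\lambda$-adjustment with a finite-dimensional target.

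For proposition $1$, I would assume $\varlimsup \dim M_{n} = \infty$ and pass to a subsequence $\mathbb{N}^{''} \subset \mathbb{N}^{'}$ along which $\dim M_{n} \to \infty$. Inside each such $M_{n}$ one can select a finite-dimensional subspace $K_{n}$ with $\dim K_{n} \to \infty$: if $M_{n}$ itself is finite-dimensional take $K_{n} = M_{n}$, otherwise pick a finite-dimensional subspace whose dimension grows with $n$. Fix $\varepsilon > 0$ so small that $\mathcal{F}\mathcal{S}\mathcal{S}\lambda_{\mathbb{N}^{'}}[M_{n}, P_{n}] + \varepsilon < 1/2$. By the definition of $\mathcal{F}\mathcal{S}\mathcal{S}\lambda$ applied to $(K_{n})_{\mathbb{N}^{''}}$, I extract a further subsequence $(L_{n})_{\mathbb{N}^{'''}} \prec (K_{n})_{\mathbb{N}^{'''}}$ with $\dim L_{n} \to \infty$ and $\lambda_{\mathbb{N}^{'''}}[L_{n}, P_{n}] < 1/2$. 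Since by hypothesis $\varlimsup \dim P_{n} < \infty$, theorem $2.2.2$ of \cite{burshteyn} forces $\varlimsup \dim L_{n} < \infty$, a direct contradiction.

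For proposition $2$, I would similarly assume there exists a subsequence $\mathbb{N}^{''} \subset \mathbb{N}^{'}$ with $\dim M_{n} = \infty$ for every $n \in \mathbb{N}^{''}$. Taking $K_{n} = M_{n}$ in the definition of $\mathcal{S}\mathcal{S}\lambda$, and picking $\varepsilon > 0$ so small that $\mathcal{S}\mathcal{S}\lambda_{\mathbb{N}^{'}}[M_{n}, P_{n}] + \varepsilon < 1/2$, I obtain a subsequence $(L_{n})_{\mathbb{N}^{'''}} \prec (M_{n})_{\mathbb{N}^{'''}}$ of infinite-dimensional closed subspaces with $\lambda_{\mathbb{N}^{'''}}[L_{n}, P_{n}] < 1/2$. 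The hypothesis $\dim P_{n} < \infty$ lets me choose, inside each infinite-dimensional $L_{n}$, a finite-dimensional subspace $L_{n}^{'}$ of dimension $\dim P_{n} + 1$. Because $L_{n}^{'} \subset L_{n}$ and any unit sequence drawn from $(L_{n}^{'})$ is in particular drawn from $(L_{n})$, the monotonicity $\lambda_{\mathbb{N}^{'''}}[L_{n}^{'}, P_{n}] \leq \lambda_{\mathbb{N}^{'''}}[L_{n}, P_{n}] < 1/2$ is immediate from Definition \ref{D:ulass}. Applying theorem $2.2.2$ of \cite{burshteyn} to the pair $(L_{n}^{'}, P_{n})$ then gives $\dim L_{n}^{'} \leq \dim P_{n}$ for large $n$, contradicting the construction $\dim L_{n}^{'} = \dim P_{n} + 1$.

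The main technical point is the monotonicity of $\lambda$-adjustment under shrinking the source subspaces, which is immediate from the definition, plus the observation that the small uniform adjustment theorem $2.2.2$ from \cite{burshteyn} is flexible enough to handle both a sequence of targets of uniformly bounded dimension (needed for proposition $1$) and a sequence of finite-dimensional targets whose dimensions are compared pointwise to those of the sources (needed for proposition $2$); once these are in hand, the proof is a short chain of subsequence extractions followed by a single application of theorem $2.2.2$.
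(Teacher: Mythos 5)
Your argument for proposition~$1$ is essentially the paper's: assume $\varlimsup\dim M_{n}=\infty$ along a subsequence, select finite-dimensional $(K_{n})$ inside $(M_{n})$ with $\dim K_{n}\to\infty$, use the definition of $\mathcal{F}\mathcal{S}\mathcal{S}\lambda_{\mathbb{N}^{'}}$ to get $(L_{n})\prec(K_{n})$ with $\dim L_{n}\to\infty$ and $\lambda_{\mathbb{N}^{'''}}[L_{n},P_{n}]<1/2$, then apply theorem $2.2.2$ of \cite{burshteyn} together with $\varlimsup\dim P_{n}<\infty$ to conclude $\varlimsup\dim L_{n}<\infty$, the desired contradiction.

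For proposition~$2$ your argument departs from the paper's and, as written, has a gap. You insert finite-dimensional subspaces $L_{n}^{'}\subset L_{n}$ with $\dim L_{n}^{'}=\dim P_{n}+1$ and then assert that theorem $2.2.2$ returns the sharp estimate $\dim L_{n}^{'}\leq\dim P_{n}$. That is too strong: the discussion surrounding the open Theorem~\ref{tssssat} makes it clear that the form of theorem $2.2.2$ is that $\lambda_{\mathbb{N}^{'}}[M_{n},P_{n}]<1/2$ implies the existence of a constant $K\in\mathbb{N}$ with $\dim M_{n}<\dim P_{n}+K$ for large $n$, not the tight inequality $\dim M_{n}\leq\dim P_{n}$. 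With the correct bound, your construction yields only $\dim P_{n}+1<\dim P_{n}+K$, i.e. $K>1$, which contradicts nothing. The paper avoids this entirely: it applies theorem $2.2.2$ directly to the infinite-dimensional subspaces $L_{n}$ against the finite-dimensional $P_{n}$, and the conclusion $\dim L_{n}<\dim P_{n}+K<\infty$ already contradicts $\dim L_{n}=\infty$ with no intermediate $L_{n}^{'}$ needed. If you want to keep a finite-dimensional intermediate step, the dimension of $L_{n}^{'}$ must outgrow $\dim P_{n}$ unboundedly (say $\dim L_{n}^{'}=\dim P_{n}+n$), so that $n<K$ eventually fails; as written the step does not close.
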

\begin{proof}
In order to prove proposition $1$ choose ${\delta \in (0, \frac{1}{2} - \mathcal{F}\mathcal{S}\mathcal{S}\lambda_{\mathbb{N}^{'}}[M_{n}, P_{n}])}$ and assume the opposite -- then there exists a subsequence of subspaces ${(M_{n})_{\mathbb{N}^{''}}}$ such that ${\dim M_{n} \rightarrow \infty}$. Therefore, we may choose a subsequence of finite-dimensional subspaces ${(K_{n})_{\mathbb{N}^{''}} \prec (M_{n})_{\mathbb{N}^{''}}}$ such that ${\dim K_{n} \rightarrow \infty}$. Therefore, since ${\mathcal{F}\mathcal{S}\mathcal{S}\lambda_{\mathbb{N}^{'}}[M_{n}, P_{n}]\ <\ \frac{1}{2}}$, there exists a subsequence of finite-\linebreak dimensional subspaces ${(L_{n})_{\mathbb{N}^{'''}} \prec (K_{n})_{\mathbb{N}^{'''}}}$ such that ${\dim L_{n} \rightarrow \infty}$ and that
\[
\lambda_{\mathbb{N}^{'}}[L_{n}, P_{n}]\ <\ \mathcal{F}\mathcal{S}\mathcal{S}\lambda_{\mathbb{N}^{'}}[M_{n}, P_{n}] + \delta\ <\ \frac{1}{2}.
\]
However, by the small uniform adjustment theorem $2.2.2$ from \cite{burshteyn} the last inequality implies that for large enough ${n \in \mathbb{N}^{'''}}$ dimensions of all ${L_{n}}$ are finite and limited from above -- this contradiction means that our assumption is incorrect and therefore proposition $1$ is true.

Proposition $2$ can be proved in a similar way. That is, choose ${\delta \in [0, \frac{1}{2} - \mathcal{S}\mathcal{S}\lambda_{\mathbb{N}^{'}}[M_{n}, P_{n}]]}$ and assume the opposite -- then there exists a subsequence of subspaces ${(M_{n})_{\mathbb{N}^{''}}}$ such that ${\dim M_{n} = \infty}$ for all ${n \in \mathbb{N}^{''}}$. Therefore, since ${\mathcal{S}\mathcal{S}\lambda_{\mathbb{N}^{'}}[M_{n}, P_{n}]\ <\ \frac{1}{2}}$, there exists a subsequence of infinite-dimensional subspaces ${(L_{n})_{\mathbb{N}^{'''}} \prec (K_{n})_{\mathbb{N}^{'''}}}$ such that ${\lambda_{\mathbb{N}^{'}}[K_{n}, P_{n}] < \mathcal{S}\mathcal{S}\lambda_{\mathbb{N}^{'}}[M_{n}, P_{n}] + \delta < \frac{1}{2}}$. However, by the small uniform adjustment theorem $2.2.2$ from \cite{burshteyn} the last inequality implies that for large enough ${n \in \mathbb{N}^{''}}$ dimensions of all ${K_{n}}$ are finite -- this contradiction means that our assumption is incorrect and therefore proposition $2$ is true.
\end{proof}
Note that proposition $1$ from the above theorem assumes that dimensions of all subspaces ${P_{n}}$ are limited by a finite number. This is in contrast to the small uniform adjustment theorem $2.2.2$ from \cite{burshteyn} that does not have this limitation. This is because we do not know if the following stronger version of the proposition $1$, similar to theorem $2.2.2$ from \cite{burshteyn}, is true:
\begin{theorem}[The Strong Small Strictly Singular Adjustment Theorem]\label{tssssat}
Let ${(M_{n})_{\mathbb{N}^{'}}}$ and ${(P_{n})_{\mathbb{N}^{'}}}$ be two sequences of closed subspaces from a Banach space $X$. Suppose that\linebreak ${\mathcal{F}\mathcal{S}\mathcal{S}\lambda_{\mathbb{N}^{'}}[M_{n}, P_{n}]\ <\ \frac{1}{2}}$. Then there exists a natural number ${K \in \mathbb{N}}$ such that for large enough ${n \in \mathbb{N}^{'}}$
\[
\dim M_{n}\ <\ \dim P_{n}\ +\ K.
\]
\end{theorem}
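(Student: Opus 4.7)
The plan is to argue by contradiction in the spirit of the proof of Lemma \ref{tsssat}, but to push the argument further so that the bound on dimensions becomes uniform. Assume, for contradiction, that the conclusion fails. Then for every $K \in \mathbb{N}$ there exist arbitrarily large $n \in \mathbb{N}^{'}$ with $\dim M_{n} \geq \dim P_{n} + K$; by a diagonal extraction we pass to a subsequence $\mathbb{N}^{''} \subset \mathbb{N}^{'}$ along which $\dim M_{n} - \dim P_{n} \to \infty$ (with the convention that $\infty - k = \infty$). Fix ${\varepsilon > 0}$ small enough that $\mathcal{F}\mathcal{S}\mathcal{S}\lambda_{\mathbb{N}^{'}}[M_{n}, P_{n}] + \varepsilon < 1/2$.

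The next step is to choose an appropriate sequence of finite-dimensional subspaces $(K_{n})_{\mathbb{N}^{''}} \prec (M_{n})_{\mathbb{N}^{''}}$ with $\dim K_{n} \to \infty$. The naive choice $\dim K_{n} := \min(n, \dim M_{n})$ is sufficient to invoke the definition of finitely strictly singular $\lambda$-adjustment: there exists a further subsequence $(L_{n})_{\mathbb{N}^{'''}} \prec (K_{n})_{\mathbb{N}^{'''}}$ with $\dim L_{n} \to \infty$ and
\[
\lambda_{\mathbb{N}^{'''}}[L_{n}, P_{n}]\ \leq\ \mathcal{F}\mathcal{S}\mathcal{S}\lambda_{\mathbb{N}^{'}}[M_{n}, P_{n}] + \varepsilon\ <\ \frac{1}{2}.
\]
Applying the small uniform adjustment theorem $2.2.2$ from \cite{burshteyn} to the pair of sequences $(L_{n})_{\mathbb{N}^{'''}}$ and $(P_{n})_{\mathbb{N}^{'''}}$ yields a natural number $K$ such that $\dim L_{n} \leq \dim P_{n} + K$ for large enough $n \in \mathbb{N}^{'''}$.

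The main obstacle is then to convert this bound on $\dim L_{n}$ into the desired bound on $\dim M_{n}$. The bound $\dim L_{n} \leq \dim P_{n} + K$ is consistent with $\dim L_{n}$ growing arbitrarily slowly relative to $\dim K_{n}$, so by itself it does not contradict the assumption $\dim M_{n} - \dim P_{n} \to \infty$. To close the loop, the proposal is to refine the choice of $K_{n}$ as follows: using the hypothesis $\dim M_{n} - \dim P_{n} \to \infty$ and a projection onto the quotient $X / P_{n}$ (interpreting this carefully when $\dim P_{n} = \infty$ by replacing $P_{n}$ with any closed subspace of finite codimension in $M_{n} + P_{n}$ that intersects $M_{n}$ trivially), select $K_{n} \subset M_{n}$ with $K_{n} \cap P_{n} = \{\theta\}$ and $\dim K_{n} \to \infty$ in such a way that the natural injection $K_{n} \hookrightarrow X/P_{n}$ remains uniformly bounded below. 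Under such a choice, the condition $\lambda_{\mathbb{N}^{'''}}[L_{n}, P_{n}] < 1/2$ on any $L_{n} \prec K_{n}$ should imply an absolute dimension bound (independent of $\dim P_{n}$) via a direct extraction argument modelled on the proof of Theorem $2.2.2$ in \cite{burshteyn}, producing the contradiction with $\dim L_{n} \to \infty$.

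I expect the hardest part to be precisely this last refinement. The definition of FSS $\lambda$-adjustment delivers subsequences of \emph{some} growing dimension, but not control on how fast that dimension grows; bridging this gap is what makes the theorem genuinely stronger than Lemma \ref{tsssat}, and also what makes the author flag it as an open problem. A possible route around this obstacle, worth exploring in detail, is an iterated extraction: apply the FSS property once to obtain $L_{n}^{1}$, then apply it again to a sequence of quotients $K_{n}/L_{n}^{1}$ lifted back into $M_{n}$, and so on, producing a tower of subspaces whose total dimension exceeds $\dim P_{n} + K$ while each layer remains $\lambda$-adjusted to $P_{n}$; the small uniform adjustment theorem applied to the total then delivers the contradiction.
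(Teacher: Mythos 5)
The theorem you are asked to prove is not actually proved in the paper: the author states it explicitly as an open question, noting only that it can be established when $X$ is a Hilbert space (using norm-one projections), and that in general it would follow from a further geometric statement which itself ``currently appears to us to be an open question.'' So there is no ``paper's proof'' to compare against; the statement is a conjecture.

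Your proposal correctly locates the obstruction. After the first extraction you obtain $(L_{n})_{\mathbb{N}^{'''}} \prec (K_{n})_{\mathbb{N}^{'''}}$ with $\dim L_{n} \to \infty$ and $\lambda_{\mathbb{N}^{'''}}[L_{n},P_{n}] < 1/2$, and Theorem $2.2.2$ of \cite{burshteyn} then gives $\dim L_{n} \leq \dim P_{n} + K$. As you say, this is compatible with $\dim P_{n} \to \infty$ and so yields no contradiction: the FSS definition only delivers control over subspaces of \emph{some} (uncontrollably slowly) growing dimension inside $M_{n}$, not over $\dim M_{n}$ itself. Your proposed refinement --- to choose $K_{n} \subset M_{n}$ transversal to $P_{n}$ so that the embedding $K_{n} \hookrightarrow X/P_{n}$ is uniformly bounded below, then argue that a sub-$L_{n}$ with $\lambda_{\mathbb{N}^{'''}}[L_{n},P_{n}] < 1/2$ must have bounded dimension --- has two unresolved problems. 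First, the existence of such $K_{n}$ is precisely the second open statement at the end of the paper (the one the author relates to Dvoretzky, Milman, and Pe\l czy\'nski), so you are assuming the missing geometric ingredient rather than supplying it. Second, even granted such $K_{n}$, a bound $\mathrm{dist}(x,P_{n}) \geq c$ on unit $x \in L_{n}$ does not immediately clash with $\lambda_{\mathbb{N}^{'''}}[L_{n},P_{n}] < 1/2$, because the definition of uniform $\lambda$-adjustment allows subtraction of a single fixed vector $z \in X$; one would still need to run the compactness argument of Theorem $2.2.2$ to absorb that $z$, and it is not clear this can be done without a bound on $\dim P_{n}$. The ``iterated extraction / tower'' idea at the end is speculative and is not developed to the point where one could check it. In short: the proposal is an honest and accurate analysis of why the theorem is hard, but it is not a proof, and indeed no proof is known (outside the Hilbert case) --- the paper itself says so.
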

By the method used in the proof of theorem $2.2.2$ from \cite{burshteyn} one can prove the above theorem in the case when $X$ is a Hilbert space by using the fact that any subspace of a Hilbert space allows for a projection on that subspace of norm $1$. It is also not hard to see that the validity of the above theorem may be established if the following statement is true which currently appears to us to be an open question:
\begin{theorem}
Let $X$ be a Banach space and ${(M_{n})_{\mathbb{N}}}$, ${(N_{n})_{\mathbb{N}}}$ be two sequences of finite-dimensional subspaces from $X$ such that ${\dim M_{n} - \dim N_{n}\ \rightarrow\ \infty}$ (note that ${(N_{n})_{\mathbb{N}} \prec (M_{n})_{\mathbb{N}}}$ is not a requirement). Let ${(\varepsilon_{n})_{\mathbb{N}} \rightarrow 0}$ be a sequence of nonnegative real numbers. Then there exists a subsequence of subspaces ${(K_{n})_{\mathbb{N}^{'}} \prec (M_{n})_{\mathbb{N}^{'}}}$ such that ${\dim K_{n} \rightarrow \infty}$ and ${dist (x, N_{n}) \geq 1 - \varepsilon_{n}}$ for every unit vector ${x \in K_{n}}$ for every ${n \in \mathbb{N}^{'}}$.
\end{theorem}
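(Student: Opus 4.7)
The plan is to translate the problem into a question about quotient maps. Let ${q_n : X \to X/N_n}$ denote the canonical surjection, so that ${\|q_n x\|_{X/N_n} = dist(x, N_n)}$ for every ${x \in X}$. Setting ${T_n := q_n \mid_{M_n}}$, the desired conclusion is equivalent to producing a subsequence ${\mathbb{N}^{'}}$ and a sequence ${(K_n)_{\mathbb{N}^{'}} \prec (M_n)_{\mathbb{N}^{'}}}$ with ${\dim K_n \to \infty}$ such that ${\|T_n x\| \geq (1 - \varepsilon_n) \|x\|}$ for every ${x \in K_n}$. Since ${\ker T_n = M_n \cap N_n}$ has dimension at most ${\dim N_n}$, we have ${\dim T_n(M_n) \geq \dim M_n - \dim N_n \to \infty}$, and moreover ${\|T_n\| \leq 1}$. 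Thus the problem reduces to showing that a norm-one operator between finite-dimensional Banach spaces whose image dimension tends to infinity must admit almost-isometric domain subspaces of unbounded dimension.

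The natural first step is an iterative Riesz-type construction. At stage $j$, the subspace ${N_n + sp\{x_1, \ldots, x_{j-1}\}}$ has dimension at most ${\dim N_n + j - 1}$, which is strictly less than ${\dim M_n}$ when $j$ is small compared to ${\dim M_n - \dim N_n}$. By Riesz's lemma, for any prescribed ${\delta_{n,j} > 0}$ one may choose a unit vector ${x_j \in M_n}$ with ${dist(x_j, N_n + sp\{x_1, \ldots, x_{j-1}\}) \geq 1 - \delta_{n,j}}$, which in particular forces ${\|T_n x_j\| \geq 1 - \delta_{n,j}}$. Setting ${K_n := sp\{x_1, \ldots, x_{k_n}\}}$ with ${k_n \to \infty}$ chosen sufficiently slowly relative to the margin ${\dim M_n - \dim N_n}$, and then extracting a suitable subsequence, provides a candidate.

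The principal obstacle is the passage from pointwise control of the constructed basis vectors to uniform control over the whole subspace $K_n$. Each individual $x_j$ is nearly orthogonal to $N_n$ and to its predecessors, but an arbitrary unit vector ${x = \sum_i \alpha_i x_i \in K_n}$ may satisfy ${dist(x, N_n) \ll 1}$ if the basis ${\{x_j\}}$ has large basis constant. In a general Banach space, the basis constant of an inductively chosen near-orthogonal system is not a priori bounded; standard Auerbach-type constructions give biorthogonal constants of order ${\sqrt{\dim K_n}}$, which is insufficient to propagate the distance estimate uniformly through $K_n$.

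An alternative route is to invoke Dvoretzky's theorem to produce nearly-Euclidean subspaces ${V_n \subset T_n(M_n)}$ of dimension ${\sim \log \dim T_n(M_n) \to \infty}$, thereby locally reducing to the Hilbert-space case, where the desired conclusion is immediate via orthogonal complementation. The difficulty, however, is that pulling $V_n$ back through $T_n$ to a subspace of $M_n$ does not in general preserve the almost-isometry property; one would additionally need $M_n$ itself to admit a well-matched Euclidean section of comparable dimension and then align the two sections through $T_n$. Combining Dvoretzky's theorem applied to both $M_n$ and $X/N_n$ with a careful matching argument appears to be the most promising route, but it seems to require essentially new geometric input concerning the joint structure of pairs of finite-dimensional subspaces in $X$\,--\,which is precisely why, as the author notes, this statement currently stands as an open question.
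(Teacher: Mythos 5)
You have correctly recognized what the paper itself is careful to say: this statement is \emph{not} proved in the paper but is posed there as an open question. The author writes that the preceding strong small strictly singular adjustment theorem ``may be established if the following statement is true which currently appears to us to be an open question,'' and then immediately records (crediting M.~I.~Ostrovskii) that the statement may not even be true as written, and that a weaker version with the constant $1$ replaced by $\tfrac{1}{2}$ (so the conclusion becomes ${dist(x, N_{n}) \geq \tfrac{1}{2} - \varepsilon_{n}}$) might be reachable via the geometry of Banach spaces in the style of Dvoretzky, V.~D.~Milman, and Pe\l czy\'nski. So there is no ``paper's own proof'' to compare against, and your proposal, which does not claim to close the gap either, is entirely consistent with the paper's own assessment.

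Your analysis of \emph{why} the naive approach stalls is sound and matches the spirit of the paper's remark. The quotient-map reformulation (working with ${T_{n} = q_{n}\mid_{M_{n}}}$, ${\|T_{n}\| \leq 1}$, ${\dim T_{n}(M_{n}) \to \infty}$) is a clean and natural framing. The Riesz-lemma iteration does produce a nested system of unit vectors ${x_{1}, \ldots, x_{k_{n}}}$ each individually far from ${N_{n} + sp\{x_{1},\ldots,x_{j-1}\}}$, and in a \emph{Hilbert} space one would then project orthogonally and conclude. In a general Banach space, however, the obstacle you name is precisely the right one: the constructed system need not have a bounded basis constant, so a normalized linear combination ${\sum_{i} \alpha_{i} x_{i}}$ can drift close to ${N_{n}}$. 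Indeed, the paper's own Lemma~\ref{253} performs a structurally similar iterated Hahn--Banach/projection construction and sees the coefficient bounds blow up like ${2^{k}}$; that blow-up is tamed there only because the gap ${\delta(M_{n}, N_{n})}$ is assumed to vanish rapidly, a hypothesis unavailable here. Your observation that a Dvoretzky-style route would need a ``matching'' of nearly Euclidean sections of ${M_{n}}$ and ${X/N_{n}}$ through ${T_{n}}$, which is not automatic, is also the right caution; it is essentially what the paper gestures at when saying the weaker ${\tfrac{1}{2}}$-constant variant is the realistic target. In short, your proposal is an honest and accurate assessment of the difficulty rather than a proof, and that is exactly how the paper treats the statement as well.
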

M. I. Ostrovskii had mentioned in a private correspondence that this theorem may be related to the known geometrical results from A. Dvoretzky \cite{dvoretzky}, V.D. Milman \cite{milman0} and A. Pe\l czy\'nski \cite{pelczynski2} and that while the above theorem might not end up being true, its weaker version might be proved if the constant $1$ from the estimate is replaced with the constant ${\frac{1}{2}}$ so that it becomes ${dist (x, N_{n}) \geq \frac{1}{2} - \varepsilon_{n}}$.
%
%
%
%
%
%
%
%
%
%
%

\addtocontents{toc}{\protect\vspace*{\baselineskip}}




\appendix


\end{document}